\DeclareMathOperator{\Mf}{\boldsymbol{\mathscr{M}}}
\DeclareMathOperator{\Pp}{\mathcal{P}}
\DeclareMathOperator{\Ff}{\mathcal{F}}
\DeclareMathOperator{\ww}{\mathbf{w}}
\DeclareMathOperator{\bM}{\mathbf{M}}
\DeclareMathOperator{\bA}{\mathbf{A}}
\DeclareMathOperator{\b1}{\mathbf{1}}
\DeclareMathOperator*{\argmax}{argmax}
\newcommand{\ba}{\boldsymbol a}
\newcommand{\bx}{\boldsymbol x}
\newcommand{\bb}{\boldsymbol b} 
\newcommand{\tr}{\mbox{tr}} 
\newcommand{\btheta}{\boldsymbol \theta} 
\newcommand{\bchi}{\boldsymbol \chi} 
\newtheorem{theorem}{Theorem}
\newtheorem{corollary}{Corollary}
\newtheorem{lemma}{Lemma}
\newtheorem{definition}{Definition}
\newcommand{\diag}{\text{ diag} }
\newcommand{\EX}{\mathbb E}
\title{Near-Efficient and Non-Asymptotic Multiway Inference}
\author{
Oscar L\'opez$^1$, Arvind Prasadan$^2$, Carlos~Llosa-Vite$^2$, \\Richard~B.~Lehoucq$^2$, Daniel~M.~Dunlavy$^2$
}
\date{%
    $^1$\textit{Harbor Branch Oceanographic Institute}, Florida Atlantic University, Ft Pierce, FL\\%
    $^2$\textit{Sandia National Laboratories}, Albuquerque, NM and Livermore, CA\\[2ex]%
    %\today
}
\begin{document}

\maketitle
\begin{abstract} 
We establish non-asymptotic efficiency guarantees for tensor decomposition--based inference in count data models. Under a Poisson framework, we consider two related goals: (i) \emph{parametric inference}, the estimation of the full distributional parameter tensor, and (ii) \emph{multiway analysis}, the recovery of its canonical polyadic (CP) decomposition factors. Our main result shows that in the rank-one setting, a rank-constrained maximum-likelihood estimator achieves multiway analysis with variance matching the Cram\'{e}r--Rao Lower Bound (CRLB) up to absolute constants and logarithmic factors. This provides a general framework for studying ``near-efficient'' multiway estimators in finite-sample settings. For higher ranks, we illustrate that our multiway estimator may not attain the CRLB; nevertheless, CP-based parametric inference remains nearly minimax optimal, with error bounds that improve on prior work by offering more favorable dependence on the CP rank. Numerical experiments corroborate near-efficiency in the rank-one case and highlight the efficiency gap in higher-rank scenarios.
\end{abstract}

\begin{figure}[b!]
        \centering       \includegraphics[width=.9\textwidth]{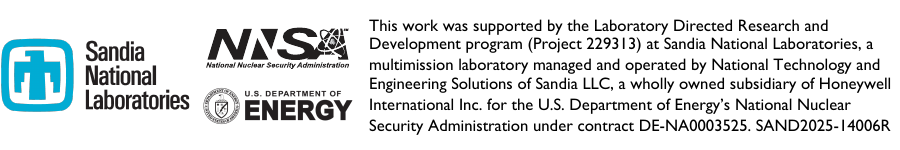}
\end{figure}

\section{Introduction}

Parametric inference and multiway analysis play crucial roles in high-dimensional data analysis. Both perspectives are useful in practice: parametric inference estimates the tensor of distributional parameters as a whole, while multiway analysis yields its latent factors for interpretation \cite{multiwaybook}. Both tasks rely fundamentally on tensor decompositions to represent and exploit underlying structure. However, computing tensor decompositions is notoriously difficult. Degeneracy phenomena lead to non-unique or ill-conditioned factorizations \cite{tensorsuvey} and many tensor problems are NP-hard \cite{NP}, making even approximate computation intractable in general. These issues put into question the reliability of existing tensor-based inference methods. They are particularly pronounced for the canonical polyadic (CP) decomposition \cite{tensorsuvey}, which, despite its widespread use, lacks the theoretical guarantees enjoyed by other tensor formats. Computing CP factors, i.e., multiway analysis, with minimal variance across multiple sets of observations would enhance the reliability of multiway analysis and parametric inference, offering practitioners more confidence in their results while reducing the need for extensive data collection.

To study the reliability of general inference procedures, the Cram\'{e}r--Rao and minimax lower bounds provide fundamental benchmarks for variance-optimal estimators. In Cram\'{e}r--Rao lower bound (CRLB) based analysis \cite{CRLBbook}, the variance of an estimator is bounded from below by a term involving the target parameter's inverse Fisher information matrix. Estimators that achieve the CRLB are known as \emph{efficient} estimators and are particularly valuable because they guarantee optimal performance, such as maximizing prediction stability across experiments \cite{app1,app2,app3,app4}. Similarly, minimax lower bounds capture the best achievable performance of any estimation method for worst-case parameter choices. Notably, minimax analysis establishes worst-case variance guarantees over a class of parameters and all estimators, whereas the CRLB characterizes the smallest achievable variance at a specific parameter value for a given model. Thus, CRLB and minimax analyses offer complementary notions of optimality in structured statistical inference—one local and estimator-specific, the other global and distribution-agnostic—each with its own advantages and limitations.

However, deriving CRLB- and minimax-based guarantees for tensor methods presents significant challenges. On the CRLB side, modern estimators are often defined through constrained or nonconvex optimization procedures, which rarely yield the closed-form expressions required in classical efficiency analysis. Moreover, traditional CRLB results are typically asymptotic—they assume access to infinitely many independent measurements—whereas contemporary data science operates in the finite-sample regime, where only a limited number of observations are available and computational constraints matter. This motivates reformulating efficiency concepts to be meaningful at realistic sample sizes.

On the minimax side, informative lower bounds have been established for tensor estimation problems, and much of the current effort is devoted to matching these bounds with sharp upper bounds. This requires developing new analytical tools that yield sample-complexity guarantees with optimal—i.e., linear—dependence on the tensor rank. However, obtaining such results remains challenging, in large part because it depends on unresolved structural questions surrounding the CP decomposition. Unlike other tensor formats, CP imposes minimal structural assumptions, which makes it widely applicable but also more difficult to analyze rigorously in terms of identifiability and sample complexity.

In this work, we introduce a flexible framework for CRLB-based assessments, establishing several non-asymptotic guarantees for multiway analysis. Focusing on count data modeled by a Poisson distribution, we consider the setting where the array of Poisson rate parameters admits a rank-one representation. In this regime, we show that a rank-one maximum likelihood estimator of the decomposition factors attains an estimation error that matches the CRLB up to absolute constants and logarithmic factors. This result demonstrates that rank-one factor estimators can be \emph{near-efficient}—approaching the theoretical lower variance bound. We propose a relaxed interpretation of the CRLB that is more appropriate for finite-sample settings and enables effective benchmarking of modern estimation methods.

In contrast, the general rank case exhibits significantly different behavior. We argue that estimators for higher-rank CP decompositions generally fail to attain the CRLB as in the rank-one setting. Through numerical experiments, we examine the ``CRLB gap'' and demonstrate that our factor estimation method does not achieve the CRLB. These results reflect a well-known issue of degeneracy in tensor decompositions \cite{tensorsuvey}. Nevertheless, when estimating the full Poisson parameter tensor, we show that CP-based parametric inference remains nearly minimax optimal, up to a rank-dependent factor and a logarithmic term. Our results improve upon previous bounds by providing sharper characterizations of CP rank dependence, thereby reinforcing the utility of decomposition-based inference as a reliable approach for estimating structured distributional parameters.

The remainder of the paper is organized as follows: Subsection \ref{discussion} discusses related work in the literature. In Section \ref{mainsec}, Subsections \ref{CRLBsec} and \ref{minimaxsec} provide background on the CRLB and minimax lower bounds, including a result that we will use to simplify CRLB-based analysis. Afterwards, Section \ref{Manal} will discuss our multiway inference scenario and present a simplified version of our main results along with discussion in Section \ref{maindiscuss}. Numerical experiments are presented in Section \ref{sec:experiments} that explore our theoretical findings. Sections \ref{FIMsec}-\ref{auxlemmas} provide expanded versions of the simplified results presented in Section \ref{mainsec}, along with required lemmas, and proofs. Concluding remarks and future work are discussed in Section 
\ref{conclusion}.

\subsection{Related Work}
\label{discussion}

Our CRLB analysis is motivated by a non-asymptotic perspective, in which efficiency guarantees are established using a finite number of observations. Classical CRLB results—fundamental to statistical inference—are typically asymptotic, assuming the number of samples grows without bound \cite{Akahira1981,279617}. This approach has been especially prevalent in multiway inference, where efficiency is often justified by letting dimension sizes or the number of measurements tend to infinity. However, many applications in modern data science operate in finite-sample regimes, where the number of observations is limited relative to the model dimension. In such settings, estimators may not attain the CRLB \cite{CRLBgap}, e.g., when Fisher information is concentrated in fine-scale model features that cannot be detected from a small number of samples \cite{NEURIPS2022}. These observations do not diminish the value of the classical CRLB; rather, they highlight the need to adapt its principles to finite-sample scenarios. Our work contributes toward this goal by developing a non-asymptotic framework for studying efficiency that preserves the spirit of CRLB analysis while making it applicable to modern high-dimensional tensor models.

Beyond adapting the CRLB to finite-sample settings, a growing body of work has examined statistical inference directly from a non-asymptotic perspective. In \cite{NEURIPS2022}, the authors propose a framework for extending CRLB-style efficiency guarantees to finite samples and show, through concrete examples, that no estimator can achieve the CRLB in general. They address this by smoothing the underlying density and introducing a relaxed notion of efficiency that is attainable with finitely many observations. Other authors consider the accuracy of estimators under finite samples, but not in the context of the CRLB. In \cite{MIAO20101305}, under smoothness conditions on the log-likelihood function, the authors show a concentration inequality of the maximum likelihood estimator from its expectation. The authors in \cite{9719860} consider finite-sample complexity needed to estimate a distribution's mean to a certain accuracy and probability of success. They present an estimator that works for any distribution and matches the optimal sample complexity achieved in the Gaussian case. The work in \cite{27fba59e-c294-3d55-b213-446841721797} is very similar to \cite{9719860}, they show that a wide class of distributions allow for mean estimators that are optimal (near-Gaussian) under finite samples. The work in \cite{Spokoiny2011ParametricEF,Catoni2012} derive concentration properties of maximum likelihood and $M$-estimators, under finite samples and misspecified parametric assumptions.

\section{Main Results}
\label{mainsec}

In this section we present simplified versions of our theoretical results. We begin with background on the CRLB and minimax lower bounds in Sections \ref{CRLBsec} and  \ref{minimaxsec}. Afterwards, Section \ref{Manal} presents our tensor decomposition-based inference problem and results.

\subsection{Cram\'{e}r--Rao Lower Bound}
\label{CRLBsec}

This section presents a multivariate version of the CRLB, the definition of an efficient estimator, and a result that simplifies CRLB-based analysis. To elaborate, suppose $\bx\in\mathbb{R}^n$ is a random vector containing the sample where $f(\bx;\btheta)$ denotes its probability density function (PDF) with $\btheta\in\mathbb{R}^p$ encompassing the distributional parameters we wish to estimate. A simplified version of our CRLB is the following:
\begin{theorem}
\label{CRLBsimp}
Consider an unbiased estimator $\hat\btheta(\bx)$ of $\btheta$ (i.e., $\EX\hat\btheta=\btheta$), where the PDF satisfies the following interchanging of integral and derivative
\begin{equation*}
    \dfrac{\partial}{\partial \btheta} \int_{\mathbb{R}^n} [\hat\btheta(\bchi) - \btheta] f(\bchi;\btheta) d\bchi 
=
\int_{\mathbb{R}^n}\dfrac{\partial}{\partial \btheta}  \left([\hat\btheta(\bchi) - \btheta] f(\bchi;\btheta)\right) d\bchi .
\end{equation*}
Define the Fisher information matrix (FIM) as
$$
\mathcal{I}_{\btheta} \coloneqq \EX\bigg[\left(\frac{\partial}{\partial \btheta}\log f(\bx;\btheta)\right)\left(\frac{\partial}{\partial \btheta}\log f(\bx;\btheta)\right)^{\top}\bigg].
$$
Then
$$
\mathcal{I}_{\btheta}^{\dagger}\preceq \emph{Var}\left(\hat\btheta\right),
$$
where 
$$
\emph{Var}\left(\hat\btheta\right) \coloneqq \EX\Big[\left(\hat\btheta - \btheta\right)\left(\hat\btheta - \btheta\right)^{\top}\Big]
$$ 
is the covariance matrix, $A\preceq B$ means that $B-A$ is positive semi-definite (PSD), and the $\dagger$ superscript denotes the pseudo-inverse.
\end{theorem}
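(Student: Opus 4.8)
The plan is to reduce the matrix inequality to a scalar Cauchy--Schwarz estimate applied to one-dimensional projections of the estimation error and of the score, and then to optimize the projection on the score side so that the pseudo-inverse emerges naturally rather than through a Schur complement.

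First I would extract a cross-covariance identity from the hypotheses. Writing the score $s(\bx) := \frac{\partial}{\partial\btheta}\log f(\bx;\btheta)$ and using $\frac{\partial}{\partial\btheta} f = f\,s$, differentiation of the normalization $\int_{\mathbb{R}^n} f(\bchi;\btheta)\,d\bchi = 1$ gives $\EX[s] = 0$, so that $\mathcal{I}_{\btheta} = \EX[s s^\top]$ is genuinely the covariance of the score. Next I would differentiate the unbiasedness identity $\int_{\mathbb{R}^n}[\hat\btheta(\bchi)-\btheta] f(\bchi;\btheta)\,d\bchi = 0$ with respect to $\btheta$. Invoking the interchange hypothesis to move the derivative inside, the Jacobian of $\hat\btheta(\bchi)-\btheta$ in $\btheta$ contributes $-I_p$ (since $\hat\btheta$ depends only on the data), while the derivative hitting $f$ contributes $[\hat\btheta-\btheta]\,s^\top f$. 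Integrating and using $\int f = 1$ yields the key identity
\[
\EX\!\big[(\hat\btheta-\btheta)\,s^\top\big] = I_p .
\]

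With this identity in hand, I would fix an arbitrary $a\in\mathbb{R}^p$ and consider the scalar random variables $U := a^\top(\hat\btheta-\btheta)$ and $V := b^\top s$ for a second vector $b$ to be chosen. Cauchy--Schwarz gives $(\EX[UV])^2 \le \EX[U^2]\,\EX[V^2]$, where the identity above evaluates $\EX[UV] = a^\top b$, while $\EX[U^2] = a^\top \mathrm{Var}(\hat\btheta)\,a$ and $\EX[V^2] = b^\top \mathcal{I}_{\btheta}\,b$. Choosing $b = \mathcal{I}_{\btheta}^{\dagger} a$ and using the Moore--Penrose relation $\mathcal{I}_{\btheta}^{\dagger}\mathcal{I}_{\btheta}\mathcal{I}_{\btheta}^{\dagger} = \mathcal{I}_{\btheta}^{\dagger}$ collapses both $a^\top b$ and $b^\top \mathcal{I}_{\btheta} b$ to $a^\top \mathcal{I}_{\btheta}^{\dagger} a$, so the bound becomes $(a^\top \mathcal{I}_{\btheta}^{\dagger} a)^2 \le (a^\top \mathrm{Var}(\hat\btheta)\,a)(a^\top \mathcal{I}_{\btheta}^{\dagger} a)$.

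To finish, I would dispatch two cases. When $a^\top \mathcal{I}_{\btheta}^{\dagger} a > 0$, dividing through gives $a^\top \mathcal{I}_{\btheta}^{\dagger} a \le a^\top \mathrm{Var}(\hat\btheta)\,a$; when $a^\top \mathcal{I}_{\btheta}^{\dagger} a = 0$, the same inequality holds trivially since $\mathrm{Var}(\hat\btheta)\succeq 0$. As $a$ was arbitrary, this establishes $\mathcal{I}_{\btheta}^{\dagger}\preceq \mathrm{Var}(\hat\btheta)$. I expect the main obstacle to be the correct handling of the pseudo-inverse: in the classical nonsingular case one simply takes a Schur complement of the joint Gram matrix $\bigl(\begin{smallmatrix}\mathrm{Var}(\hat\btheta) & I_p\\ I_p & \mathcal{I}_{\btheta}\end{smallmatrix}\bigr)\succeq 0$, but when $\mathcal{I}_{\btheta}$ is singular the generalized Schur complement requires a range condition $\mathrm{range}(I_p)\subseteq\mathrm{range}(\mathcal{I}_{\btheta})$ that need not be assumed. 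The projection argument with the choice $b=\mathcal{I}_{\btheta}^{\dagger}a$ sidesteps this entirely, and the only genuine care needed is in the degenerate directions where $a^\top \mathcal{I}_{\btheta}^{\dagger} a = 0$, which the case split resolves.
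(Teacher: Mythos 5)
Your proof is correct, and it takes a genuinely different route from the paper's. Both arguments hinge on the same cross-moment identity $\EX\big[(\hat\btheta-\btheta)s^\top\big]=I_p$ for the score $s=\frac{\partial}{\partial\btheta}\log f(\bx;\btheta)$, obtained exactly as you obtain it: differentiate the unbiasedness relation under the integral, with the Jacobian of $\hat\btheta(\bchi)-\btheta$ contributing $-I_p$ since $\hat\btheta$ depends only on the data. The divergence is in how the pseudo-inverse inequality is then extracted. The paper first proves a more general biased-estimator bound (Theorem \ref{CRLB}) by invoking, as a black box, a matrix-valued Cauchy--Schwarz inequality with generalized inverses (Theorem \ref{tripathy}, from \cite{tripathy2}), namely $\EX\ba\bb^{\top}(\EX\bb\bb^{\top})^{\dagger}\EX\bb\ba^{\top}\preceq\EX\ba\ba^{\top}$ with $\ba=\hat\btheta-\EX\hat\btheta$ and $\bb=s$, and the unbiased case follows by specializing the cross term to the identity. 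You instead scalarize: ordinary Cauchy--Schwarz on $U=a^\top(\hat\btheta-\btheta)$ and $V=b^\top s$, with the optimized choice $b=\mathcal{I}_{\btheta}^{\dagger}a$, the Moore--Penrose identity $\mathcal{I}_{\btheta}^{\dagger}\mathcal{I}_{\btheta}\mathcal{I}_{\btheta}^{\dagger}=\mathcal{I}_{\btheta}^{\dagger}$, and a case split on $a^\top\mathcal{I}_{\btheta}^{\dagger}a=0$. In effect you re-prove from scratch precisely the instance of the matrix inequality that is needed, which makes your argument elementary and self-contained, whereas the paper's modular route buys the biased-case generalization for free. Your caution about the range condition in a generalized Schur-complement argument is well placed, and the projection trick does sidestep it; as a curiosity, under your key identity the degenerate direction cannot actually arise, since $\mathcal{I}_{\btheta}v=0$ forces $v^\top s=0$ almost surely and hence $v=I_p v=\EX\big[(\hat\btheta-\btheta)s^\top\big]v=0$, so the hypotheses already imply $\mathcal{I}_{\btheta}$ is nonsingular---though your case split handles this cleanly without needing the observation. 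One dispensable aside: your claim $\EX[s]=0$ rests on interchanging derivative and integral in $\int f\,d\bchi=1$, which is not literally among the stated hypotheses; fortunately nothing in your argument uses it, because $\mathcal{I}_{\btheta}$ is defined directly as $\EX[ss^\top]$ rather than as a centered covariance.
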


Theorem \ref{CRLBsimp} follows from a more general result that additionally considers biased estimators. For ease of exposition, we focus on this CRLB for now but refer the reader to Section \ref{CRLBproof} for the general result and its proof. This fundamental result provides a manner to gauge the variability of an estimator. The approach considers a PSD-based hierarchy and labels an estimator as \emph{efficient} if it achieves the ``lowest'' possible covariance:

\begin{definition} \label{eff-est}
Under the conditions of Theorem \ref{CRLBsimp}, $\hat\btheta$ is said to be an \textbf{efficient} estimator if
\begin{equation}
\label{efficient}
    \mathcal{I}_{\btheta}^{\dagger}= \emph{Var}\left(\hat\btheta\right).
\end{equation}
\end{definition}

Establishing \eqref{efficient} for an estimator is a major concern in statistical inference, to assure an estimation procedure is optimal in the sense of error variance. This  is a challenging task in multivariate problems where computations of the FIM and its pseudo-inverse (FIM-PI) are cumbersome. Furthermore, many modern inference procedures do not provide a straightforward manner to compute an estimator in closed form, which makes it difficult to compute the covariance matrix. Finally, as noted in Section \ref{discussion}, \eqref{efficient} may not hold in non-asymptotic scenarios \cite{NEURIPS2022,CRLBgap}. This last observation stresses that CRLB-based analysis may not be adequate for finite-sample settings, which are most informative in practice.

To alleviate non-asymptotic CRLB-based analysis, our following result simplifies the establishment of estimator efficiency:
\begin{theorem}
\label{CRLBdist}
Under the conditions of Theorem \ref{CRLBsimp}, $\hat\btheta$ is efficient if and only if
\begin{equation}
\label{dist}
\emph{tr}\left(\mathcal{I}_{\btheta}^{\dagger}\right) = \mathbb{E}\big\|\hat\btheta-\btheta\big\|_2^2,
\end{equation}
where \emph{tr}$(\circ)$ is the matrix trace.
\end{theorem}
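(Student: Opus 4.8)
The plan is to reduce the matrix-valued efficiency condition \eqref{efficient} to the scalar trace condition \eqref{dist} by exploiting the positive semi-definite ordering already supplied by the CRLB. First I would rewrite the right-hand side of \eqref{dist} as a trace. Since $\|\hat\btheta-\btheta\|_2^2 = (\hat\btheta-\btheta)^\top(\hat\btheta-\btheta) = \tr\!\big((\hat\btheta-\btheta)(\hat\btheta-\btheta)^\top\big)$, linearity of expectation together with the cyclic/linear property of the trace gives
\begin{equation*}
\EX\big\|\hat\btheta-\btheta\big\|_2^2 = \tr\!\Big(\EX\big[(\hat\btheta-\btheta)(\hat\btheta-\btheta)^\top\big]\Big) = \tr\!\big(\text{Var}(\hat\btheta)\big).
\end{equation*}
Consequently \eqref{dist} is exactly the statement $\tr(\mathcal{I}_{\btheta}^{\dagger}) = \tr(\text{Var}(\hat\btheta))$, i.e., the two matrices appearing in \eqref{efficient} have equal trace.

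The forward implication is then immediate: if $\hat\btheta$ is efficient, so that $\mathcal{I}_{\btheta}^{\dagger} = \text{Var}(\hat\btheta)$ as matrices, taking traces of both sides yields \eqref{dist}.

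For the converse I would introduce the difference matrix $D \coloneqq \text{Var}(\hat\btheta) - \mathcal{I}_{\btheta}^{\dagger}$. Theorem \ref{CRLBsimp} guarantees $\mathcal{I}_{\btheta}^{\dagger} \preceq \text{Var}(\hat\btheta)$, so $D$ is PSD and in particular symmetric with nonnegative eigenvalues. The assumed trace equality gives $\tr(D) = \tr(\text{Var}(\hat\btheta)) - \tr(\mathcal{I}_{\btheta}^{\dagger}) = 0$. Since the trace of a symmetric PSD matrix equals the sum of its nonnegative eigenvalues, $\tr(D) = 0$ forces every eigenvalue to vanish, whence $D = 0$ and therefore $\mathcal{I}_{\btheta}^{\dagger} = \text{Var}(\hat\btheta)$, which is \eqref{efficient}.

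The only substantive ingredient is the CRLB itself: without the PSD ordering $\mathcal{I}_{\btheta}^{\dagger} \preceq \text{Var}(\hat\btheta)$ from Theorem \ref{CRLBsimp}, equality of traces would carry no information about equality of the matrices. Given that ordering, the argument rests on the elementary spectral fact that a PSD matrix with zero trace is the zero matrix, so I do not anticipate any real obstacle. The main conceptual point is to recognize that \eqref{dist} collapses the full matrix comparison to a single scalar identity precisely because one side of the inequality is known a priori to dominate the other.
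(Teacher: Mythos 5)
Your proof is correct and follows essentially the same route as the paper: both rest on the PSD ordering $\mathcal{I}_{\btheta}^{\dagger}\preceq \mathrm{Var}(\hat\btheta)$ from Theorem \ref{CRLBsimp} together with the fact that a PSD matrix with zero trace must vanish. The only cosmetic difference is that the paper packages this fact as the operator-norm bound $\|A\|\leq \tr(A)$ for PSD $A$, whereas you argue via the nonnegative eigenvalues summing to zero---these are the same observation.
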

The proof is straightforward, we quickly present it before continuing.

\begin{proof}[Proof of Theorem \ref{CRLBdist}]
Since Theorem \ref{CRLBsimp} follows, we have that $\mbox{Var}\left(\hat\btheta\right)-\mathcal{I}_{\btheta}^{\dagger}$ is a PSD matrix. We now apply a well known trace lower bound for PSD matrices (e.g., see Section 2 in \cite{MCI}):
$$
\mbox{if} \ 0\preceq A, \ \mbox{then} \ \|A\| \leq \tr(A),
$$
where $\|\circ\|$ is the operator norm. Therefore
\[
\Big\| \mbox{Var}\left(\hat\btheta\right)-\mathcal{I}_{\btheta}^{\dagger}\Big\| \leq \tr\left( \mbox{Var}\left(\hat\btheta\right)-\mathcal{I}_{\btheta}^{\dagger}\right) =  \mathbb{E}\big\|\hat\btheta-\btheta\big\|_2^2 - \tr\left(\mathcal{I}_{\btheta}^{\dagger}\right),
\]
and the result follows.
\end{proof}

While simple, Theorem \ref{CRLBdist} seems to be unnoticed in the literature. This result severely simplifies the study of efficiency since we must now only consider scalar terms, the FIM-PI trace and the mean squared error $\mathbb{E}\|\hat\btheta-\btheta\|_2^2$ (MSE). The main advantage is that the MSE is well studied for outputs of optimization programs under finite-samples, even when closed form solutions are not available. We propose to use \eqref{dist} to gauge how close an estimator is to achieving the CRLB. Our main argument is that this approach alleviates non-asymptotic CRLB-based analysis. To this end, we introduce the more flexible concept of \emph{near-efficient} estimators:

\begin{definition} \label{neff-est}
Given a random observation $\textbf{x}\sim f(\btheta)$, let $\widetilde{\btheta}$ be an estimator of $\btheta$. We say that $\widetilde{\btheta}$ is a \textbf{near-efficient} estimator if the terms
\[
\emph{tr}\left(\mathcal{I}_{\btheta}^{\dagger}\right) \ \ \mbox{and} \ \ \emph{Var}\left(\widetilde{\btheta}\right)
\]
match up to absolute constants and logarithmic terms.
\end{definition}

Notice that Definition \ref{neff-est} does not require $\widetilde{\btheta}$ to be unbiased, which differs from the unbiasedness assumption in Theorem \ref{CRLBsimp}. Intuitively, demonstrating that an estimator is near-efficient under this definition shows that its variance remains comparable to that of the best unbiased estimator. Although this introduces a mild mismatch in assumptions, the concept remains highly informative in practice, as it enables CRLB-based assessments in realistic finite-sample settings where unbiasedness is rarely attainable. Our approach will consist of deriving error bounds to compare the MSE to the FIM-PI trace, to show that Definition \ref{neff-est} is satisfied. This will be particularly useful in multiway analysis, where only asymptotic results seem to be available in the literature. 

\subsection{Minimax Lower Bounds}
\label{minimaxsec}

We now provide background on minimax lower bounds and explain their relation to the CRLB framework discussed previously. Both approaches characterize fundamental limits of statistical estimation, but they differ in perspective and scope.

Minimax theory studies the best possible MSE that any estimator can achieve over a class of parameters $S$. Formally, one considers
\begin{equation}
    \inf_{\hat{\btheta}}
    \;\sup_{\btheta \in S}
    \mathbb{E}\ \|\hat{\btheta} - \btheta\|_2^2.
    \label{eq:minimax-risk}
\end{equation}
A lower bound on \eqref{eq:minimax-risk} implies that no estimator can uniformly attain smaller MSE over the entire class $S$. Such bounds typically depend on the sample size, the ambient dimension, constraints such as sparsity or tensor rank, or properties of the noise model. Importantly, minimax bounds are \emph{information-theoretic}: they apply to all estimators, not just a particular algorithm.

Both minimax theory and the CRLB quantify optimality, but in complementary ways:
\begin{itemize}
    \item The CRLB is a \emph{local} bound: it applies to a specific parameter $\btheta$ and typically requires computing the FIM and the covariance of a chosen estimator.
    \item Minimax bounds are \emph{global}: they measure the best achievable performance uniformly over a class $S$ and do not depend on any particular estimator.
    \item The CRLB yields a matrix inequality, whereas minimax bounds typically yield scalar lower bounds such as \eqref{eq:minimax-risk}.
    \item Classical CRLB results are primarily asymptotic, whereas minimax theory often provides non-asymptotic error rates.
\end{itemize}
Thus, while the CRLB characterizes the smallest variance achievable by an estimator at a fixed parameter, minimax bounds characterize the smallest possible worst-case error across all estimators and all parameters in $S$.

The choice between CRLB and minimax analysis depends on the application. If the goal is to evaluate a specific estimator and quantify how close it is to an information-theoretic limit at a fixed $\btheta$, the CRLB is the appropriate tool. However, CRLB analysis often requires detailed knowledge of the estimator and may become analytically intractable for large, structured models. Minimax bounds, in contrast, provide broad performance guarantees and are easier to generalize, although they may offer less detailed insight at specific parameter points. In this sense, CRLB and minimax approaches should be viewed as complementary rather than competing frameworks for understanding fundamental limits of structured statistical inference.

\subsection{Near-Efficient Multiway Inference}
\label{Manal}

We now describe our multiway inference problem, estimation method, and main results in the rank-one and general CP rank case. We consider $N$-way Poisson observations from an entry-wise independent parameter model:
\[
\boldsymbol{\mathscr{X}} \sim \mbox{Poisson}\left( \boldsymbol{\mathscr{M}} \right).
\]
Here, $\boldsymbol{\mathscr{X}}\in\mathbb{N}^{I\times\cdots\times I}$ and $\boldsymbol{\mathscr{M}}\in\mathbb{R}_+^{I\times\cdots\times I}$ are $N$-way tensors where $\mathbb{R}_+$ denotes the set of positive real numbers. The entry of $\boldsymbol{\mathscr{X}}$ at the multi-index $\textbf{i}$ is an independent random variable (denoted as $x_{\textbf{i}}$) such that for any $k\in\mathbb{N}$
\[
\mathbb{P}\left(x_{\textbf{i}} = k\right) = \frac{m_{\textbf{i}}^ke^{-m_{\textbf{i}}}}{k!},
\]
where $m_{\textbf{i}}$ denotes the respective entry of $\boldsymbol{\mathscr{M}}$. The array $\boldsymbol{\mathscr{X}}$ consists of observed count data and $\boldsymbol{\mathscr{M}}$ is the tensor of Poisson parameters we wish to estimate, either as an array (parametric inference) or in decomposed form (multiway analysis).

Before proceeding, it is useful to distinguish these two perspectives. Parametric inference refers to estimating the parameter tensor $\boldsymbol{\mathscr{M}}$, i.e., the $I \times \cdots \times I$ independent Poisson parameters, a well-defined statistical task for which minimax lower bounds provide sharp benchmarks. In contrast, multiway analysis aims to recover the canonical polyadic (CP) decomposition factors of $\boldsymbol{\mathscr{M}}$, which is typically much more challenging and may be ill-posed in larger-rank settings due to degeneracy and identifiability issues. Our results highlight this distinction: in the rank-one case, multiway analysis achieves near-efficiency relative to the CRLB, while in the general rank case, CP factor estimation falls short of the CRLB even though parametric inference of $\boldsymbol{\mathscr{M}}$ remains nearly minimax optimal. 

\subsubsection{Rank-one Case: Near-Efficient Multiway Analysis}
We first assume $\boldsymbol{\mathscr{M}}$ is a rank-one tensor
\[
\boldsymbol{\mathscr{M}} = \textbf{u}^{(1)}\circ\cdots \circ \textbf{u}^{(N)},
\]
where $\{\textbf{u}^{(n)}\}_{n=1}^{N}\subset\mathbb{R}^{I}$ are the factors and $\circ$ denotes the vector outer product. Entry-wise, this means that at the multi-index $\textbf{i} = (i_1,\cdots,i_N)$
\[
m_{\textbf{i}} = u_{i_1}^{(1)}u_{i_2}^{(2)}\cdots u_{i_N}^{(N)}
\]
where $u_{i_n}^{(n)}$ is the respective entry of $\textbf{u}^{(n)}$. We remark that most tensor decomposition choices agree in the rank-one case so the result of this section applies beyond the CP decomposition.

Given $\boldsymbol{\mathscr{X}}$, our goal is to estimate factors of the Poisson parameter tensor $\{\textbf{u}^{(n)}\}_{n=1}^{N}$, i.e., multiway analysis. Throughout, we denote $\boldsymbol{\theta}\in \mathbb{R}^{NI}$ as the concatenation of the $\textbf{u}^{(n)}$'s. We will produce an estimator of $\boldsymbol{\theta}$ whose variance roughly matches the respective CRLB. We will consider an estimator that exploits the rank-one Poisson model and that the factor entries lie in some interval $[\beta,\alpha]$ with $\beta>0$, by searching for estimators in
\begin{equation}
\label{S1}
    \mathcal{S}_1(\beta,\alpha) \coloneqq \Big\{ \boldsymbol{\mathscr{T}} = \textbf{w}^{(1)}\circ\cdots \circ \textbf{w}^{(N)} \ | \ \{\textbf{w}^{(n)}\}_{n=1}^{N} \subset [\beta,\alpha]^I \Big\}.
\end{equation}
Our approach will output an estimator of $\boldsymbol{\mathscr{M}}$ as
\begin{equation}
\label{maxlike}
    \widetilde{\boldsymbol{\mathscr{M}}}_1 \coloneqq \argmax_{\boldsymbol{\mathscr{T}}\in\mathcal{S}_1}\mathcal{L}_{\boldsymbol{\mathscr{X}}}(\boldsymbol{\mathscr{T}}) \coloneqq \argmax_{\boldsymbol{\mathscr{T}}\in\mathcal{S}_1} \sum_{\textbf{i}}(x_{\textbf{i}}+1)\log\left(t_{\textbf{i}} + 1\right) - (t_{\textbf{i}} + 1),
\end{equation}
where we write $\mathcal{S}_1(\beta,\alpha)$ as $\mathcal{S}_1$ for compactness. This method is very similar to standard Poisson max-likelihood \cite{PoissonMC,ZTP}, but differs slightly due the the ``$+1$'' terms that appear in the objective. We refer to this estimation method as \emph{shifted-Poisson} regression. The $+1$ shifts advantageously avoid zero counts in order to simplify our analysis. 

The estimator $\widetilde{\boldsymbol{\mathscr{M}}}_1$ is output in rank-one format, with factors $\{\widetilde{\textbf{u}}^{(n)}\}_{n=1}^{N}$ and we let $\widetilde{\btheta}$ denote the estimate factors concatenated as a vector. We now present a simplified theorem that summarizes our rank-one results. The following statement treats terms that do not involve the size of the ambient dimensions, $I$, as constants for ease of exposition.

\begin{theorem}[rank-one multiway analysis]
\label{simpthm}
    Let $\beta >0$ and assume that $\boldsymbol{\mathscr{M}}\in\mathcal{S}_1$. Under our rank-one Poisson observation setting, the FIM-PI satisfies
    \begin{equation*}
\emph{tr}\left(\mathcal{I}_{\btheta}^{\dagger}\right) = \mathcal{O}\left(\frac{1}{I^{N-2}}\right).
    \end{equation*}
    Furthermore, consider the factor estimation procedure based on \eqref{maxlike} with the factors of $\boldsymbol{\mathscr{M}}$ and $\widetilde{\boldsymbol{\mathscr{M}}}_1$ scaled such that $\|\textbf{u}^{(1)}\|_2 = \cdots =\|\textbf{u}^{(N)}\|_2$ and $\|\widetilde{\textbf{u}}^{(1)}\|_2 = \cdots =\|\widetilde{\textbf{u}}^{(N)}\|_2$. If $I$ is large enough (see \eqref{Isize}), the factor estimator obeys
    \begin{equation*}
    \mathbb{E}\big\|\widetilde{\btheta}-\btheta\big\|_2^2 = \mathcal{O}\left(\frac{\log^2(I)}{I^{N-2}}\right).
    \end{equation*}
\end{theorem}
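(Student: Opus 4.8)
The plan is to prove the two displays separately: first the exact block structure of the Fisher information matrix (FIM) together with a lower bound on its smallest nonzero eigenvalue, and then a basic-inequality analysis of the shifted-Poisson estimator combined with a local stability bound for rank-one decomposition. For the FIM, I would start from the rank-one Poisson log-likelihood and observe that, by entrywise independence and $\mathrm{Var}(x_\textbf{i}/m_\textbf{i}-1)=1/m_\textbf{i}$, the FIM admits the rank-one-sum representation $\mathcal{I}_\btheta=\sum_\textbf{i} m_\textbf{i}^{-1}\textbf{g}_\textbf{i}\textbf{g}_\textbf{i}^\top$, where $\textbf{g}_\textbf{i}=\partial m_\textbf{i}/\partial\btheta$ has block-$n$ component $(\prod_{k\neq n}u_{i_k}^{(k)})\,\textbf{e}_{i_n}$. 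Introducing the relative perturbations $r_j^{(n)}=v_j^{(n)}/u_j^{(n)}$ and the column sums $s_k=\sum_j u_j^{(k)}\asymp I$, a direct expansion yields the clean identity
$$\textbf{v}^\top\mathcal{I}_\btheta\textbf{v}=\sum_{n}a_n\widetilde Q_n+\sigma\Big(\sum_n p_n\Big)^2,\qquad a_n=\prod_{k\neq n}s_k,\ \ \sigma=\prod_k s_k,$$
where $p_n=(\sum_j u_j^{(n)}r_j^{(n)})/s_n$ is the $\textbf{u}^{(n)}$-weighted mean of $r^{(n)}$ and $\widetilde Q_n\ge 0$ is its weighted variance. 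This makes transparent that the null space is exactly the $(N-1)$-dimensional family of infinitesimal rescalings ($r^{(n)}$ constant with $\sum_n p_n=0$), matching the CP scaling ambiguity.

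The key step is to lower bound the smallest positive eigenvalue. Since $a_n\asymp I^{N-1}$ and $\sigma\asymp I^{N}$, and the factor entries lie in $[\beta,\alpha]$ so the $\textbf{v}$- and $r$-norms are equivalent up to constants, the fluctuation term controls every direction with nonzero $\widetilde Q_n$ at scale $I^{N-1}$, while the single residual constant direction is controlled by the $\sigma(\sum_n p_n)^2$ term. I would therefore establish $\lambda_{\min}^+(\mathcal{I}_\btheta)\gtrsim I^{N-1}$ and conclude $\mathrm{tr}(\mathcal{I}_\btheta^\dagger)\le (NI)/\lambda_{\min}^+\lesssim I/I^{N-1}=\mathcal{O}(I^{-(N-2)})$. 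The main obstacle here is ruling out, uniformly in $I$, near-null mixed directions — vectors orthogonal to the null space that combine a small fluctuation with an almost-scaling mean part; the norm-balancing constraint $\|\textbf{u}^{(1)}\|_2=\cdots=\|\textbf{u}^{(N)}\|_2$ is precisely what forces $\sum_n p_n\neq 0$ on the orthogonal complement of the null space and prevents the quotient from degenerating.

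For the factor MSE I would use the standard two-step route of parametric error followed by decomposition stability. Because $\widetilde{\boldsymbol{\mathscr{M}}}_1$ maximizes $\mathcal{L}_{\boldsymbol{\mathscr{X}}}$ over $\mathcal{S}_1\ni\boldsymbol{\mathscr{M}}$, the basic inequality $\mathcal{L}_{\boldsymbol{\mathscr{X}}}(\widetilde{\boldsymbol{\mathscr{M}}}_1)\ge\mathcal{L}_{\boldsymbol{\mathscr{X}}}(\boldsymbol{\mathscr{M}})$ splits into a population gap and a noise term. The population gap equals a sum of generalized-KL divergences $d(m_\textbf{i}+1,t_\textbf{i}+1)$ with $d(a,b)=a\log(a/b)-a+b$, which, since all shifted entries lie in $[\beta^N+1,\alpha^N+1]$, is locally strongly convex and bounded below by $c\|\widetilde{\boldsymbol{\mathscr{M}}}_1-\boldsymbol{\mathscr{M}}\|_F^2$; the noise term is $\langle\boldsymbol{\mathscr{X}}-\boldsymbol{\mathscr{M}},\,\boldsymbol\eta\rangle$ with $\eta_\textbf{i}=\log(\widetilde t_\textbf{i}+1)-\log(m_\textbf{i}+1)$ and $|\eta_\textbf{i}|\le C|\widetilde t_\textbf{i}-m_\textbf{i}|$. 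Dividing through by $\|\widetilde{\boldsymbol{\mathscr{M}}}_1-\boldsymbol{\mathscr{M}}\|_F$ reduces matters to controlling $\sup_{\boldsymbol{\mathscr{T}}\in\mathcal{S}_1}\langle\boldsymbol{\mathscr{X}}-\boldsymbol{\mathscr{M}},\boldsymbol\eta(\boldsymbol{\mathscr{T}})\rangle/\|\boldsymbol\eta(\boldsymbol{\mathscr{T}})\|_F$, a supremum of a centered-Poisson linear process over the $\mathcal{O}(NI)$-dimensional manifold $\mathcal{S}_1$. I expect this to be $\mathcal{O}(\sqrt{I}\,\log I)$ with high probability — the $\sqrt{I}$ from the effective dimension and the $\log I$ from a covering/Bernstein argument for the sub-exponential Poisson noise — which after squaring gives $\mathbb{E}\|\widetilde{\boldsymbol{\mathscr{M}}}_1-\boldsymbol{\mathscr{M}}\|_F^2=\mathcal{O}(I\log^2 I)$.

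Finally I would convert this parametric bound into a factor bound via a quantitative stability result for rank-one decomposition. The Jacobian Gram matrix $\sum_\textbf{i}\textbf{g}_\textbf{i}\textbf{g}_\textbf{i}^\top$ of the map $\btheta\mapsto\boldsymbol{\mathscr{M}}$ differs from $\mathcal{I}_\btheta$ only through the bounded weights $m_\textbf{i}^{-1}\in[\alpha^{-N},\beta^{-N}]$, so its smallest positive singular value squared is again $\asymp I^{N-1}$; since positivity of the factors eliminates sign and permutation ambiguity and norm-balancing eliminates scaling, for $I$ large enough (so that the linearization is valid on the relevant neighborhood) the inverse-function bound gives $\|\widetilde\btheta-\btheta\|_2^2\lesssim I^{-(N-1)}\|\widetilde{\boldsymbol{\mathscr{M}}}_1-\boldsymbol{\mathscr{M}}\|_F^2$. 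Combining the two displays yields $\mathbb{E}\|\widetilde\btheta-\btheta\|_2^2=\mathcal{O}(\log^2(I)/I^{N-2})$. The two principal obstacles are the uniform eigenvalue lower bound for the FIM and Jacobian (where norm-balancing is essential) and the uniform concentration of the noise process over the nonconvex manifold $\mathcal{S}_1$, which governs the logarithmic factors and dictates how large $I$ must be.
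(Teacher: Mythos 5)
Your three-part architecture (FIM trace bound, tensor-level error via a basic inequality, then conversion to factor error) mirrors the paper's proof, which combines Theorem \ref{trFIM} with Corollary \ref{ZTPboundfac}, and your first two parts are legitimate alternative routes. For the FIM, your quadratic-form identity is correct (it is consistent with the explicit block formula $\mathcal{I}_{\btheta}$ from \cite{PCPFIMpaper} used in the paper), and bounding $\mathrm{tr}(\mathcal{I}_{\btheta}^{\dagger})$ by $\mathrm{rank}/\lambda_{\min}^{+} \lesssim NI/I^{N-1}$ is a workable direct argument; the paper instead sandwiches $\mathcal{F}_{\alpha} \preceq \mathcal{I}_{\btheta} \preceq \mathcal{F}_{\beta}$, where both comparison matrices have fully explicit eigensystems, and invokes a pseudo-inverse monotonicity theorem whose equal-rank hypothesis it verifies --- this buys matching upper \emph{and} lower trace bounds, which matter for the near-efficiency claim, whereas your route gives only the upper bound stated here. (Note also that the paper computes the FIM under an equal-$\ell_1$ scaling of the factors, a representative choice your sketch does not pin down.) For the noise term, the paper uses the duality $\langle \boldsymbol{\mathscr{S}}_{\boldsymbol{\mathscr{X}}}(\boldsymbol{\mathscr{M}}), \boldsymbol{\mathscr{T}}-\boldsymbol{\mathscr{M}}\rangle \leq \|\boldsymbol{\mathscr{S}}_{\boldsymbol{\mathscr{X}}}(\boldsymbol{\mathscr{M}})\| \, \|\boldsymbol{\mathscr{T}}-\boldsymbol{\mathscr{M}}\|_*$ together with the nuclear-norm bound of Theorem \ref{rankRthm} and a cited spectral-norm concentration lemma plus a Poisson tail bound on the score entries; your covering/Bernstein process bound over the rank-one manifold is essentially equivalent machinery and yields the same $I\log^2 I$ tensor rate.

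The genuine gap is your third step. You convert $\|\widetilde{\boldsymbol{\mathscr{M}}}_1-\boldsymbol{\mathscr{M}}\|_F$ into $\|\widetilde{\btheta}-\btheta\|_2$ via an inverse-function/linearization bound, which is inherently \emph{local}: it is valid only when $\widetilde{\btheta}$ lies in a basin where the quadratic remainder of the map $\btheta \mapsto \boldsymbol{\mathscr{M}}$ is dominated by the Jacobian term. Quantitatively, $\sigma_{\min}$ of the Jacobian on the orthocomplement of the scaling directions is of order $I^{(N-1)/2}$ while the second-order remainder in Frobenius norm is of order $I^{(N-2)/2}\|\Delta\btheta\|_2^2$, so the linearization is trustworthy only when $\|\Delta\btheta\|_2 \ll \sqrt{I}$. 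But the a priori bound available from the constraint set $[\beta,\alpha]^I$ is $\|\Delta\btheta\|_2 \leq \sqrt{NI}(\alpha-\beta)$ --- the \emph{same} order in $I$ --- and nothing in your argument places the estimator inside the basin; taking ``$I$ large enough'' does not help because both scales grow together. The paper sidesteps locality entirely with Lemma \ref{factorupperbound}, a global algebraic inequality: for \emph{any} two norm-balanced rank-one tensors with factor entries in $[\beta,\infty)$, expanding $\|\boldsymbol{\mathscr{M}}-\widetilde{\boldsymbol{\mathscr{M}}}\|_F^2$ through the inner products $\langle \textbf{u}^{(k)},\tilde{\textbf{u}}^{(k)}\rangle \geq \beta^2 I$ (positivity is essential here) yields $\|\textbf{u}^{(n)}-\tilde{\textbf{u}}^{(n)}\|_2^2 \leq \|\boldsymbol{\mathscr{M}}-\widetilde{\boldsymbol{\mathscr{M}}}\|_F^2/(\beta^{2(N-1)}I^{N-1})$ with no closeness assumption whatsoever. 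To repair your proof you would either need such a global stability lemma or an additional argument forcing $\widetilde{\btheta}$ into the linearization basin. A secondary, smaller omission: you pass from a high-probability bound to $\mathbb{E}\|\widetilde{\btheta}-\btheta\|_2^2$ without justification; the paper does this via Lemma \ref{expbdlemma} using boundedness of the squared error on the feasible set, and that conversion is precisely where the sample-size condition \eqref{Isize} referenced in the theorem statement originates --- your sketch never produces that condition.
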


The proof combines Theorem \ref{trFIM} and Corollary \ref{ZTPboundfac} that provide explicit bounds for tr$\left(\mathcal{I}_{\btheta}^{\dagger}\right)$ and the MSE respectively. We refer the reader to Sections \ref{FIMsec} and \ref{MSEsec} for the full statements and proofs. Theorem \ref{simpthm} states that up to absolute constants and logarithmic terms, the trace of the FIM-PI and the MSE of our estimator match. According to Definition \ref{neff-est}, we have shown that our estimator is near-efficient. The result shows that our method is nearly as efficient as the best possible unbiased estimation method. However, as a corollary, we can establish asymptotic efficiency and unbiasedness.

\begin{corollary}
Under the setting of Theorem \ref{simpthm}, $\widetilde{\btheta}$ is asymptotically unbiased and efficient.
\end{corollary}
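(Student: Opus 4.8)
The plan is to recover the two classical properties by combining the non-asymptotic bounds of Theorem~\ref{simpthm} with the structure of the shifted-Poisson program \eqref{maxlike}. Asymptotic unbiasedness is immediate: by Jensen's inequality the squared bias is controlled by the mean squared error,
\[
\big\|\EX\widetilde{\btheta}-\btheta\big\|_2^2
=\big\|\EX(\widetilde{\btheta}-\btheta)\big\|_2^2
\le \EX\big\|\widetilde{\btheta}-\btheta\big\|_2^2
=\mathcal{O}\!\left(\frac{\log^2(I)}{I^{N-2}}\right),
\]
which vanishes as $I\to\infty$ for $N\ge 3$; the same bound furnishes consistency. It then remains to upgrade the near-efficiency of Theorem~\ref{simpthm} (a match only up to a $\log^2(I)$ factor, in the sense of Definition~\ref{neff-est}) to \emph{exact} efficiency in the limit.

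For efficiency I would treat $\widetilde{\btheta}$ as an $M$-estimator solving the shifted score equation $\sum_{\textbf{i}}\big(\tfrac{x_{\textbf{i}}+1}{t_{\textbf{i}}+1}-1\big)\nabla_{\btheta}t_{\textbf{i}}=\mathbf 0$. The crucial first fact is that this score is \emph{Fisher consistent} at the truth: since $\EX x_{\textbf{i}}=m_{\textbf{i}}$ we have $\EX\big[\tfrac{x_{\textbf{i}}+1}{m_{\textbf{i}}+1}-1\big]=0$, so the ``$+1$'' shifts do not displace the root of the population estimating equation. Classical $M$-estimation asymptotics then deliver a limiting Gaussian with sandwich covariance $A^{\dagger}BA^{\dagger}$, where, writing $g_{\textbf{i}}=\nabla_{\btheta}m_{\textbf{i}}$,
\[
A=\sum_{\textbf{i}}\frac{1}{m_{\textbf{i}}+1}\,g_{\textbf{i}}g_{\textbf{i}}^{\top},
\quad
B=\sum_{\textbf{i}}\frac{m_{\textbf{i}}}{(m_{\textbf{i}}+1)^2}\,g_{\textbf{i}}g_{\textbf{i}}^{\top},
\quad
\mathcal{I}_{\btheta}=\sum_{\textbf{i}}\frac{1}{m_{\textbf{i}}}\,g_{\textbf{i}}g_{\textbf{i}}^{\top}
\]
are the mean negative Hessian, the score covariance, and the true FIM, respectively. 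Because the shift only reweights the entrywise contributions, replacing $1/m_{\textbf{i}}$ by $1/(m_{\textbf{i}}+1)$ in $A$ and by $m_{\textbf{i}}/(m_{\textbf{i}}+1)^2$ in $B$, the sandwich overshoots $\mathcal{I}_{\btheta}^{\dagger}$ at finite $I$ but converges to it once the accumulated information dominates the shift, at which point $A,B\to\mathcal{I}_{\btheta}$ after normalization and hence $A^{\dagger}BA^{\dagger}\to\mathcal{I}_{\btheta}^{\dagger}$. Efficiency is then cleanest to certify through the scalar criterion of Theorem~\ref{CRLBdist}: it suffices to show $\EX\|\widetilde{\btheta}-\btheta\|_2^2\to\mathrm{tr}(\mathcal{I}_{\btheta}^{\dagger})$, i.e.\ that the limiting MSE meets the FIM-PI trace, which is precisely the vanishing of the $\log^2(I)$ gap in the limit.

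The hard part is making this limit rigorous. The three weightings genuinely differ at finite $I$, so $A^{\dagger}BA^{\dagger}\neq\mathcal{I}_{\btheta}^{\dagger}$ in general; this discrepancy is exactly the source of the extra $\log^2(I)$ factor in Theorem~\ref{simpthm}, and the argument must quantify that $\big|1/(m_{\textbf{i}}+1)-1/m_{\textbf{i}}\big|$ and $\big|m_{\textbf{i}}/(m_{\textbf{i}}+1)^2-1/m_{\textbf{i}}\big|$ become negligible against the total Fisher information in the asymptotic regime. A second subtlety is that $\mathcal{I}_{\btheta}$ is singular because the rank-one factors are identifiable only up to scaling; the normalization $\|\textbf{u}^{(1)}\|_2=\cdots=\|\textbf{u}^{(N)}\|_2$ fixes the estimand, and one must verify that the pseudo-inverse represents the covariance on the corresponding identifiable tangent subspace rather than on the unidentifiable scaling orbit. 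The remaining $M$-estimation hypotheses, namely domination and stochastic equicontinuity of the score together with strict concavity of \eqref{maxlike} near the truth, are comparatively routine given the uniform bounds $m_{\textbf{i}}\in[\beta^N,\alpha^N]$ that follow from $\boldsymbol{\mathscr{M}}\in\mathcal{S}_1(\beta,\alpha)$.
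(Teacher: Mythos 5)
Your unbiasedness step is fine and is essentially the paper's: squared bias is dominated by the MSE via Jensen, and the MSE bound of Theorem~\ref{simpthm} sends it to zero (the paper also lets $N\to\infty$, which your argument covers identically since $I^{N-2}$ grows). The gap is in your efficiency step. The paper does not run any distributional or $M$-estimation analysis at all: it simply observes that \emph{both} scalars in Theorem~\ref{CRLBdist} --- $\tr\big(\mathcal{I}_{\btheta}^{\dagger}\big)$ from Theorem~\ref{trFIM} and $\EX\|\widetilde{\btheta}-\btheta\|_2^2$ from Corollary~\ref{ZTPboundfac} --- tend to zero, so their difference vanishes and the trace criterion is met in the limit. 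Your sandwich route, by contrast, rests on a claim that is false in this regime: you assert that $A,B\to\mathcal{I}_{\btheta}$ ``once the accumulated information dominates the shift,'' but here the rates $m_{\textbf{i}}$ lie in the fixed interval $[\beta^N,\alpha^N]$ and do not grow with $I$. Information accumulates only through the number of entries, and every entry carries the same $O(1)$ weight distortion ($1/(m_{\textbf{i}}+1)$ versus $1/m_{\textbf{i}}$, and $m_{\textbf{i}}/(m_{\textbf{i}}+1)^2$ versus $1/m_{\textbf{i}}$), so $A^{\dagger}BA^{\dagger}$ and $\mathcal{I}_{\btheta}^{\dagger}$ differ by constant factors for \emph{all} $I$; the convergence you invoke would hold under large-rate asymptotics $m_{\textbf{i}}\to\infty$, not the large-dimension asymptotics of Theorem~\ref{simpthm}. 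You acknowledge the limit is unproven (``the hard part is making this limit rigorous''), which is precisely where the proposal fails as a proof.

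Your closing fallback --- certify efficiency through the scalar criterion of Theorem~\ref{CRLBdist} by showing $\EX\|\widetilde{\btheta}-\btheta\|_2^2$ meets $\tr\big(\mathcal{I}_{\btheta}^{\dagger}\big)$ in the limit --- is in fact the paper's entire argument, but your gloss that this is ``the vanishing of the $\log^2(I)$ gap'' conflates additive and multiplicative matching. Under Theorem~\ref{simpthm}'s bounds the \emph{ratio} of the two quantities may remain of order $\log^2(I)$ for all $I$; what vanishes is their \emph{difference}, simply because both are $o(1)$. That additive matching is all the paper's (admittedly weak) notion of asymptotic efficiency uses, and it requires none of the $M$-estimation machinery. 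If you strip your proposal down to Jensen plus the observation that both sides of \eqref{dist} tend to zero, you recover the paper's proof; as written, the sandwich argument is both incorrect in its key limit claim and unnecessary.
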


The corollary follows by allowing $I$ or $N$ to be arbitrarily large, which shows that both tr$\left(\mathcal{I}_{\btheta}^{\dagger}\right)$ and $\mathbb{E}\big\|\widetilde{\btheta}-\btheta\big\|_2^2$ approach zero. In the limit, we must have $\mathbb{E}\widetilde{\btheta} = \btheta$ and both terms match so that Theorem \ref{CRLBdist} is satisfied. This corollary demonstrates the flexibility of our approach, which further allows us to establish asymptotic results.

\subsubsection{Larger Rank Case: Near-Minimax Optimal Parametric Inference}
We further consider the general case where $\boldsymbol{\mathscr{M}}$ is allowed to have rank larger than one. In contrast to the rank-one case, the larger rank scenario presents many complications when considering the CP decomposition. In this setting, our analysis cannot guarantee near-efficient CP factor estimation. Section \ref{sec:experiments} will explore this situation numerically, illustrating that rank-$2$ and rank-$5$ CP decomposition estimation may not achieve the CRLB. 

However, Poisson parameter inference can be shown to be near minimax optimal via our shifted estimator. In other words, we now consider estimating the entire array $\boldsymbol{\mathscr{M}}$ rather than targeting its CP factors as in the rank-one case. We will provide an estimator $\widetilde{\boldsymbol{\mathscr{M}}}$ and study its accuracy with an MSE bound, which we then gauge in optimality via minimax analysis rather than the CRLB.

To elaborate, in the rank-$R$ scenario, we now consider the search space
\[
\mathcal{S}_R(\tilde{\beta},\tilde{\alpha}) \coloneqq \Bigg\{ \boldsymbol{\mathscr{T}} = \sum_{r=1}^{R}\textbf{w}_r^{(1)}\circ\cdots \circ \textbf{w}_r^{(N)} \ \Big| \ \textbf{w}_r^{(n)}\in \mathbb{R}^I, \ \ \tilde{\beta}\leq t_{\textbf{i}}\leq\tilde{\alpha} \Bigg\},
\]
and produce the estimator
\begin{equation}
\label{maxlikeR}
    \widetilde{\boldsymbol{\mathscr{M}}} \coloneqq \argmax_{\boldsymbol{\mathscr{T}}\in\mathcal{S}_R}\mathcal{L}_{\boldsymbol{\mathscr{X}}}(\boldsymbol{\mathscr{T}}),
\end{equation}
where we write $\mathcal{S}_R(\tilde{\beta},\tilde{\alpha})$ as $\mathcal{S}_R$ for compactness. Notice that \eqref{maxlikeR} extends \eqref{maxlike} to search for rank-$R$ tensors, but imposes bounded Poisson parameters rather than factor entry bounds. Our main result in this setting is the following:

\begin{theorem}[Rank-$R$ parametric inference]
\label{MSEthm}
    Let $\tilde{\beta}>0$, assume that $\boldsymbol{\mathscr{M}}\in\mathcal{S}_R(\tilde{\beta},\tilde{\alpha})$ and let $\widetilde{\boldsymbol{\mathscr{M}}}$ be given as in \eqref{maxlikeR}. There exists absolute constants $C_1,C_2>0$ such that for any $\delta>0$ we have
    \begin{equation}
\label{msebound}
    \big\|\widetilde{\boldsymbol{\mathscr{M}}}-\boldsymbol{\mathscr{M}}\big\|_F^2 \leq \frac{C_2(\tilde{\alpha}+1)^4(N+\delta)^2\log^2(I)NIR^2}{\left(\tilde{\beta}+1\right)^2}
\end{equation}
with probability exceeding $1-I^{-\delta}-\exp(-C_1NI\log(N))$.
\end{theorem}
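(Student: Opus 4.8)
The plan is to run a constrained $M$-estimation (``basic inequality'') argument: exploit that $\widetilde{\boldsymbol{\mathscr{M}}}$ maximizes $\mathcal{L}_{\boldsymbol{\mathscr{X}}}$ over $\mathcal{S}_R$ while the truth $\boldsymbol{\mathscr{M}}$ is feasible, convert the resulting likelihood gap into a squared Frobenius distance via strong convexity, and control the stochastic remainder by a uniform concentration bound over the low-rank class. Writing the shifted quantities $s_{\textbf{i}}=t_{\textbf{i}}+1$, $a_{\textbf{i}}=m_{\textbf{i}}+1$, $b_{\textbf{i}}=\widetilde m_{\textbf{i}}+1$ and $y_{\textbf{i}}=x_{\textbf{i}}+1$ (so $\mathbb{E}\,y_{\textbf{i}}=a_{\textbf{i}}$), the objective reads $\mathcal{L}_{\boldsymbol{\mathscr{X}}}(\boldsymbol{\mathscr{T}})=\sum_{\textbf{i}} y_{\textbf{i}}\log s_{\textbf{i}}-s_{\textbf{i}}$. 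Since $\boldsymbol{\mathscr{M}}\in\mathcal{S}_R$, optimality gives $\mathcal{L}_{\boldsymbol{\mathscr{X}}}(\widetilde{\boldsymbol{\mathscr{M}}})\ge\mathcal{L}_{\boldsymbol{\mathscr{X}}}(\boldsymbol{\mathscr{M}})$, and substituting $y_{\textbf{i}}=a_{\textbf{i}}+\xi_{\textbf{i}}$ with the centered Poisson noise $\xi_{\textbf{i}}=x_{\textbf{i}}-m_{\textbf{i}}$ rearranges this to
\[
\sum_{\textbf{i}} a_{\textbf{i}}\,\phi\!\left(\frac{b_{\textbf{i}}}{a_{\textbf{i}}}\right)\ \le\ \sum_{\textbf{i}}\xi_{\textbf{i}}\,g_{\textbf{i}},
\qquad \phi(u)=u-1-\log u,\quad g_{\textbf{i}}=\log\frac{b_{\textbf{i}}}{a_{\textbf{i}}}.
\]
The left side is a nonnegative Bregman-type term (a sum of KL divergences between shifted Poisson means), and the right side is the ``noise'' functional to be controlled.

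Next I would lower-bound the left side and upper-bound the right. Because all shifted entries lie in $[\tilde\beta+1,\tilde\alpha+1]$, I use $\phi''(u)=u^{-2}$ to obtain a uniform strong-convexity estimate $a_{\textbf{i}}\,\phi(b_{\textbf{i}}/a_{\textbf{i}})\gtrsim (\tilde\alpha+1)^{-3}(\tilde\beta+1)^{2}(\widetilde m_{\textbf{i}}-m_{\textbf{i}})^2$, so that the left side dominates $\tfrac{(\tilde\beta+1)^2}{(\tilde\alpha+1)^3}\,\|\Delta\|_F^2$ with $\Delta=\widetilde{\boldsymbol{\mathscr{M}}}-\boldsymbol{\mathscr{M}}$. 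For the right side I first note that the logarithm is Lipschitz on the bounded range, $|g_{\textbf{i}}|\le|\widetilde m_{\textbf{i}}-m_{\textbf{i}}|/(\tilde\beta+1)$, whence $\|g\|_2\le \|\Delta\|_F/(\tilde\beta+1)$. If I can show, uniformly over $\boldsymbol{\mathscr{T}}\in\mathcal{S}_R$, that $\sum_{\textbf{i}}\xi_{\textbf{i}}g_{\textbf{i}}\le T\,\|g\|_2$ on a high-probability event for a suitable level $T$, then combining the three estimates yields the self-bounding inequality $\tfrac{(\tilde\beta+1)^2}{(\tilde\alpha+1)^3}\|\Delta\|_F^2\lesssim \tfrac{T}{\tilde\beta+1}\,\|\Delta\|_F$, which solves to $\|\Delta\|_F^2\lesssim (\tilde\alpha+1)^{6}(\tilde\beta+1)^{-6}\,T^2$; tracking the boundedness constants carefully (and substituting the $\tilde\alpha$-dependence hidden inside $T$) is what collapses the prefactor to the stated $(\tilde\alpha+1)^4/(\tilde\beta+1)^2$.

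The chief obstacle is bounding $T$, and it is delicate for two reasons. First, $\xi_{\textbf{i}}$ is only sub-exponential, so I would (i) show via a maximal inequality for $I^N$ independent Poissons of bounded mean that w.h.p.\ (failure $\lesssim I^{-\delta}$) every $|\xi_{\textbf{i}}|\lesssim (N+\delta)\log I$, and (ii) apply Bernstein's inequality to the fixed functional $\sum_{\textbf{i}}\xi_{\textbf{i}}g_{\textbf{i}}$, whose variance proxy is $\sum_{\textbf{i}} m_{\textbf{i}}g_{\textbf{i}}^2\le(\tilde\alpha+1)\|g\|_2^2$ and whose sub-exponential scale is $\lesssim (N+\delta)\log I$. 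Second, I must make this uniform over the \emph{nonconvex, nonlinear} manifold $\mathcal{S}_R$ of rank-$R$ tensors. The subtlety here is that $\mathcal{S}_R$ is constrained through the full-tensor entries rather than through factor norms, so individual CP factors could in principle be large with cancellation; I would first pass to a balanced CP representation to bound factor magnitudes in terms of $\tilde\alpha$ and $I$, and then use Lipschitzness of the multilinear map to obtain a Frobenius covering-number exponent of order $RNI$ (up to logarithmic factors in the net resolution). A union bound of the per-point Bernstein tail over this net contributes the $\exp(-C_1 NI\log N)$ failure probability and produces $T\lesssim (\tilde\alpha+1)^{1/2}(N+\delta)\,R\sqrt{NI}\,\log I$, whose square supplies the $NIR^2(N+\delta)^2\log^2 I$ factor; the discretization error is absorbed by choosing the net polynomially fine in $I$ (accounting for the logarithmic terms) and dominating it with the quadratic lower bound. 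I expect the covering-number estimate for the entrywise-bounded low-CP-rank class, together with the balancing step, to be the technically hardest ingredient.
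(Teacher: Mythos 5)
Your overall skeleton---basic inequality from optimality of $\widetilde{\boldsymbol{\mathscr{M}}}$, a strong-convexity lower bound on the Bregman/KL term with constants from the entrywise bounds $[\tilde\beta,\tilde\alpha]$, and high-probability control of the stochastic linear functional---matches the paper's proof in spirit (the paper runs the same argument through a second-order Taylor expansion, bounding the Hessian term by $-(\tilde\alpha+1)^{-2}\|\boldsymbol{\mathscr{T}}-\boldsymbol{\mathscr{M}}\|_F^2$). But your treatment of the noise term has a genuine gap at exactly the step you flag as hardest. You propose to make $\sum_{\textbf{i}}\xi_{\textbf{i}}g_{\textbf{i}}$ uniform over $\mathcal{S}_R$ via a covering net of metric-entropy order $RNI$, obtained by ``passing to a balanced CP representation to bound factor magnitudes in terms of $\tilde\alpha$ and $I$.'' No such balancing is available: for the CP format, entrywise boundedness of a rank-$R$ tensor does \emph{not} imply the existence of a decomposition with polynomially bounded factors. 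Degenerate rank-$R$ tensors (those near lower border rank) can force every exact CP decomposition to have factors with large, cancelling components, and no general bound on the minimal factor norms in terms of $\tilde\alpha$, $I$, $R$ is known---this is precisely the CP-specific structural obstruction the paper highlights in Sections \ref{discussion} and \ref{maindiscuss}. Without it, your covering-number estimate for the entrywise-bounded rank-$R$ class, and hence your level $T$ and the $\exp(-C_1NI\log N)$ term, do not follow, so the proof as proposed does not close.

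The paper circumvents this entirely: after Taylor expansion the linear term involves $\Delta=\boldsymbol{\mathscr{T}}-\boldsymbol{\mathscr{M}}$ itself (a tensor of rank at most $2R$, unlike your $g_{\textbf{i}}=\log(b_{\textbf{i}}/a_{\textbf{i}})$, which is a nonlinear transform and not low-rank), so it can apply the duality $\langle\boldsymbol{\mathscr{S}},\Delta\rangle\le\|\boldsymbol{\mathscr{S}}\|\,\|\Delta\|_*$ together with the new nuclear-norm bound $\|\Delta\|_*\le 2R\|\Delta\|_F$ (Theorem \ref{rankRthm}, proved by Gram--Schmidt on the rank-one components, requiring no factor-norm control) and an existing spectral-norm concentration bound for tensors with independent bounded centered entries (Lemma 6 of \cite{minimaxbin}, after the same Poisson-tail truncation at $(N+\delta)\log I$ that you use). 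This is where the $R\sqrt{NI}\log I$ scaling actually comes from. If your covering argument could be repaired, it would in fact suggest a sharper rate ($\sim RNI$ rather than $R^2NI$), which is a further sign that the entropy estimate is the unproven crux rather than a routine step. A secondary, smaller issue: your displayed constants give a prefactor of order $(\tilde\alpha+1)^{7}(\tilde\beta+1)^{-6}$ before the promised ``collapse,'' and the passage to the stated $(\tilde\alpha+1)^4(\tilde\beta+1)^{-2}$ is asserted rather than derived; the paper's Hessian bound yields $(\tilde\alpha+1)^{2}$ directly at the quadratic step, which squares to the stated exponent.
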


In contrast to the rank-one case, which bounds the CP factors $\btheta$, the rank greater than one case bounds the Poisson parameters. To gauge the optimality of this result we deviate from CRLB analysis and instead present a minimax lower bound.
\begin{theorem}[Minimax lower bound]
\label{Mini}
    Under the Poisson observation model, $\boldsymbol{\mathscr{X}}\sim$ Poisson$\left(\boldsymbol{\mathscr{M}}\right)$, let $\inf_{\widehat{\boldsymbol{\mathscr{M}}}}$ denote the infimum over all estimators $\widehat{\boldsymbol{\mathscr{M}}}\in \mathcal{S}_R(\tilde{\beta},\tilde{\alpha})$ based on the count tensor observation $\boldsymbol{\mathscr{X}}$. 
    Assume that $ R \leq I$, $IR > 16$, and that
    \begin{equation}
        \label{condmini}
        \left(\frac{IR}{ 16} - 1\right) \frac{\tilde{\beta}\log 2}{(\tilde{\alpha} - \tilde{\beta})^2} \leq I^N.
    \end{equation}
    Then,
    \begin{equation}
    \label{minibound}
        \inf_{\widehat{\boldsymbol{\mathscr{M}}}}\sup_{\boldsymbol{\mathscr{M}}\in S_R}\EX\big\|\widehat{\boldsymbol{\mathscr{M}}}-\boldsymbol{\mathscr{M}}\big\|_F^2 \geq \frac{\tilde{\beta}\log 2}{128} \left(\frac{IR}{16} - 1 \right).
    \end{equation}
\end{theorem}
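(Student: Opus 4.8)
The plan is to prove the minimax lower bound \eqref{minibound} by reducing estimation to multiple hypothesis testing and applying Fano's inequality over a carefully chosen finite subfamily of $\mathcal{S}_R(\tilde\beta,\tilde\alpha)$. The guiding heuristic is that a rank-$R$ tensor of dimension $I^N$ conceals roughly $IR$ freely adjustable Poisson rates sitting near the floor $\tilde\beta$, each contributing variance of order $\tilde\beta$; summing these contributions produces the target rate $\tilde\beta\, IR$. The $\log 2$ factor and the additive $-1$ in \eqref{minibound} are the fingerprints of a Fano argument built on a Varshamov--Gilbert packing.

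First I would construct the hypothesis family. Partition a secondary mode, say mode $2$, into $R$ disjoint nonempty blocks $B_1,\dots,B_R\subseteq[I]$ of size at least $\lfloor I/R\rfloor\ge I/(2R)$ (using $R\le I$), fix the factors in modes $3,\dots,N$ to be all-ones vectors $\mathbf{1}$, and use the block indicators $\mathbf{1}_{B_r}$ as the mode-$2$ factors. For a binary array $\omega\in\{0,1\}^{I\times R}$ set the mode-$1$ factor entries to $u_{i,r}=\tilde\beta+s\,\omega_{i,r}$ and define
\[
\boldsymbol{\mathscr{M}}_\omega \;=\; \sum_{r=1}^R \mathbf{u}_r \circ \mathbf{1}_{B_r}\circ \mathbf{1}\circ\cdots\circ\mathbf{1},
\]
so the entry at $(i_1,i_2,\dots,i_N)$ equals $u_{i_1,\,r(i_2)}\in\{\tilde\beta,\tilde\beta+s\}$, where $r(i_2)$ is the block containing $i_2$. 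Every $\boldsymbol{\mathscr{M}}_\omega$ then has CP rank at most $R$, and taking the step $s\le\tilde\alpha-\tilde\beta$ keeps all entries inside $[\tilde\beta,\tilde\alpha]$, so $\boldsymbol{\mathscr{M}}_\omega\in\mathcal{S}_R$. This exposes $IR$ independently toggleable coordinates, each governing the $\lvert B_r\rvert\,I^{N-2}\ge I^{N-1}/(2R)$ tensor entries it indexes.

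Next I would extract a well-separated codebook and quantify separation against information. By Varshamov--Gilbert there is $\Omega\subseteq\{0,1\}^{IR}$ containing the all-zero string $\omega^0$ with $\lvert\Omega\rvert\ge 2^{IR/8}$ and pairwise Hamming distance at least $IR/8$. Since flipping one bit alters at least $I^{N-1}/(2R)$ entries by exactly $s$, for distinct $\omega,\omega'\in\Omega$,
\[
\big\|\boldsymbol{\mathscr{M}}_\omega-\boldsymbol{\mathscr{M}}_{\omega'}\big\|_F^2 \;\ge\; \frac{IR}{8}\cdot\frac{I^{N-1}}{2R}\,s^2 \;=\;\frac{I^{N}}{16}\,s^2 .
\]
Because observations are independent across entries, the divergence between hypotheses is a sum of per-entry Poisson divergences; using $D_{\mathrm{KL}}\!\big(\mathrm{Poisson}(\tilde\beta+s)\,\big\|\,\mathrm{Poisson}(\tilde\beta)\big)\le s^2/\tilde\beta$ and bounding the number of differing entries by the total $I^N$ gives $D_{\mathrm{KL}}(P_\omega\|P_{\omega^0})\le I^N s^2/\tilde\beta$ for every $\omega$. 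Choosing $s^2\asymp \tilde\beta R\log 2 /I^{N-1}$ forces the average divergence to at most $\tfrac12\log\lvert\Omega\rvert$, and the feasibility requirement $s\le\tilde\alpha-\tilde\beta$ is exactly condition \eqref{condmini}. Feeding both bounds into Fano's inequality in the form
\[
\inf_{\widehat{\boldsymbol{\mathscr{M}}}}\sup_{\omega\in\Omega}\EX\big\|\widehat{\boldsymbol{\mathscr{M}}}-\boldsymbol{\mathscr{M}}_\omega\big\|_F^2 \ge \tfrac14\Big(\min_{\omega\ne\omega'}\big\|\boldsymbol{\mathscr{M}}_\omega-\boldsymbol{\mathscr{M}}_{\omega'}\big\|_F^2\Big)\Big(1-\tfrac{\frac{1}{\lvert\Omega\rvert}\sum_\omega D_{\mathrm{KL}}(P_\omega\|P_{\omega^0})+\log 2}{\log\lvert\Omega\rvert}\Big)
\]
and tracking the constants reproduces $\frac{\tilde\beta\log 2}{128}\big(\frac{IR}{16}-1\big)$, the $-1$ emerging from the residual $\log 2/\log\lvert\Omega\rvert$ term.

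I expect the main obstacle to be the construction itself: exhibiting roughly $IR$ coordinates that perturb independently while simultaneously (i) keeping the CP rank at most $R$, (ii) holding every entry strictly inside $[\tilde\beta,\tilde\alpha]$—in particular avoiding the zeros that a naive disjoint-support construction would create—and (iii) making each perturbation touch enough entries that the Frobenius separation is large yet the accumulated divergence stays below $\log\lvert\Omega\rvert$. The block design above resolves (i)--(ii), so the delicate part reduces to the quantitative balancing in the third step, where $s$ must be large enough for separation but small enough for the Fano bound to remain positive; this tension is precisely what compels the feasibility hypothesis \eqref{condmini} and produces the additive $-1$ in \eqref{minibound}.
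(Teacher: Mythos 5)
Your proposal is correct and follows essentially the same route as the paper's proof: a Varshamov--Gilbert packing of rank-$\le R$ tensors with entries in $\{\tilde{\beta},\,\tilde{\beta}+s\}$ built from an $I\times R$ grid of free coordinates replicated along the remaining modes (your contiguous mode-2 blocks and the paper's tiled $I\times R$ matrix $\bA(\bM)$ are interchangeable constructions), combined with the per-entry Poisson KL bound $s^2/\tilde{\beta}$ and Fano's inequality. Your tuning of the step $s$ so the information term stays below half of $\log|\Omega|$, with feasibility $s\le\tilde{\alpha}-\tilde{\beta}$ producing exactly condition \eqref{condmini} and the constant $\frac{\tilde{\beta}\log 2}{128}\left(\frac{IR}{16}-1\right)$, matches the paper's choice of $\epsilon$ step by step.
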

The bound \eqref{minibound} illustrates the best derivable MSE rate for parameter tensors in $\mathcal{S}_R$. The result shows that ``worst-case'' Poisson parameters (achieving the supremum) will suffer an MSE of the order $IR$, and therefore one cannot expect to improve upon this rate for general tensors. Comparing \eqref{minibound} to the MSE upper bound \eqref{msebound}, $\sim IR^2\log^2(I)$, we see that our shifted-Poisson estimator is optimal up to a rank factor and a logarithmic term. Theorem \ref{MSEthm} improves on previous CP-based bounds from the literature in terms of rank dependence; see Section \ref{maindiscuss} for further discussion.

\subsection{Discussion}
\label{maindiscuss}

As in \cite{NEURIPS2022,CRLBgap}, our framework illustrates an interesting trade-off where non-asymptotic analysis comes at the loss of strict efficiency. Our approach introduces the more flexible concept of near-efficiency which can be achieved under limited observations. In contrast to other work, the methodology provides a simplified approach to CRLB-based analysis under finite samples while not deviating from the original definition of efficiency. The approach simplifies the study of estimators derived from constrained optimization programs, which typically do not involve straightforward and closed-form solutions and its utility goes beyond multiway analysis.

In the literature of tensor decompositions, our work is novel in its attempt to express error bounds in terms of the CP factors $\btheta$ (muliway analysis, as in Theorem \ref{simpthm}). In contrast, most results focus on error bounds involving the parameter tensor $\boldsymbol{\mathscr{M}}$ (i.e., parametric inference). In this context, Theorem \ref{MSEthm} improves upon the literature in terms of CP rank dependency. To the best of our knowledge, all results in the literature exhibit error terms with polynomial dependence on the CP rank or largest array in terms of $N$ (e.g., \cite{minimaxbin} with MSE $\sim IR^{N-1}$). This gap is also apparent in cases with missing information \cite{montanari,incoherence,navid,navid2,navid3,harris2023spectral} (e.g., tensor completion). In contrast, Theorem \ref{MSEthm} derives the rate MSE $\sim IR^2\log^2(I)$ which removes the exponential dependence on the order. 

We note that work in the literature has been able to provide optimal sample complexity results for other factorization choices: Tucker decomposition \cite{Anru1,Anru2}, higher order singular
value decomposition, tensor train, and hierarchical Tucker format \cite{IHT}. This highlights that the CP decomposition lacks theoretical understanding and CP-based analysis is more challenging than alternative multiway array decomposition choices. One of our contributions is to improve nuclear norm-based bounds in terms of the CP rank (see Theorem \ref{rankRthm}). This leads to our improved error rates and may be of interest on its own.

\section{Numerical Experiments}
\label{sec:experiments}

This section presents numerical experiments that validate Theorem \ref{simpthm} in the rank-one case and illustrate why its conclusions do not extend to rank larger than one settings. All experiments were implemented in MATLAB using the Tensor Toolbox \cite{Bader2021TensorToolbox} and its generalized CP (GCP) optimization framework. The purpose of these experiments is to empirically evaluate the correspondence between the mean squared error (MSE) and the trace of the FIM-PI, tr$\left(\mathcal{I}_{\btheta}^{\dagger}\right)$, under the shifted-Poisson regression model to achieve multiway analysis.

\subsection{Rank-one Case}

We construct a Poisson parameter tensor $\boldsymbol{\mathscr{M}}$ of order $N$ and dimensions $I$ with entries constrained to the interval $[\beta,\alpha]$. The tensor is formed by taking the outer product of $N$ nonnegative factor vectors, each generated randomly with entries uniformly distributed in the interval $[\beta^{1/N}, \alpha^{1/N}]$. Thus, $\boldsymbol{\mathscr{M}} = \mathbf{u}^{(1)} \circ \cdots \circ \mathbf{u}^{(N)}$, ensuring that all tensor entries lie within the prescribed bounds.

Given $\boldsymbol{\mathscr{M}}$, we generate Poisson-distributed observations $\boldsymbol{\mathscr{Y}} \sim \operatorname{Poisson}(\boldsymbol{\mathscr{M}})$ using \texttt{poissrnd},
and estimate the tensor via shifted-Poisson regression:\begin{equation}
\label{maxlike2}
    \widetilde{\boldsymbol{\mathscr{M}}} \coloneqq \argmax_{\boldsymbol{\mathscr{T}}\in\widetilde{\mathcal{S}}_1} \sum_{\textbf{i}}(x_{\textbf{i}}+1)\log\left(t_{\textbf{i}} + 1\right) - (t_{\textbf{i}} + 1).
\end{equation}
where 
\[
\widetilde{\mathcal{S}}_1 \coloneqq \Big\{ \boldsymbol{\mathscr{T}} = \textbf{w}^{(1)}\circ\cdots \circ \textbf{w}^{(N)} \ | \ \{\textbf{w}^{(n)}\}_{n=1}^{N} \subset (0,\infty)^I \Big\}.
\]
This program is very similar to \eqref{maxlike}, but only imposes a non-negativity constraint on the factor entries. We solve \eqref{maxlike2} using the \verb|gcp_opt| solver, which outputs our estimator in rank-one format $\widetilde{\btheta} = \left(\widetilde{\textbf{u}}^{(1)},\cdots,\widetilde{\textbf{u}}^{(N)}\right)$. The optimization is performed using the \texttt{lbfgsb} quasi-Newton solver with nonnegativity constraints. We scale the factors such that $\|\textbf{u}^{(1)}\|_2 = \cdots =\|\textbf{u}^{(N)}\|_2$ and $\|\widetilde{\textbf{u}}^{(1)}\|_2 = \cdots =\|\widetilde{\textbf{u}}^{(N)}\|_2$, and then compute the error $\|\btheta-\widetilde{\btheta}\|_2^2$. 

To compute tr$\left(\mathcal{I}_{\btheta}^{\dagger}\right)$, we use Lemma 4 in \cite{PCPFIMpaper} that provides the structure of our Poisson Fisher Information matrix. The true factors of $\boldsymbol{\mathscr{M}}$ are further normalized to have equal $\ell_1$-norm (i.e., equal sums across modes). This normalization is to agree with the proof of Theorem 6 that simplifies calculations, but we note that this choice does not seem to severely affect the value of tr$\left(\mathcal{I}_{\btheta}^{\dagger}\right)$ in contrast to other normalization options.

The quantities $\mathrm{MSE}$ and tr$\left(\mathcal{I}_{\btheta}^{\dagger}\right)$ are averaged over 50 realizations of random parameter tensors with $\beta=1$ and $\alpha = 2$. Each experiment is repeated for ambient dimension sizes $I=50$, $75$, and $100$, and for tensor orders $N=3$ and $N=4$. Figure \ref{plot1} reports the mean value with error bars corresponding to the minimum and maximum observed values across repetitions.

Figures \ref{plot1} demonstrates that Theorem \ref{simpthm} holds in practice. In all scenarios, we observe near-efficiency since the estimated MSE and FIM-PI trace match. Furthermore, the order of these terms is in agreement with our main result as $I$ and $N$ vary. We note that the Poisson parameter bounds ($\beta,\alpha$) do not seem to have a significant influence on the behavior of the plots.

\begin{figure}
    \centering
    \includegraphics[width=0.4\linewidth]{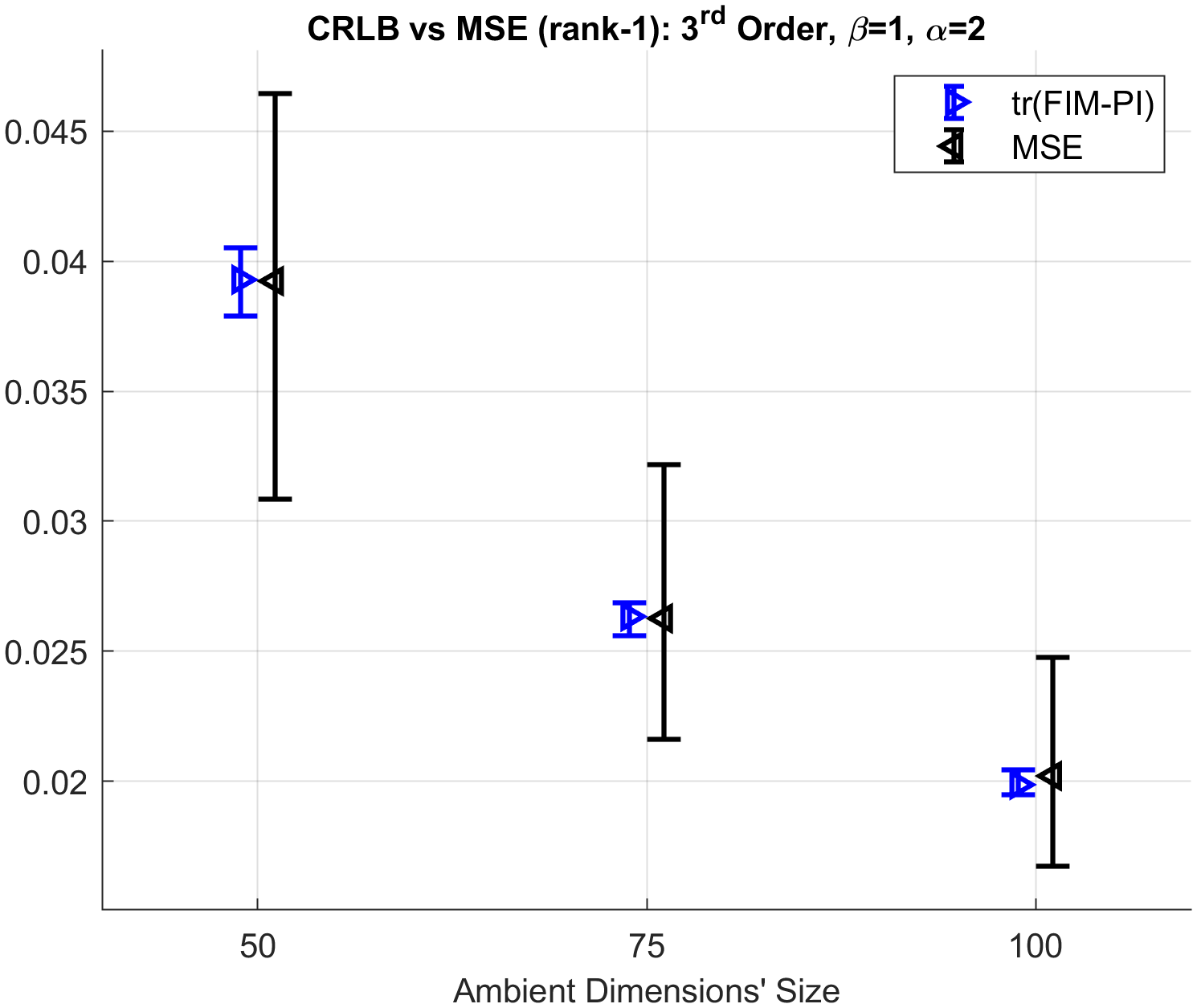} \hspace{50pt}
    \includegraphics[width=0.4\linewidth]{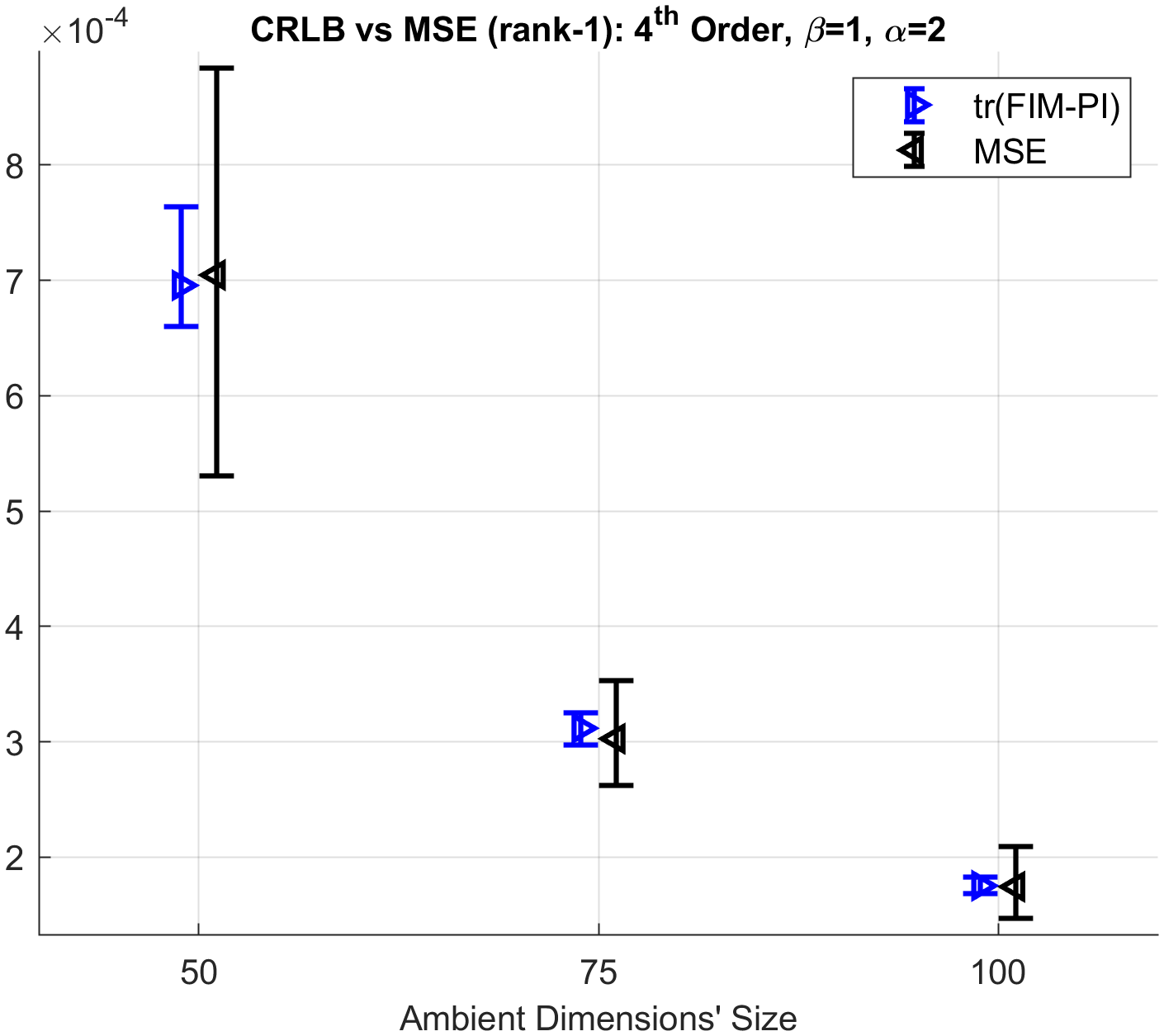}
    \caption{Rank-one experiments: dimension sizes ($I$) vs mean value of tr$\left(\mathcal{I}_{\btheta}^{\dagger}\right)$ and $\|\btheta-\widetilde{\btheta}\|_2^2$, with error bars. Left plot shows the $N=3$ dimensional case and right plot shows $N=4$.}
    \label{plot1}
\end{figure}

\subsection{Larger Rank Cases}

The rank-2 and rank-5 cases follow the same structure, with the key modification that $\boldsymbol{\mathscr{M}}$ is generated as a sum of $R = 2$ or $R = 5$ rank-one components:
\[
\boldsymbol{\mathscr{M}} = \sum_{r=1}^R \mathbf{u}^{(1)}_r \circ \cdots \circ \mathbf{u}^{(N)}_r
\]
where each factor vector $\mathbf{u}^{(n)}_r$ has entries uniformly distributed in
\[
\bigl[\beta^{1/N} / R^{1/N}, \ \alpha^{1/N} / R^{1/N}\bigr],
\]
ensuring that the tensor entries remain within the prescribed range $[\beta,\alpha]$.

As in the rank-one setting, Poisson observations are generated using \texttt{poissrnd}, and the shifted-Poisson regression problem is solved via \texttt{gcp\_opt} with a rank-$R$ constraint and only imposing nonnegative factors. After estimation, the factor vectors of both the ground truth and the recovered tensor are normalized so that all vectors in a given rank-one component have equal $\ell_2$-norm, preserving the total component scale. The factors are stacked into a tall vector and the MSE is computed as before. For the Fisher information term, the factors are again normalized to have equal $\ell_1$-norm per component for consistency to the rank-one case. The FIM is computed using Theorem 3 in \cite{PCPFIMpaper}, pseudo-inverted, and its trace recorded. 
 
 Figures~\ref{plot2} and \ref{plot5} report the empirical MSE and the quantity tr$\left(\mathcal{I}_{\boldsymbol{\theta}}^{\dagger}\right)$, averaged over 50 independent trials, as functions of the ambient dimension $I \in \{50, 75, 100, 125, 150\}$ for both $N = 3$ and $N = 4$. For each dimension, the mean, minimum, and maximum values across realizations are shown. Unlike the rank-one setting, these figures reveal a noticeable gap between the average MSE and the FIM trace term. This indicates that, even in low-rank tensor settings, the rank-constrained estimators do not generally achieve the CRLB. Nevertheless, the spread of the curves suggests that a substantial fraction of the estimators lie near the CRLB, particularly in the large scale $N = 4$ experiments.

To better understand this behavior and highlight the concentration of estimators near the CRLB, we include quantile-based visualizations. Figure~\ref{plotQ} displays the median together with quantile bands: the rank-2 case uses a 0–90\% band, while the rank-5 case uses a more restrictive 0–65\% band due to the increased variability. These plots demonstrate that, although a formal proof of near-efficiency is not yet available, the numerical evidence strongly suggests that most realizations achieve performance close to the CRLB. Specifically, in the rank-2 experiments, only a small number of outliers fall outside the efficient range (beyond the 90th percentile), while in the rank-5 case a greater number of outliers are observed, yet approximately 32 out of 50 trials still lie within the 65\% efficiency band. This indicates an interesting trade-off between tensor rank and the empirical attainability of CRLB-level efficiency.

\begin{figure}
    \centering
    \includegraphics[width=0.4\linewidth]{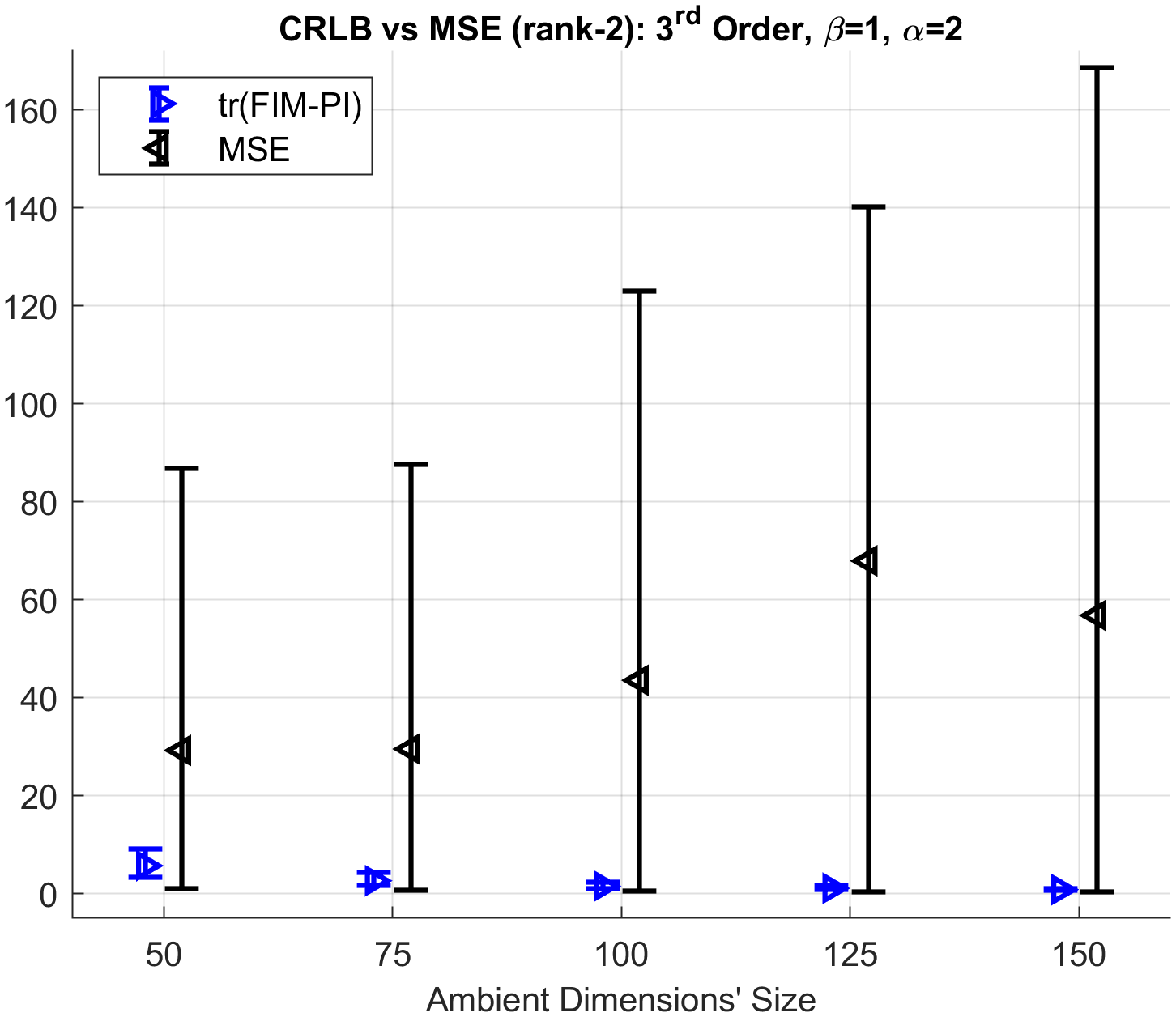} \hspace{50pt}
    \includegraphics[width=0.4\linewidth]{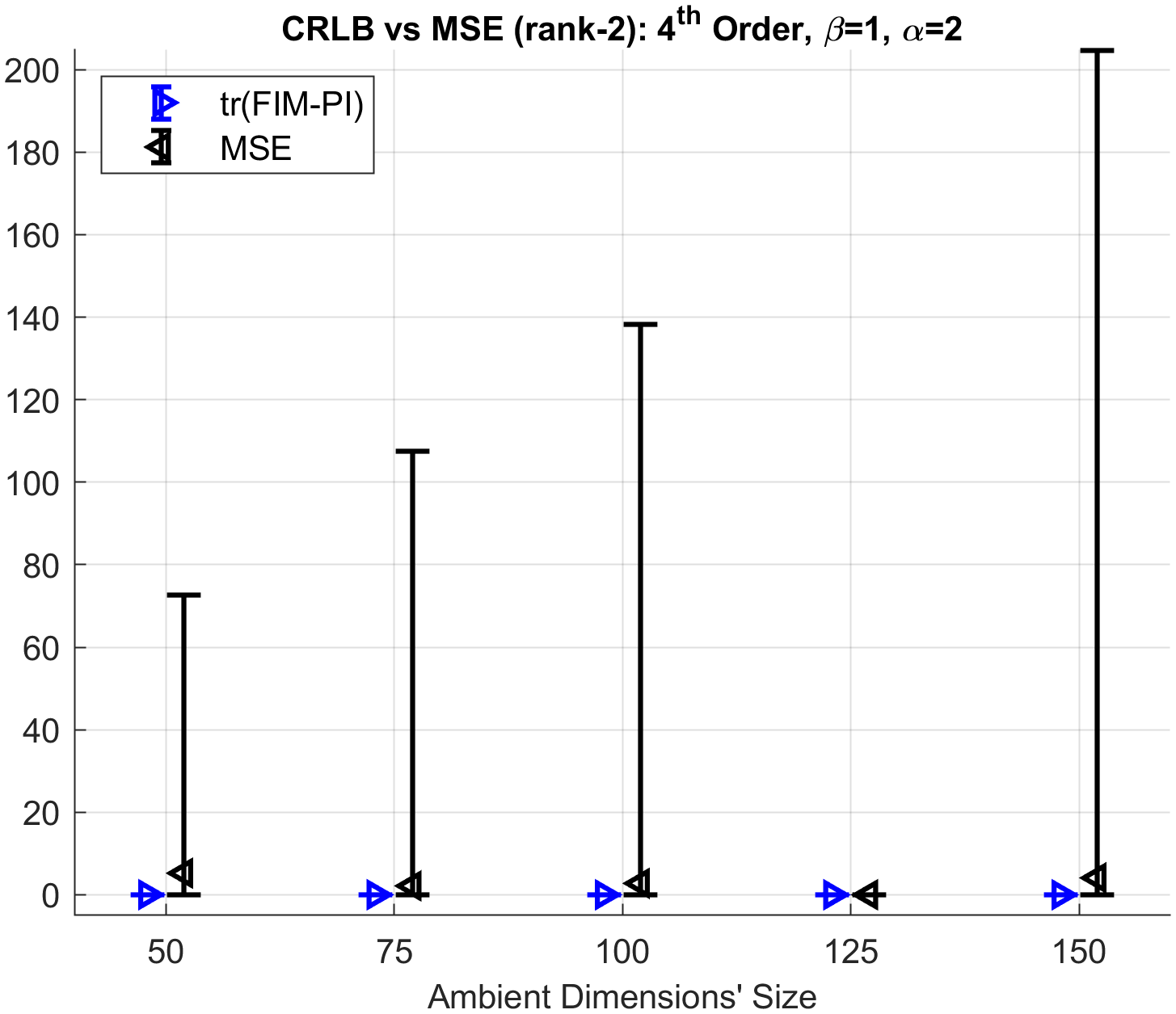}
    \caption{Rank-2 experiments: dimension sizes ($I$) vs mean value of tr$\left(\mathcal{I}_{\btheta}^{\dagger}\right)$ and $\|\btheta-\widetilde{\btheta}\|_2^2$, with error bars. Left plot shows the $N=3$ dimensional case and right plot shows $N=4$.}
    \label{plot2}
\end{figure}

\begin{figure}
    \centering
    \includegraphics[width=0.4\linewidth]{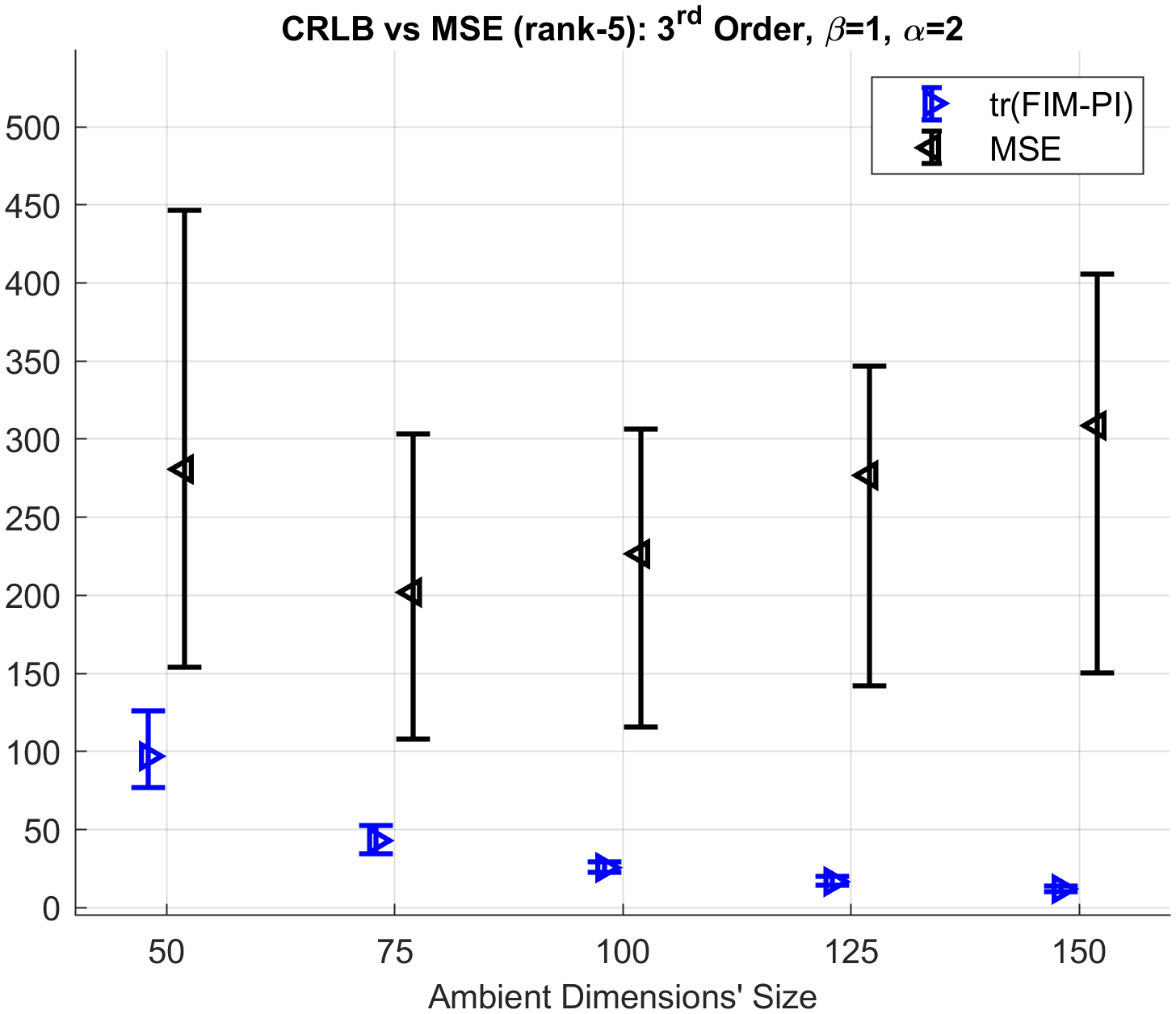} \hspace{50pt}
    \includegraphics[width=0.4\linewidth]{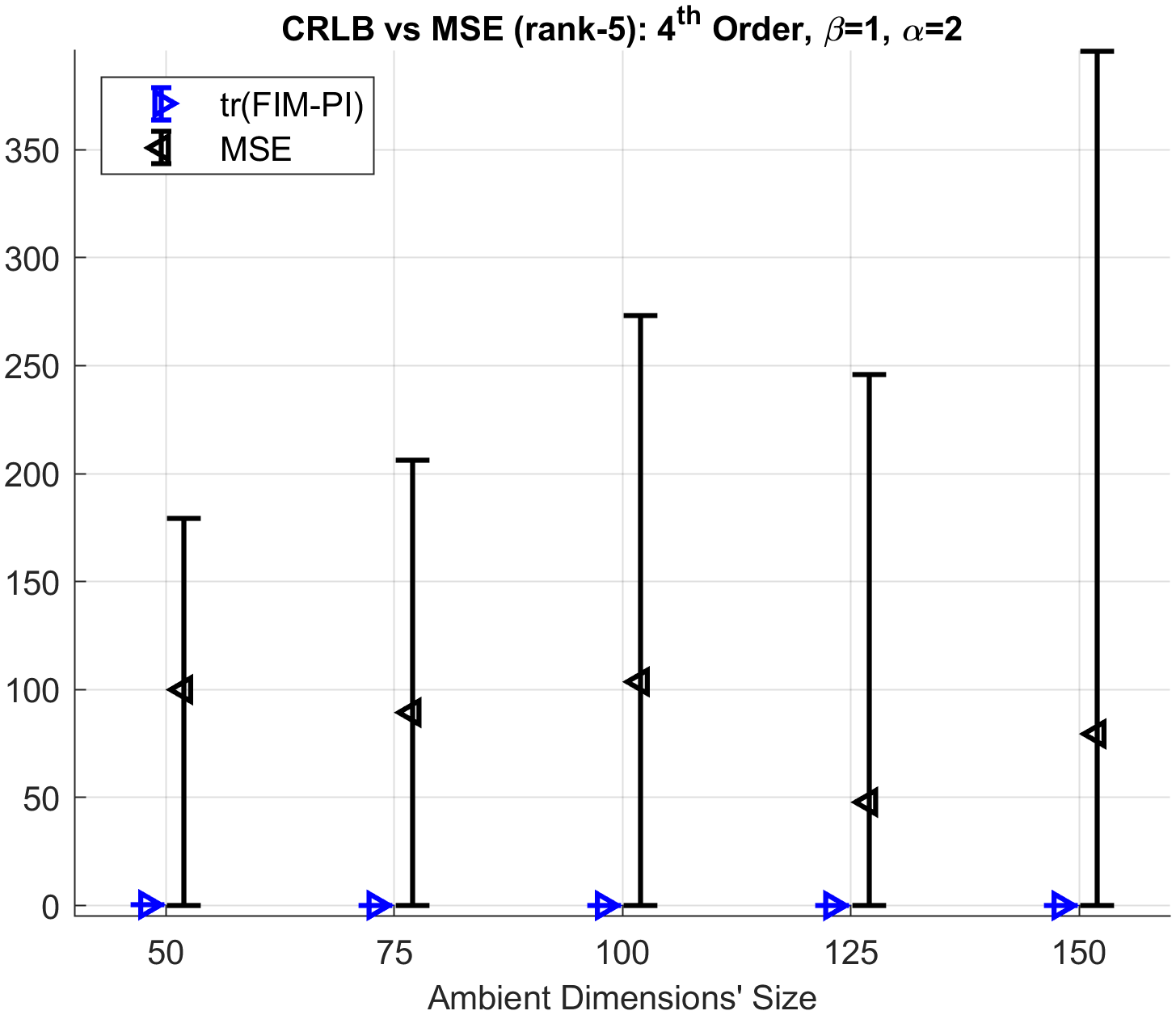}
    \caption{Rank-5 experiments: dimension sizes ($I$) vs mean value of tr$\left(\mathcal{I}_{\btheta}^{\dagger}\right)$ and $\|\btheta-\widetilde{\btheta}\|_2^2$, with error bars. Left plot shows the $N=3$ dimensional case and right plot shows $N=4$.}
    \label{plot5}
\end{figure}

\begin{figure}
    \centering
    \includegraphics[width=0.4\linewidth]{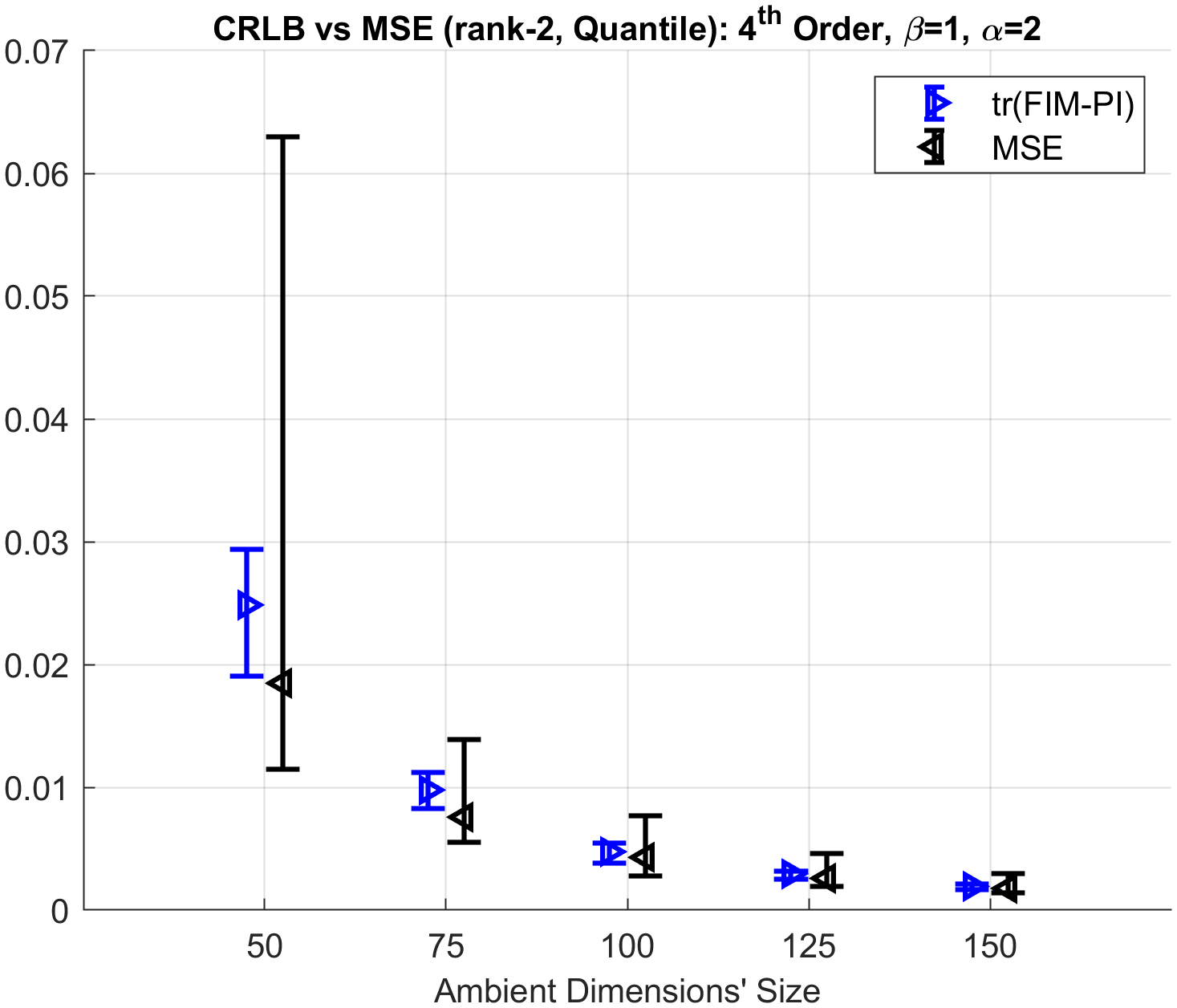} \hspace{50pt}
    \includegraphics[width=0.4\linewidth]{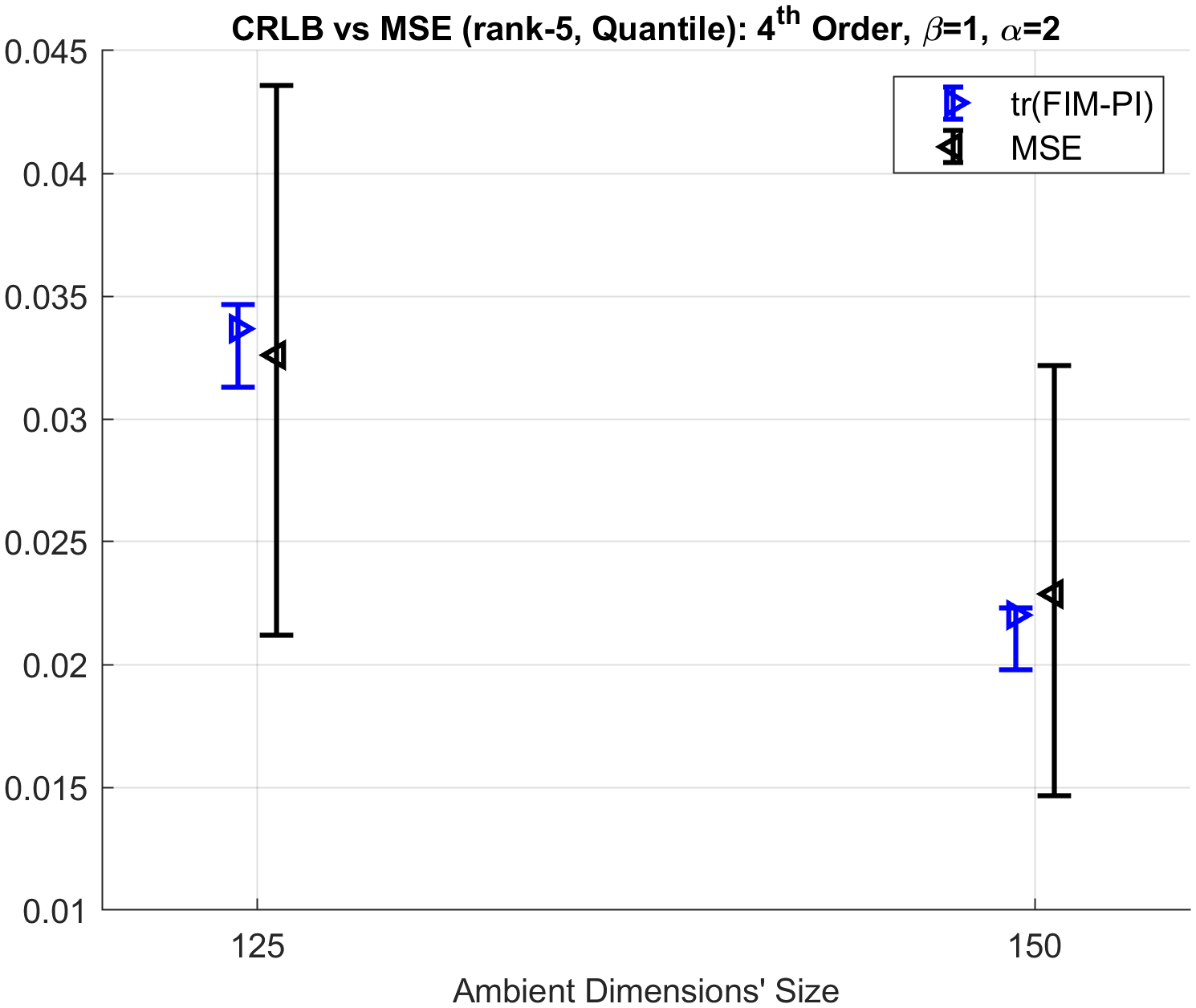}
    \caption{$4^{\mbox{th}}$ order quantile plots: left plot shows the rank-2 case with median and a 0–90\% quantile band and the right plot shows the rank-5 case with median and a 0–65\% quantile band.}
    \label{plotQ}
\end{figure}

\section{Fisher Information Bounds}
\label{FIMsec}

In this section we focus on FIM computations, bounding the pseudo inverse's trace. Our main result in this section is the following:

\begin{theorem}
\label{trFIM}
    Assume that $\boldsymbol{\mathscr{M}}\in\mathcal{S}_1(\beta,\alpha)$ with $\beta>0$, and let $\btheta$ denote its rank-one factors concatenated as a single vector (as in Section \ref{Manal}). Under the Poisson noise model
    \[
   \frac{\beta(I-1)N}{\alpha^{N-1}I^{N-1}} \leq \emph{tr}\left(\mathcal{I}_{\boldsymbol{\theta}}^{\dagger}\right) \leq \frac{\alpha\left(1+(I-1)N\right)}{\beta^{N-1}I^{N-1}}.
    \]
\end{theorem}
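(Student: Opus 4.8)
The plan is to exploit the block structure of the Poisson Fisher information matrix and reduce it, via a carefully chosen congruence, to a matrix whose spectrum is completely explicit, and only then pass to the pseudo-inverse trace.

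First I would write $\mathcal{I}_{\btheta}$ in the $N\times N$ block form induced by the factors $\textbf{u}^{(1)},\ldots,\textbf{u}^{(N)}$. Using the Poisson identity $\mathcal{I}_{\btheta}=\sum_{\textbf{i}} m_{\textbf{i}}^{-1}(\partial m_{\textbf{i}}/\partial\btheta)(\partial m_{\textbf{i}}/\partial\btheta)^{\top}$ together with $\partial m_{\textbf{i}}/\partial u_j^{(n)}=(m_{\textbf{i}}/u_{i_n}^{(n)})\mathbf{1}[i_n=j]$ (essentially Lemma 4 of \cite{PCPFIMpaper}), a short computation with $s_n:=\|\textbf{u}^{(n)}\|_1$ and $P:=\prod_l s_l$ shows that the $(n,n)$ diagonal block is the diagonal matrix $\tfrac{P}{s_n}\,\mathrm{diag}(u_j^{(n)})^{-1}$, while each off-diagonal block $(n,n')$ is the constant rank-one matrix $\tfrac{P}{s_n s_{n'}}\mathbf{1}\mathbf{1}^{\top}$. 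I would also record that the $N-1$ scaling directions $d_j^{(n)}=\epsilon_n u_j^{(n)}$ with $\sum_n\epsilon_n=0$ span the kernel, so that $\mathrm{rank}(\mathcal{I}_{\btheta})=NI-(N-1)$.

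Next I would apply the block-diagonal congruence $\Sigma=\mathrm{blockdiag}(\Sigma_n)$ with $\Sigma_n=\sqrt{s_n/P}\,\mathrm{diag}(u_j^{(n)})^{1/2}$, which whitens every diagonal block to the identity. Writing $\hat{\mathbf{a}}^{(n)}$ for the unit vector with entries $\sqrt{u_j^{(n)}/s_n}$, the off-diagonal blocks become $\hat{\mathbf{a}}^{(n)}(\hat{\mathbf{a}}^{(n')})^{\top}$, so that
\[
\tilde{\mathcal{I}}:=\Sigma^{\top}\mathcal{I}_{\btheta}\Sigma = Q + \mathbf{b}\mathbf{b}^{\top},
\]
where $Q=\mathrm{blockdiag}\big(I-\hat{\mathbf{a}}^{(n)}(\hat{\mathbf{a}}^{(n)})^{\top}\big)$ is an orthogonal projection of rank $N(I-1)$ and $\mathbf{b}$ is the stacked vector of the $\hat{\mathbf{a}}^{(n)}$, which lies in $\ker Q$ with $\|\mathbf{b}\|_2^2=N$. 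Because $Q\mathbf{b}=0$ the two terms act on orthogonal subspaces, so the spectrum of $\tilde{\mathcal{I}}$ is immediate: eigenvalue $1$ with multiplicity $N(I-1)$ on $\mathrm{range}(Q)$, eigenvalue $N$ once along $\mathbf{b}$, and $0$ with multiplicity $N-1$. Hence $\mathrm{tr}(\tilde{\mathcal{I}}^{\dagger})=N(I-1)+\tfrac1N$.

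The final step transfers this exact trace to $\mathcal{I}_{\btheta}=\Gamma^{\top}\tilde{\mathcal{I}}\Gamma$ with $\Gamma=\Sigma^{-1}$. Here I would invoke Ostrowski's quantitative form of Sylvester's law of inertia: ordering eigenvalues decreasingly, $\lambda_k(\mathcal{I}_{\btheta})=\theta_k\lambda_k(\tilde{\mathcal{I}})$ for some $\theta_k\in[\sigma_{\min}(\Gamma)^2,\sigma_{\max}(\Gamma)^2]$. Since $\theta_k>0$, the congruence preserves exactly which eigenvalues vanish, and summing reciprocals over the nonzero block gives $\sigma_{\max}(\Gamma)^{-2}\,\mathrm{tr}(\tilde{\mathcal{I}}^{\dagger})\le \mathrm{tr}(\mathcal{I}_{\btheta}^{\dagger})\le \sigma_{\min}(\Gamma)^{-2}\,\mathrm{tr}(\tilde{\mathcal{I}}^{\dagger})$. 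The squared singular values of the diagonal matrix $\Gamma$ are exactly $P/(s_n u_j^{(n)})=(\prod_{l\neq n}s_l)/u_j^{(n)}$, and the box constraints $u_j^{(n)}\in[\beta,\alpha]$ (hence $s_l\in[I\beta,I\alpha]$) give $\sigma_{\min}(\Gamma)^2\ge (I\beta)^{N-1}/\alpha$ and $\sigma_{\max}(\Gamma)^2\le (I\alpha)^{N-1}/\beta$. Substituting these, and using $N(I-1)\le N(I-1)+\tfrac1N\le N(I-1)+1$, yields precisely the claimed two-sided bound.

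The main obstacle is this last step: because $\mathcal{I}_{\btheta}$ is singular, the pseudo-inverse does \emph{not} transform covariantly under congruence ($\mathcal{I}_{\btheta}^{\dagger}\neq\Gamma^{-1}\tilde{\mathcal{I}}^{\dagger}\Gamma^{-\top}$ in general), so one cannot simply conjugate $\tilde{\mathcal{I}}^{\dagger}$. Ostrowski's theorem is exactly the tool that sidesteps this, giving a matched, order-preserving multiplicative comparison of the nonzero spectra while guaranteeing through inertia that the rank and the location of the kernel are preserved. The one bookkeeping check is that $\Sigma$ maps $\ker\tilde{\mathcal{I}}$ onto the scaling kernel of $\mathcal{I}_{\btheta}$, which follows since $\Sigma(\textbf{e}_n\otimes\hat{\mathbf{a}}^{(n)})=P^{-1/2}\,\textbf{u}^{(n)}$ in block $n$, confirming that the reciprocal sums run over matching index sets.
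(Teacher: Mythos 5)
Your proof is correct, and it reaches the stated two-sided bound by a genuinely different route than the paper. The paper normalizes the factors to equal $\ell_1$ norm, sandwiches the FIM in the Loewner order between surrogate matrices $\mathcal{F}_{\beta}$ and $\mathcal{F}_{\alpha}$ obtained by replacing each diagonal block $\mathrm{diag}(\textbf{u}^{(n)})^{-1}$ with $\beta^{-1}\textbf{I}$ or $\alpha^{-1}\textbf{I}$, and then---its key step---invokes a pseudo-inverse order-reversal result (Theorem 1 of \cite{pseudo}), which is only valid after checking that $\mathcal{I}_{\btheta}$ and the surrogates all share the rank $(I-1)N+1$ (cited from Theorem 4 of \cite{PCPFIMpaper}); explicit eigendecompositions of $\mathcal{F}_{\beta},\mathcal{F}_{\alpha}$ finish the computation. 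You instead whiten the diagonal blocks by an explicit diagonal congruence, recognize the whitened FIM as the projection-plus-rank-one matrix $Q+\mathbf{b}\mathbf{b}^{\top}$ with completely explicit spectrum (eigenvalue $1$ with multiplicity $N(I-1)$, eigenvalue $N$ once, eigenvalue $0$ with multiplicity $N-1$, hence the exact value $\mathrm{tr}(\tilde{\mathcal{I}}^{\dagger})=N(I-1)+\tfrac1N$), and transfer back through Ostrowski's quantitative inertia theorem, with the condition number of the diagonal scaling supplying the $\beta,\alpha$ dependence. Both arguments must confront the same pitfall---Loewner order and congruence do not interact naively with pseudo-inverses of singular matrices---and each resolves it with a different tool: equal-rank pseudo-inverse anti-monotonicity in the paper, matched-eigenvalue comparison with preserved inertia in yours; your explicit flagging of this issue is exactly right. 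Your route buys self-containedness (the rank $N(I-1)+1$ and the scaling kernel fall out of your own spectral computation rather than being cited), dispenses with the equal-$\ell_1$-norm normalization since unequal $s_n$ are handled for free, and produces the exact whitened trace; the paper's route is somewhat shorter given its two external lemmas. Your final constants agree exactly with the theorem: $\sigma_{\max}(\Gamma)^{-2}\left(N(I-1)+\tfrac1N\right)\geq \beta N(I-1)/(\alpha^{N-1}I^{N-1})$ and $\sigma_{\min}(\Gamma)^{-2}\left(N(I-1)+\tfrac1N\right)\leq \alpha\left(1+(I-1)N\right)/(\beta^{N-1}I^{N-1})$.
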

Ignoring terms that do not depend on $I$, this result establishes the showcased $\tr(\mathcal{I}_{\boldsymbol{\theta}}^{\dagger}) = \mathcal{O}(I^{2-N})$ in Theorem \ref{simpthm}. 

We now dedicate the remainder of the section for the proof of Theorem \ref{trFIM}. The proof will consist of bounding $\mathcal{F}_{\beta}^{\dagger}\preceq \mathcal{I}_{\boldsymbol{\theta}}^{\dagger} \preceq \mathcal{F}_{\alpha}^{\dagger}$, where $\mathcal{F}_{\beta}$ and $\mathcal{F}_{\alpha}$ are PSD matrices that are easier to analyze. The result will then follow by computing the trace of these matrices.

\begin{proof}[Proof of Theorem \ref{trFIM}]

Without loss of generality, we scale the factors so that
\[
\lambda \coloneqq \|\textbf{u}^{(1)}\|_1 = \|\textbf{u}^{(2)}\|_1 = \cdots = \|\textbf{u}^{(N)}\|_1
\]
which enforces equal factor sums (the factors are non-negative since $\beta>0$). Using Lemma 4 in \cite{PCPFIMpaper}, the FIM of $\boldsymbol{\theta}$ is given in terms of $I\times I$ blocks as
\[
\mathcal{I}_{\boldsymbol{\theta}}  = 
\lambda^{N-1}\begin{bmatrix}
\diag(\textbf{u}^{(1)})^{-1} &\lambda^{-1}\textbf{1}\textbf{1}^* &\dots& \lambda^{-1}\textbf{1}\textbf{1}^* \\
\lambda^{-1}\textbf{1}\textbf{1}^* &\diag(\textbf{u}^{(2)})^{-1} & \dots& \lambda^{-1}\textbf{1}\textbf{1}^* \\
\vdots & \vdots & \ddots & \vdots\\
\lambda^{-1}\textbf{1}\textbf{1}^* & \lambda^{-1}\textbf{1}\textbf{1}^* & \dots & \diag(\textbf{u}^{(N)})^{-1}
\end{bmatrix}\in\mathbb{R}^{NI\times NI},
\]
where $\textbf{1}\in \{1\}^{I}$ is the all-ones vector and diag$(\textbf{x})\in\mathbb{R}^{I\times I}$ is a diagonal matrix with $\textbf{x}\in\mathbb{R}^I$ in the diagonal. Furthermore, Theorem 4 in \cite{PCPFIMpaper} states that $\mathcal{I}_{\boldsymbol{\theta}}$ has rank $(I-1)N +1$ for any $\btheta$ with positive entries.

Let $\textbf{v}\in\mathbb{R}^{NI}$ be any vector. For $n\in \{1,\cdots,N\}$, let $C_n = \{k\in\mathbb{N} \ | \ (n-1)I+1\leq k\leq nI\}$. For fixed $n$, any entry $k\in C_n$ of $\mathcal{I}_{\boldsymbol{\theta}}\textbf{v}$ is given as
\[
\frac{\left(\mathcal{I}_{\boldsymbol{\theta}}\textbf{v}\right)_k}{\lambda^{N-1}} = \frac{v_k}{u_{\bar{k}}^{(n)}} + \sum_{q\in [N]/\{n\}}\lambda^{-1}\sum_{\ell\in C_q}v_{\ell},
\]
where $\bar{k} = k - (n-1)I$. Therefore
\begin{align*}
    &\frac{\textbf{v}^*\mathcal{I}_{\boldsymbol{\theta}}\textbf{v}}{\lambda^{N-1}} = \sum_{n=1}^{N}\sum_{k\in C_n}\frac{v_k\left(\mathcal{I}_{\boldsymbol{\theta}}\textbf{v}\right)_k}{\lambda^{N-1}} = \sum_{n=1}^{N}\sum_{k\in C_n}\left( 
\frac{v_k^2}{u_{\bar{k}}^{(n)}} + v_k\sum_{q\in [N]/\{n\}}\lambda^{-1}\sum_{\ell\in C_q}v_{\ell} \right) \\
&\leq \sum_{n=1}^{N}\sum_{k\in C_n}\left( 
\frac{v_k^2}{\beta} + v_k\sum_{q\in [N]/\{n\}}\lambda^{-1}\sum_{\ell\in C_q}v_{\ell} \right) \coloneqq \frac{\textbf{v}^*\mathcal{F}_{\beta}\textbf{v}}{\lambda^{N-1}},
\end{align*}
where we define
\[
\mathcal{F}_{\beta}  = 
\lambda^{N-1}\begin{bmatrix}
\beta^{-1}\textbf{I} &\lambda^{-1}\textbf{1}\textbf{1}^* &\dots& \lambda^{-1}\textbf{1}\textbf{1}^* \\
\lambda^{-1}\textbf{1}\textbf{1}^* &\beta^{-1}\textbf{I} & \dots& \lambda^{-1}\textbf{1}\textbf{1}^* \\
\vdots & \vdots & \ddots & \vdots\\
\lambda^{-1}\textbf{1}\textbf{1}^* & \lambda^{-1}\textbf{1}\textbf{1}^* & \dots & \beta^{-1}\textbf{I}
\end{bmatrix}
\]
and $\textbf{I}$ denotes the $I\times I$ identity matrix. We note that $\mathcal{F}_{\beta}$ is also of rank $(I-1)N+1$ according to Theorem 4 in \cite{PCPFIMpaper}.

So far, we have shown that
\[
\mathcal{I}_{\boldsymbol{\theta}} \preceq \mathcal{F}_{\beta}.
\]
Since these are of equal rank, Theorem 1 in \cite{pseudo} gives that
\[
\mathcal{F}_{\beta}^{\dagger} \preceq \mathcal{I}_{\boldsymbol{\theta}}^{\dagger}
\]
and therefore tr$(\mathcal{F}_{\beta}^{\dagger})\leq$ tr$(\mathcal{I}_{\boldsymbol{\theta}}^{\dagger})$ can be used as a lower bound. The matrix $\mathcal{F}_{\beta}$ is much simpler to study. Its eigenvectors are given as the columns of the following matrix (defined in terms of $I\times 1$ blocks):
\[
\Gamma = \begin{bmatrix}
\textbf{1} &[\textbf{e}_1 - \textbf{e}_2] &[\textbf{e}_1 + \textbf{e}_2 - 2\textbf{e}_3] &\dots& \big[\sum_{k=1}^{I-1}\textbf{e}_k - (I-1)\textbf{e}_{I}\big] & \textbf{0}&\dots \\
\textbf{1} &\textbf{0} &\textbf{0}& \dots& \textbf{0} & [\textbf{e}_1 - \textbf{e}_2]&\dots \\
\vdots & \vdots & \vdots & \vdots& \vdots& \vdots&\dots\\
\textbf{1} & \textbf{0} &\textbf{0}& \dots & \textbf{0} & \textbf{0}&\dots
\end{bmatrix}\in\mathbb{R}^{NI\times (I-1)N+1},
\]
where $\{\textbf{e}_j\}_{j=1}^{I}\subset\mathbb{R}^I$ denote the canonical basis vectors and the pattern above repeats $N$ times. The respective eigenvalues are
\[
\Lambda_{1} = \lambda^{N-1}\left(\frac{1}{\beta} + \frac{(N-1)I}{\lambda}\right) \ \ \mbox{and} \ \ \Lambda_{r} = \frac{\lambda^{N-1}}{\beta} \ \mbox{for} \ r\geq 2.
\]
Using the factor entry bound $\alpha$ gives
\[
\Lambda_{r} \leq \frac{\alpha^{N-1}I^{N-1}}{\beta}  \ \mbox{for} \ r\geq 2
\]
and we obtain
\[
\tr\left(\mathcal{I}_{\boldsymbol{\theta}}^{\dagger}\right) \geq \tr\left(\mathcal{F}_{\beta}^{\dagger}\right) = \sum_{r=1}^{(I-1)N+1}\frac{1}{\Lambda_r} > \sum_{r=2}^{(I-1)N+1}\frac{1}{\Lambda_r} = \frac{\beta(I-1)N}{\alpha^{N-1}I^{N-1}}.
\]

The upperbound is very similar. The matrix $\mathcal{F}_{\alpha}$, analogous to $\mathcal{F}_{\beta}$ (replacing $
\beta$'s with $\alpha$'s), can be shown to satisfy tr$(\mathcal{F}_{\alpha}^{\dagger})\geq$ tr$(\mathcal{I}_{\boldsymbol{\theta}}^{\dagger})$. This matrix has the same eigenvectors, and similar eigenvalues that replace $
\beta$'s with $\alpha$'s. We can show that
\[
\tr\left(\mathcal{I}_{\boldsymbol{\theta}}^{\dagger}\right) \leq \tr\left(\mathcal{F}_{\alpha}^{\dagger}\right) = 
\lambda^{1-N}\left(\frac{1}{\alpha} + \frac{(N-1)I}{\lambda}\right)^{-1} + \frac{\alpha(I-1)N}{\lambda^{N-1}}
\leq \frac{\alpha\left(1+(I-1)N\right)}{\beta^{N-1}I^{N-1}},
\]
which finishes the proof.

\end{proof}

\section{Mean Squared Error Bounds}
\label{MSEsec}

We now analyze the accuracy of the estimator $\widetilde{\boldsymbol{\mathscr{M}}}$ for parametric inference. This section will prove Theorem \ref{MSEthm}, and then modify the proof in the rank-one case to obtain the following corollary:

\begin{corollary}
\label{ZTPboundfac}
Assume $\boldsymbol{\mathscr{M}}\in\mathcal{S}_1(\beta,\alpha)$ and let $\widetilde{\boldsymbol{\mathscr{M}}}_1$ be given as in \eqref{maxlike} with rank-one factors $\{\widetilde{\textbf{u}}^{(n)}\}_{n=1}^{N}$. Scale the factors such that $\|\textbf{u}^{(1)}\|_2 = \cdots =\|\textbf{u}^{(N)}\|_2$ and $\|\widetilde{\textbf{u}}^{(1)}\|_2 = \cdots =\|\widetilde{\textbf{u}}^{(N)}\|_2$, and let $\btheta,\widetilde{\btheta}$ denote the factors concatenated as a vectors (resp.). There exists an absolute constant $C_3>0$ such that if
    \begin{equation}
    \label{Isize}
        IN^3\log^2(I) \geq \frac{(\beta^N+1)^2\beta^{2(N-1)}(\alpha-\beta)^2}{C_3(\alpha^N+1)^4} \ \ \ \mbox{and} \ \ \ C_1I\log(N)\geq \log(I),
    \end{equation}
where $C_1$ is as in Theorem \ref{MSEthm}, then
    \[
    \mathbb{E}\big\|\boldsymbol{\theta} - \widetilde{\boldsymbol{\theta}}\big\|_2^2 \leq \frac{C_3(\alpha^N+1)^4(N+\delta)^2N^2\log^2(I)}{(\beta^N+1)^2\beta^{2(N-1)}I^{N-2}}.
    \]
\end{corollary}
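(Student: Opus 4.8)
The plan is to obtain the factor-error bound from the parametric (tensor) error bound of Theorem~\ref{MSEthm} by establishing a deterministic stability estimate that converts closeness of rank-one tensors into closeness of their normalized factors. First I would specialize Theorem~\ref{MSEthm} to $R=1$. Since $\boldsymbol{\mathscr{M}}\in\mathcal{S}_1(\beta,\alpha)$ has factor entries in $[\beta,\alpha]$, its entries lie in $[\beta^N,\alpha^N]$, so the rank-$R$ search space specializes with $\tilde\beta=\beta^N$ and $\tilde\alpha=\alpha^N$. Substituting these together with $R=1$ into \eqref{msebound} gives, on an event of probability at least $1-I^{-\delta}-\exp(-C_1NI\log N)$,
\[
\big\|\widetilde{\boldsymbol{\mathscr{M}}}_1-\boldsymbol{\mathscr{M}}\big\|_F^2 \le \frac{C_2(\alpha^N+1)^4(N+\delta)^2\log^2(I)\,NI}{(\beta^N+1)^2}.
\]
This already produces every $\beta,\alpha,N,\delta,\log$ factor appearing in the claimed bound; the remaining work is purely to transfer this $F$-norm error on $\boldsymbol{\mathscr{M}}$ into an $\ell_2$ error on the concatenated factors $\boldsymbol{\theta}$.

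The heart of the argument is a factor-perturbation estimate. Writing $s:=\|\boldsymbol{\mathscr{M}}\|_F^{2/N}$ for the common squared factor norm under the equal-$\ell_2$-norm scaling (so $\|\textbf{u}^{(n)}\|_2^2=s$ for every $n$, and $s\ge\beta^2 I$ since each entry of $\boldsymbol{\mathscr{M}}$ is at least $\beta^N$), and setting $\textbf{h}^{(n)}:=\widetilde{\textbf{u}}^{(n)}-\textbf{u}^{(n)}$, I would expand the outer product multilinearly,
\[
\widetilde{\boldsymbol{\mathscr{M}}}_1-\boldsymbol{\mathscr{M}} = \underbrace{\sum_{n=1}^N \textbf{u}^{(1)}\circ\cdots\circ\textbf{h}^{(n)}\circ\cdots\circ\textbf{u}^{(N)}}_{=:\,D}\;+\;R,
\]
where $R$ collects the terms containing at least two $\textbf{h}$-factors. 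Because $\beta>0$ forces all factor entries to be positive, the rank-one representation is unique up to scale (no sign or permutation ambiguity), so after the equal-norm normalization the pairing of $\textbf{u}^{(n)}$ with $\widetilde{\textbf{u}}^{(n)}$ is unambiguous. A direct computation of $\|D\|_F^2$ yields the diagonal contribution $s^{N-1}\sum_n\|\textbf{h}^{(n)}\|_2^2$ plus cross terms $s^{N-2}\big[(\sum_n a_n)^2-\sum_n a_n^2\big]$ with $a_n:=\langle\textbf{u}^{(n)},\textbf{h}^{(n)}\rangle$; the equal-norm gauge makes the quantities $2a_n+\|\textbf{h}^{(n)}\|_2^2$ independent of $n$, which I would use to show the cross-term contribution is nonnegative, giving
\[
\|D\|_F^2 \ge s^{N-1}\,\big\|\boldsymbol{\theta}-\widetilde{\boldsymbol{\theta}}\big\|_2^2 .
\]
Combining with $s\ge\beta^2 I$ then produces the decisive $\beta^{2(N-1)}I^{N-1}$ denominator.

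The main obstacle is controlling the higher-order remainder $R$, since a priori the factor error is only bounded by the crude deterministic estimate from the bounded search space, $\|\textbf{h}^{(n)}\|_2\lesssim\sqrt{s}$. Each term of $R$ with $k\ge2$ perturbed modes has $F$-norm at most $s^{(N-k)/2}$ times a product of $\|\textbf{h}\|$'s, so with $\eta:=\max_n\|\textbf{h}^{(n)}\|_2$ one gets $\|R\|_F\le\|D\|_F\sum_{k\ge2}\binom{N}{k}(\eta/\sqrt{s})^{k-1}$, and this is a genuine perturbation only once $\eta/\sqrt{s}$ is of order $1/N^2$. I would close the gap via a bootstrap: feeding the crude bound into the specialized tensor estimate certifies that, on the good event, $\eta/\sqrt{s}$ is this small once $I$ is large, which is exactly where the first hypothesis in \eqref{Isize} enters; then $\|R\|_F\le\tfrac12\|D\|_F$, so $\|\widetilde{\boldsymbol{\mathscr{M}}}_1-\boldsymbol{\mathscr{M}}\|_F\ge\tfrac12\|D\|_F\ge\tfrac12 s^{(N-1)/2}\|\boldsymbol{\theta}-\widetilde{\boldsymbol{\theta}}\|_2$, i.e.
\[
\big\|\boldsymbol{\theta}-\widetilde{\boldsymbol{\theta}}\big\|_2^2 \le \frac{4}{\beta^{2(N-1)}I^{N-1}}\,\big\|\widetilde{\boldsymbol{\mathscr{M}}}_1-\boldsymbol{\mathscr{M}}\big\|_F^2 .
\]
Substituting the specialized tensor bound produces the stated rate, the extra $s^{1-N}$ accounting precisely for the change from the $NI$ factor to $N^2/(\beta^{2(N-1)}I^{N-2})$. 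Finally, to pass from the high-probability statement to the expectation I would split $\mathbb{E}\|\boldsymbol{\theta}-\widetilde{\boldsymbol{\theta}}\|_2^2$ over the good event and its complement, bounding the error on the complement by the deterministic $\mathcal{O}(N\alpha^2 I)$ estimate (valid since both factor sets have norms in $[\beta\sqrt I,\alpha\sqrt I]$) and the complement probability by $I^{-\delta}+\exp(-C_1NI\log N)$; the second condition in \eqref{Isize} ensures $\exp(-C_1NI\log N)\le I^{-N}$, so that after choosing $\delta$ (or integrating the tail in $\delta$) this residual contribution is dominated by the main term and absorbed into $C_3$.
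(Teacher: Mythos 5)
Your outer scaffolding matches the paper's proof: you specialize Theorem \ref{MSEthm} with $R=1$, $\tilde\beta=\beta^N$, $\tilde\alpha=\alpha^N$ exactly as the paper does (its bound \eqref{error1}), and your good-event/bad-event expectation conversion is morally the paper's Lemma \ref{expbdlemma} with range $L = NI(\alpha-\beta)^2$ and $\delta = N$ (note, though, that in the paper the first condition in \eqref{Isize} serves precisely this expectation step, not any perturbative smallness certification). The genuine gap is in your tensor-to-factor transfer. Your multilinear expansion $\widetilde{\boldsymbol{\mathscr{M}}}_1-\boldsymbol{\mathscr{M}} = D + R$ only yields $\|R\|_F \le \tfrac12\|D\|_F$ once $\eta/\sqrt{s} \lesssim N^{-2}$, and your proposed bootstrap cannot be initialized: the only a priori control on $\eta$ coming from the constraint set is $\eta \le (\alpha-\beta)\sqrt{I}$ together with $\sqrt{s} \ge \beta\sqrt{I}$, so $\eta/\sqrt{s} \le (\alpha-\beta)/\beta$ --- a constant that does not decay in $I$ and is not assumed small (the corollary is claimed for all $0<\beta<\alpha$). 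The inequality your expansion produces, roughly $\varepsilon_I \ge s^{(N-1)/2}\,\eta\,\bigl(1 - g(\eta/\sqrt{s})\bigr)$ with $g$ increasing, is satisfied both by small $\eta$ and by any $\eta$ large enough that $g \ge 1$, so taking $I$ large cannot by itself exclude the bad branch, and there is no continuity path along which to run a genuine bootstrap, since $\widetilde{\btheta}$ is a single constrained maximizer rather than an algorithmic iterate converging from a warm start. A secondary flaw: your claim that the equal-norm gauge makes the cross terms in $\|D\|_F^2$ nonnegative is false as stated --- the gauge gives $a_n = \tfrac12(\tilde{s}-s) - \tfrac12\|\textbf{h}^{(n)}\|_2^2$, which can have mixed signs when $\tilde{s} > s$, so $\sum_{n \ne m} a_n a_m$ need not be nonnegative; it is merely higher order in $\eta$, which again only helps inside the perturbative regime you cannot certify entering.

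The paper avoids all of this with a global, non-perturbative stability estimate (Lemma \ref{factorupperbound}): it expands $\|\boldsymbol{\mathscr{M}}-\widetilde{\boldsymbol{\mathscr{M}}}\|_F^2 = \prod_n\|\textbf{u}^{(n)}\|_2^2 + \prod_n\|\tilde{\textbf{u}}^{(n)}\|_2^2 - 2\prod_n\langle\textbf{u}^{(n)},\tilde{\textbf{u}}^{(n)}\rangle$, exploits positivity of the factors to get $\langle\textbf{u}^{(k)},\tilde{\textbf{u}}^{(k)}\rangle \ge \beta^2 I$, and combines the equal-norm gauge with the WLOG ordering $\|\widetilde{\boldsymbol{\mathscr{M}}}\|_F \le \|\boldsymbol{\mathscr{M}}\|_F$ and the elementary inequality $x/y + y/x \ge 2$; this yields $\|\textbf{u}^{(n)}-\tilde{\textbf{u}}^{(n)}\|_2^2 \le \epsilon/(\beta^{2(N-1)}I^{N-1})$ with no smallness hypothesis whatsoever. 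If you replace your expansion-plus-bootstrap with such a global argument (or, alternatively, a Wedin-type perturbation bound applied to the rank-one mode unfoldings, whose relevant singular value is $\|\boldsymbol{\mathscr{M}}\|_F$, handling scale separately via the gauge), the remainder of your proposal goes through essentially as in the paper.
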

This corollary is used to obtain the simplified statement $\mathbb{E}\big\|\widetilde{\btheta}-\btheta\big\|_2^2 = \mathcal{O}\left(\frac{\log^2(I)}{I^{N-2}}\right)$ in Theorem \ref{simpthm}, which treats terms that do not depend on $I$ as a constant. We first provide the proof of Theorem \ref{MSEthm}, and then specify the changes needed to obtain Corollary \ref{ZTPboundfac}.

\begin{proof}[Proof of Theorem \ref{MSEthm}]
We will follow the proof of Theorem 1 in \cite{minimaxbin}. By second-order Taylor's theorem, for any $\boldsymbol{\mathscr{T}}\in\mathcal{S}_R(\tilde{\beta},\tilde{\alpha})$ there exists some $\widehat{\boldsymbol{\mathscr{M}}} = \gamma \boldsymbol{\mathscr{M}} + (1-\gamma)\boldsymbol{\mathscr{T}}$ with $\gamma \in [0,1]$ such that
\begin{equation}
\label{expansion}
    \mathcal{L}_{\boldsymbol{\mathscr{X}}}(\boldsymbol{\mathscr{T}}) = \mathcal{L}_{\boldsymbol{\mathscr{X}}}(\boldsymbol{\mathscr{M}}) + \langle \boldsymbol{\mathscr{S}}_{\boldsymbol{\mathscr{X}}}(\boldsymbol{\mathscr{M}}),\boldsymbol{\mathscr{T}}-\boldsymbol{\mathscr{M}}\rangle + \frac{1}{2}\mbox{vec}\left(\boldsymbol{\mathscr{T}}-\boldsymbol{\mathscr{M}}\right)^*H_{\boldsymbol{\mathscr{X}}}(\widehat{\boldsymbol{\mathscr{M}}})\mbox{vec}\left(\boldsymbol{\mathscr{T}}-\boldsymbol{\mathscr{M}}\right).
\end{equation}
Above, $\boldsymbol{\mathscr{S}}_{\boldsymbol{\mathscr{X}}}(\boldsymbol{\mathscr{M}})\in\mathbb{R}^{I\times\cdots\times I}$ is the collection of the score functions evaluated at $\boldsymbol{\mathscr{M}}$ whose entry in the multi-index $\textbf{i}$ is given as
\[
\frac{\partial \mathcal{L}_{\boldsymbol{\mathscr{X}}}(\boldsymbol{\mathscr{M}})}{\partial m_\textbf{i}} = \frac{x_\textbf{i}+1}{m_\textbf{i}+1} - 1.
\]
$H_{\boldsymbol{\mathscr{X}}}(\widehat{\boldsymbol{\mathscr{M}}}) \in\mathbb{R}^{I^N\times I^N}$ is the collection of the Hessian functions evaluated at $\widehat{\boldsymbol{\mathscr{M}}}$. Identifying each $k\in \{1,\cdots,I^N\}$ with a unique multi-index $\textbf{i}_k\in \{1,\cdots,I\}\times\cdots\times\{1,\cdots,I\}$, the entry $(k,\ell)$ of $H_{\boldsymbol{\mathscr{X}}}(\widehat{\boldsymbol{\mathscr{M}}})$ is given as 
\[
\frac{\partial^2 \mathcal{L}_{\boldsymbol{\mathscr{X}}}(\widehat{\boldsymbol{\mathscr{M}}})}{\partial \hat{m}_{\textbf{i}_k}\partial \hat{m}_{\textbf{i}_{\ell}}} = \left\{
     \begin{array}{lr}
       -\frac{x_\textbf{i}+1}{(\hat{m}_\textbf{i}+1)^2}  &\mbox{if} \ k=\ell \ \ (\textbf{i} = \textbf{i}_k = \textbf{i}_{\ell}) \\
       0 &\mbox{if} \ k\neq\ell. \\

     \end{array}
   \right.
\]
We will bound certain terms in \eqref{expansion} and rearrange to obtain our result. Our shifted version of the Poisson loglikelihood gives that
\begin{equation}
    \label{hession}\mbox{vec}\left(\boldsymbol{\mathscr{T}}-\boldsymbol{\mathscr{M}}\right)^*H_{\boldsymbol{\mathscr{X}}}(\widehat{\boldsymbol{\mathscr{M}}})\mbox{vec}\left(\boldsymbol{\mathscr{T}}-\boldsymbol{\mathscr{M}}\right) = -\sum_{\textbf{i}}\frac{x_\textbf{i}+1}{(\hat{m}_\textbf{i}+1)^2}\left(t_{\textbf{i}}-m_{\textbf{i}}\right)^2 \leq \frac{-1}{(\tilde{\alpha}+1)^2}\|\boldsymbol{\mathscr{T}}-\boldsymbol{\mathscr{M}}\|_F^2.
\end{equation}
To bound the linear term in \eqref{expansion}, we use
\[
\langle \boldsymbol{\mathscr{S}}_{\boldsymbol{\mathscr{X}}}(\boldsymbol{\mathscr{M}}),\boldsymbol{\mathscr{T}}-\boldsymbol{\mathscr{M}}\rangle \leq \| \boldsymbol{\mathscr{S}}_{\boldsymbol{\mathscr{X}}}(\boldsymbol{\mathscr{M}})\|\|\boldsymbol{\mathscr{T}}-\boldsymbol{\mathscr{M}}\|_*
\]
where $\|\circ\|$ is the tensor spectral norm and $\|\circ\|_*$ is its dual norm (the tensor nuclear norm), see Lemma 1 in \cite{minimaxbin}. Using Theorem \ref{rankRthm} in the next section we obtain
\begin{equation}
\label{nuclear}
    \|\boldsymbol{\mathscr{T}}-\boldsymbol{\mathscr{M}}\|_* \leq 2R\|\boldsymbol{\mathscr{T}}-\boldsymbol{\mathscr{M}}\|_F,
\end{equation}
since $\boldsymbol{\mathscr{T}}-\boldsymbol{\mathscr{M}}$ is at most rank $2R$. To bound $\| \boldsymbol{\mathscr{S}}_{\boldsymbol{\mathscr{X}}}(\boldsymbol{\mathscr{M}})\|$ we use Lemma 6 in \cite{minimaxbin}. Consider any $\delta>0$, then the entries of $\boldsymbol{\mathscr{S}}_{\boldsymbol{\mathscr{X}}}(\boldsymbol{\mathscr{M}})$ are independent, centered, and satisfy
\[
\max_{\textbf{i}}\bigg|\frac{\partial \mathcal{L}_{\boldsymbol{\mathscr{X}}}(\boldsymbol{\mathscr{M}})}{\partial m_\textbf{i}}\bigg| \leq \max_{\textbf{i}}\frac{(N+\delta)\log(I)+m_\textbf{i}+1}{m_\textbf{i}+1} - 1 \leq \frac{(N+\delta)\log(I)}{\tilde{\beta}+1}.
\]
The first inequality above holds with probability exceeding $1-I^{-\delta}$ and can be shown using Lemma 1 in \cite{PMC}, which gives that $x_\textbf{i} \geq m_\textbf{i} + t$ with probability less that $e^{-t}$ for $t\geq \tilde{\alpha}(e^2-3)$. If $(N+\delta)\log(I)\geq \tilde{\alpha}(e^2-3)$, we may choose $t = (N+\delta)\log(I)$ and obtain that $x_\textbf{i} \geq m_\textbf{i} + t$ for all $\textbf{i}$ with probability less than $I^Ne^{-(N+\delta)\log(I)} = I^{-\delta}$ by a union bound.

 Combining this bound with Lemma 6 in \cite{minimaxbin}, there exists some absolute constants $c_1,c_2>0$ such that with probability at least $1-I^{-\delta}-\exp(-c_1NI\log(N))$
\begin{equation}
    \label{spectral}
    \| \boldsymbol{\mathscr{S}}_{\boldsymbol{\mathscr{X}}}(\boldsymbol{\mathscr{M}})\| \leq c_2\frac{(N+\delta)\log(I)}{\tilde{\beta}+1}\sqrt{NI}.
\end{equation}

Finally, choosing $\boldsymbol{\mathscr{T}} = \widetilde{\boldsymbol{\mathscr{M}}}$ in \eqref{expansion}, since $\mathcal{L}_{\boldsymbol{\mathscr{X}}}(\widetilde{\boldsymbol{\mathscr{M}}}) \geq \mathcal{L}_{\boldsymbol{\mathscr{X}}}(\boldsymbol{\mathscr{M}})$ if we apply \eqref{hession}, \eqref{nuclear}, \eqref{spectral}, and rearrange we have shown that
\begin{equation}
\label{error}
    \big\|\widetilde{\boldsymbol{\mathscr{M}}}-\boldsymbol{\mathscr{M}}\big\|_F \leq \frac{4(\tilde{\alpha}+1)^2c_2(N+\delta)\log(I)R\sqrt{NI}}{\tilde{\beta}+1},
\end{equation}
holds with the prescribed probability. The main statement defines $C_1,C_2$ to absorb $c_1,c_2$ along with other absolute constants.
\end{proof}

We now proceed to the proof of Corollary \ref{ZTPboundfac}. The beginning of the proof is identical to the proof of Theorem \ref{MSEthm}, with a slight modification to obtain a bound analogous to \eqref{error}. Lemma \ref{factorupperbound} from the next section will then translate the derived parameter tensor error \eqref{error} to an error on the CP factors.

\begin{proof}[Proof of Corollary \ref{ZTPboundfac}]

Mimicking the proof of Theorem \ref{MSEthm}, the main difference in the rank-one setting are the bounds imposed on the factors in the search space \eqref{S1}. If $\boldsymbol{\mathscr{T}}\in\mathcal{S}_1(\beta,\alpha)$, notice that $\beta^N\leq t_{\textbf{i}}\leq\alpha^N$. Setting $R=1$, $\tilde{\beta} = \beta^N$, and $\tilde{\alpha} = \alpha^N$ in the definition of $\mathcal{S}_R(\tilde{\beta},\tilde{\alpha})$, we can follow the proof of Theorem \ref{MSEthm} to conclude
\begin{equation}
\label{error1}
    \big\|\widetilde{\boldsymbol{\mathscr{M}}}_1-\boldsymbol{\mathscr{M}}\big\|_F \leq \frac{C_2(\alpha^N+1)^2(N+\delta)\log(I)\sqrt{NI}}{\beta^N+1}
\end{equation}
with probability exceeding $1-I^{-\delta}-\exp(-C_1NI\log(N))$.

By Lemma \ref{factorupperbound}, if we set $\|\textbf{u}^{(1)}\|_2 =  \cdots = \|\textbf{u}^{(N)}\|_2$ and $\|\widetilde{\textbf{u}}^{(1)}\|_2 =  \cdots = \|\widetilde{\textbf{u}}^{(N)}\|_2$ we can conclude
\begin{equation*}
    \big\|\boldsymbol{\theta} - \widetilde{\boldsymbol{\theta}}\big\|_2^2 = \sum_{n=1}^{N}\| \textbf{u}^{(n)} - \tilde{\textbf{u}}^{(n)} \|_2^2 \leq N\frac{C_2^2(\alpha^N+1)^4(N+\delta)^2N\log^2(I)}{(\beta^N+1)^2\beta^{2(N-1)}I^{N-2}} \coloneqq \kappa_{\delta}
\end{equation*}
with probability exceeding $1-I^{-\delta}-\exp(-C_1NI\log(N))$. To bound the expected value, we use Lemma \ref{expbdlemma}. Treat $y = \big\|\boldsymbol{\theta} - \widetilde{\boldsymbol{\theta}}\big\|_2^2$ as a random variable, and notice that $y\in [0,NI(\alpha-\beta)^2]$. We have shown
\[
\mathbb{P}\left(y > \kappa_{\delta}\right) \leq I^{-\delta}+\exp(-C_1NI\log(N))
\]
so that by Lemma \ref{expbdlemma} we will obtain $\EX y \leq 2\kappa_{\delta}$ if
\begin{equation}
\label{expineq}
    I^{-\delta}+\exp(-C_1NI\log(N)) \leq \frac{\kappa_{\delta}}{NI(\alpha-\beta)^2}.
\end{equation}
Under our assumptions, $C_1NI\log(N)\geq \delta\log(I)$ holds with $\delta = N$ and we have that $\exp(-C_1NI\log(N)) \leq I^{-\delta}$. Therefore \eqref{expineq} holds with $\delta = N$ if
\[
I \geq \frac{2(\beta^N+1)^2\beta^{2(N-1)}(\alpha-\beta)^2}{C_2^2(\alpha^N+1)^4(2N)^2N\log^2(I)}.
\]
The final statement defines $C_3 = 2C_2^2$.
\end{proof}

\section{CRLB Proof}
\label{CRLBproof}

In this section we state and prove a general version of Theorem \ref{CRLBsimp}, which allows for biased estimators. For the following result, we refer the reader to the setting and definitions of Section \ref{CRLBsec}.

\begin{theorem}
\label{CRLB}
Consider an estimator $\hat\btheta(\bx)$ of $\btheta$ (not necessarily unbiased). Otherwise, under the conditions of Theorem \ref{CRLBsimp},
$$
\left(\dfrac{\partial}{\partial \btheta} \mathbb{E}\hat\btheta\right)\mathcal{I}_{\btheta}^{\dagger}\left(\dfrac{\partial}{\partial \btheta} \mathbb{E}\hat\btheta\right)^{\top}\preceq \emph{Var}\left(\hat\btheta\right),
$$
where 
$$
\emph{Var}\left(\hat\btheta\right) \coloneqq \EX\Big[\left(\hat\btheta - \EX\hat\btheta\right)\left(\hat\btheta - \EX\hat\btheta\right)^{\top}\Big]
$$ 
is the covariance matrix.
\end{theorem}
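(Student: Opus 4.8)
The plan is to run the classical covariance-based derivation of the Cram\'er--Rao bound, but with the Moore--Penrose pseudo-inverse in place of the ordinary inverse so as to accommodate the rank-deficient Fisher information (recall $\mathcal{I}_{\btheta}$ has rank $(I-1)N+1<NI$ in the tensor setting, cf.\ Theorem \ref{trFIM}, so inverting it is not an option). Define the score $\boldsymbol{s}(\bx;\btheta)\coloneqq \frac{\partial}{\partial\btheta}\log f(\bx;\btheta)\in\mathbb{R}^p$. First I would record the two standard moment identities. Differentiating $\int_{\mathbb{R}^n} f(\bchi;\btheta)\,d\bchi=1$ and interchanging derivative and integral (the regularity hypothesis carried over from Theorem \ref{CRLBsimp}) gives $\EX\boldsymbol{s}=\int \frac{\partial f}{\partial\btheta}\,d\bchi=\boldsymbol{0}$, whence $\mathcal{I}_{\btheta}=\EX[\boldsymbol{s}\boldsymbol{s}^{\top}]=\mathrm{Var}(\boldsymbol{s})$.

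Next I would compute the cross-covariance between the estimator and the score. Writing $D\coloneqq \frac{\partial}{\partial\btheta}\EX\hat\btheta$ for the Jacobian of the estimator's mean, the interchange assumption applied to $\int \hat\btheta(\bchi)f(\bchi;\btheta)\,d\bchi$ yields $D=\int \hat\btheta\big(\frac{\partial f}{\partial\btheta}\big)^{\top}d\bchi=\EX\!\big[\hat\btheta\,\boldsymbol{s}^{\top}\big]$, using $\frac{\partial f}{\partial\btheta}=\big(\frac{\partial}{\partial\btheta}\log f\big)f$. Because $\EX\boldsymbol{s}=\boldsymbol{0}$, the centered cross-covariance is identical: $\EX\!\big[(\hat\btheta-\EX\hat\btheta)\boldsymbol{s}^{\top}\big]=D$. (The regularity hypothesis of Theorem \ref{CRLBsimp}, applied to $[\hat\btheta-\btheta]f$ together with $\EX\boldsymbol{s}=\boldsymbol{0}$, supplies exactly this interchange in the biased case.)

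The crux is a ``completing the square'' step that builds the pseudo-inverse directly into the estimate. I would introduce the auxiliary vector $\boldsymbol{w}\coloneqq (\hat\btheta-\EX\hat\btheta)-D\,\mathcal{I}_{\btheta}^{\dagger}\,\boldsymbol{s}$ and expand its covariance $\mathrm{Var}(\boldsymbol{w})=\EX[\boldsymbol{w}\boldsymbol{w}^{\top}]\succeq 0$. Substituting $\mathrm{Var}(\hat\btheta)$, $\EX[(\hat\btheta-\EX\hat\btheta)\boldsymbol{s}^{\top}]=D$, $\mathrm{Var}(\boldsymbol{s})=\mathcal{I}_{\btheta}$, and the symmetry of $\mathcal{I}_{\btheta}^{\dagger}$, the four terms collapse to
\[
\mathrm{Var}(\boldsymbol{w})=\mathrm{Var}(\hat\btheta)-D\mathcal{I}_{\btheta}^{\dagger}D^{\top}-D\mathcal{I}_{\btheta}^{\dagger}D^{\top}+D\,\mathcal{I}_{\btheta}^{\dagger}\mathcal{I}_{\btheta}\mathcal{I}_{\btheta}^{\dagger}\,D^{\top},
\]
and the Moore--Penrose identity $\mathcal{I}_{\btheta}^{\dagger}\mathcal{I}_{\btheta}\mathcal{I}_{\btheta}^{\dagger}=\mathcal{I}_{\btheta}^{\dagger}$ reduces the last term, leaving $\mathrm{Var}(\boldsymbol{w})=\mathrm{Var}(\hat\btheta)-D\mathcal{I}_{\btheta}^{\dagger}D^{\top}$. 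Since $\mathrm{Var}(\boldsymbol{w})\succeq 0$, this is precisely the claimed inequality $D\mathcal{I}_{\btheta}^{\dagger}D^{\top}\preceq \mathrm{Var}(\hat\btheta)$.

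The one genuinely non-classical point — and the place I expect the only real subtlety — is handling the singularity of $\mathcal{I}_{\btheta}$ without ever inverting it. The virtue of the choice $L=D\mathcal{I}_{\btheta}^{\dagger}$ in $\boldsymbol{w}=(\hat\btheta-\EX\hat\btheta)-L\boldsymbol{s}$ is that the cross terms cancel using only $\mathcal{I}_{\btheta}^{\dagger}\mathcal{I}_{\btheta}\mathcal{I}_{\btheta}^{\dagger}=\mathcal{I}_{\btheta}^{\dagger}$, so no range/column-space condition on $D$ must be verified separately. Equivalently, one could assemble the joint covariance $\begin{bmatrix}\mathrm{Var}(\hat\btheta) & D\\ D^{\top} & \mathcal{I}_{\btheta}\end{bmatrix}\succeq 0$ and invoke the generalized Schur complement for PSD block matrices, where the condition $\mathrm{range}(D)\subseteq\mathrm{range}(\mathcal{I}_{\btheta})$ holds automatically because the block matrix is a genuine covariance; I would lead with the completing-the-square argument since it is self-contained. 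Specializing to unbiased $\hat\btheta$ gives $D=\frac{\partial}{\partial\btheta}\btheta=\mathrm{Id}$ and recovers Theorem \ref{CRLBsimp}.
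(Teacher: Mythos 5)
Your proof is correct and takes essentially the same route as the paper: both rest on the cross-moment identity $\EX\big[(\hat\btheta-\EX\hat\btheta)\,\boldsymbol{s}^{\top}\big]=\frac{\partial}{\partial \btheta}\EX\hat\btheta$ (with $\boldsymbol{s}$ the score) derived from the interchange hypothesis, followed by the pseudo-inverse matrix Cauchy--Schwarz inequality applied to the pair consisting of the centered estimator and the score. The only difference is presentational: the paper invokes that inequality as a black box (Theorem \ref{tripathy}, due to Tripathi), whereas you re-derive exactly that inequality inline via the completing-the-square vector $\boldsymbol{w}=(\hat\btheta-\EX\hat\btheta)-\big(\frac{\partial}{\partial \btheta}\EX\hat\btheta\big)\mathcal{I}_{\btheta}^{\dagger}\boldsymbol{s}$ and the Moore--Penrose identity $\mathcal{I}_{\btheta}^{\dagger}\mathcal{I}_{\btheta}\mathcal{I}_{\btheta}^{\dagger}=\mathcal{I}_{\btheta}^{\dagger}$, which makes the argument self-contained (and, as you correctly observe, sidesteps any range condition on the cross-covariance).
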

In contrast to Theorem \ref{CRLBsimp}, this general CRLB includes terms that consider the bias of $\hat\btheta$. In the unbiased case, notice that $\EX\hat\btheta=\btheta$ so that $\dfrac{\partial}{\partial \btheta} \mathbb{E}\hat\btheta$ is the $p\times p$ identity matrix and Theorem \ref{CRLBsimp} follows. The main novelty of Theorem \ref{CRLB} is our the proof essentially shows that the CRLB is a consequence of the generalized Cauchy-Schwarz inequality.
\begin{theorem}[Theorem 1 in \cite{tripathy2}, article C467]
    \label{tripathy}
    Let $\ba\in\mathbb{R}^{p_1}$ and $\bb\in\mathbb{R}^{p_2}$ be random vectors satisfying $\mathbb{E}\|\ba\|_2<\infty$ and $\mathbb{E}\|\bb\|_2<\infty$. Then:
$$
\mathbb{E}\ba\bb^{\top}\left(
\mathbb{E}\bb\bb^{\top}\right)^{\dagger}
\mathbb{E}\bb\ba^{\top}\preceq \mathbb{E}(\ba\ba^{\top}).
$$
\end{theorem}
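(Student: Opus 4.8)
The plan is to derive Theorem \ref{CRLB} as a one-step instantiation of the generalized Cauchy--Schwarz inequality, Theorem \ref{tripathy}. I would apply that result with the two random vectors $\ba = \hat\btheta - \mathbb{E}\hat\btheta \in \mathbb{R}^p$ (the centered estimator) and $\bb = \frac{\partial}{\partial\btheta}\log f(\bx;\btheta) \in \mathbb{R}^p$ (the score vector). With these choices the two ``diagonal'' expectations appearing in Theorem \ref{tripathy} are immediately recognizable: $\mathbb{E}[\ba\ba^\top] = \mathrm{Var}(\hat\btheta)$ by the definition of the covariance matrix stated in the theorem, and $\mathbb{E}[\bb\bb^\top] = \mathcal{I}_{\btheta}$ by the definition of the FIM in Theorem \ref{CRLBsimp}. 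The finiteness requirements $\mathbb{E}\|\ba\|_2 < \infty$ and $\mathbb{E}\|\bb\|_2 < \infty$ demanded by Theorem \ref{tripathy} are covered by the standing regularity assumptions (finite covariance and finite Fisher information).

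The crux is to identify the cross term $\mathbb{E}[\ba\bb^\top]$ with the Jacobian $\frac{\partial}{\partial\btheta}\mathbb{E}\hat\btheta$. First I would record the standard regularity fact that the score is centered, $\mathbb{E}[\bb] = \int \frac{\partial}{\partial\btheta}\log f \cdot f\, d\bx = \int \frac{\partial f}{\partial\btheta}\, d\bx = \frac{\partial}{\partial\btheta}\int f\, d\bx = 0$, which is exactly the interchange of derivative and integral applied to the normalization $\int f\, d\bx = 1$. This zero-mean property is what allows centering $\hat\btheta$ by its mean rather than by $\btheta$: it kills the contribution $\mathbb{E}\hat\btheta\,\mathbb{E}[\bb^\top]$, leaving $\mathbb{E}[\ba\bb^\top] = \mathbb{E}[\hat\btheta\,\bb^\top]$. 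Writing the $(i,j)$ entry and using $b_j f = \partial f/\partial\theta_j$ gives $\mathbb{E}[\hat\theta_i b_j] = \int \hat\theta_i \frac{\partial f}{\partial\theta_j}\, d\bx$, which by the interchange hypothesis of Theorem \ref{CRLBsimp} equals $\frac{\partial}{\partial\theta_j}\int\hat\theta_i f\, d\bx = \frac{\partial}{\partial\theta_j}\mathbb{E}\hat\theta_i$. Reassembling the entries yields $\mathbb{E}[\ba\bb^\top] = \frac{\partial}{\partial\btheta}\mathbb{E}\hat\btheta$, and transposing gives $\mathbb{E}[\bb\ba^\top] = \left(\frac{\partial}{\partial\btheta}\mathbb{E}\hat\btheta\right)^\top$.

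Substituting these three identifications into the inequality $\mathbb{E}[\ba\bb^\top]\left(\mathbb{E}[\bb\bb^\top]\right)^\dagger\mathbb{E}[\bb\ba^\top] \preceq \mathbb{E}[\ba\ba^\top]$ of Theorem \ref{tripathy} produces precisely
$$
\left(\frac{\partial}{\partial\btheta}\mathbb{E}\hat\btheta\right)\mathcal{I}_{\btheta}^\dagger\left(\frac{\partial}{\partial\btheta}\mathbb{E}\hat\btheta\right)^\top \preceq \mathrm{Var}(\hat\btheta),
$$
which is the claim. To close the loop with Theorem \ref{CRLBsimp}, I would remark that unbiasedness $\mathbb{E}\hat\btheta = \btheta$ makes the Jacobian $\frac{\partial}{\partial\btheta}\mathbb{E}\hat\btheta$ the $p\times p$ identity, collapsing the left-hand side to $\mathcal{I}_{\btheta}^\dagger$ and recovering the unbiased bound as a special case.

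I expect the main obstacle to be the careful bookkeeping in the cross-term identity: justifying the exchange of differentiation and integration through the stated hypothesis, confirming the zero-mean property of the score so that centering by $\mathbb{E}\hat\btheta$ (rather than by $\btheta$) is legitimate in the biased setting, and tracking index and transpose conventions so that $\mathbb{E}[\ba\bb^\top]$ matches the Jacobian itself rather than its transpose. Once the cross term is pinned down, the inequality follows by a direct substitution into Theorem \ref{tripathy}.
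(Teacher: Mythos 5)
You have proved the wrong statement. The theorem you were assigned is the generalized matrix Cauchy--Schwarz inequality (Theorem \ref{tripathy}) itself, but your proposal invokes that inequality as a black box in its very first step and instead derives Theorem \ref{CRLB} from it. That derivation is, in substance, exactly the paper's own proof of Theorem \ref{CRLB} in Section \ref{CRLBproof} --- the same choices $\ba = \hat\btheta - \EX\hat\btheta$ and $\bb = \frac{\partial}{\partial\btheta}\log f(\bx;\btheta)$, the same use of the zero-mean score, and the same interchange-of-derivative-and-integral argument identifying $\EX[\ba\bb^{\top}]$ with the Jacobian $\frac{\partial}{\partial\btheta}\EX\hat\btheta$ --- so as a proof of the CRLB it is sound; but as a proof of Theorem \ref{tripathy} it is circular: nothing in it establishes $\EX\ba\bb^{\top}(\EX\bb\bb^{\top})^{\dagger}\EX\bb\ba^{\top} \preceq \EX\ba\ba^{\top}$. (Note the paper does not prove this statement either; it imports it from \cite{tripathy2}, remarking that \cite{tripathi99} proved the version with an ordinary inverse.)

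The missing proof is short, and you should supply it. Set $A = \EX\ba\ba^{\top}$, $B = \EX\bb\bb^{\top}$, $C = \EX\ba\bb^{\top}$, and expand $0 \preceq \EX[(\ba - M\bb)(\ba - M\bb)^{\top}] = A - MC^{\top} - CM^{\top} + MBM^{\top}$ with the choice $M = CB^{\dagger}$. Since $B$ is symmetric PSD, $B^{\dagger}$ is symmetric and $B^{\dagger}BB^{\dagger} = B^{\dagger}$, so all three trailing terms equal $CB^{\dagger}C^{\top}$ and the display collapses to $A - CB^{\dagger}C^{\top} \succeq 0$, which is the claim. The one pseudo-inverse subtlety worth recording (this is the content beyond the classical version in \cite{tripathi99}) is that the null space of $B$ causes no loss: if $B\bv = 0$ then $\EX(\bb^{\top}\bv)^2 = \bv^{\top}B\bv = 0$, so $\bb^{\top}\bv = 0$ almost surely and hence $C\bv = 0$; thus the projection $B^{\dagger}B$ implicitly absorbed by $M$ discards nothing of $C$. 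Finally, be aware that the hypotheses as stated, $\EX\|\ba\|_2 < \infty$ and $\EX\|\bb\|_2 < \infty$, control only first moments, whereas the matrices $A$, $B$, $C$ in the inequality require second moments, $\EX\|\ba\|_2^2 < \infty$ and $\EX\|\bb\|_2^2 < \infty$ (the cross term then exists by Cauchy--Schwarz); a self-contained proof should assume these.
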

The original matrix Cauchy-Schwarz inequality was shown in \cite{tripathi99} and Theorem \ref{tripathy} extends the result to include the generalized inverse. We now proceed to the proof of our CRLB. The proof of Theorem \ref{CRLB} will use the matrix Cauchy-Schwarz inequality, followed by assumptions on the PDF.
\begin{proof}[Proof of Theorem \ref{CRLB}]
Apply Theorem \ref{tripathy} with $\bb =\dfrac{\partial}{\partial \btheta}\log f(\bx;\btheta)$ and $\ba =  \hat\btheta -  \mathbb{E}\hat\btheta$. Then:

\begin{enumerate}
    \item 
    $
    \mathbb{E}\ba\ba^{\top} = \mbox{Var}\left(\hat\btheta\right),
    $
    \item 
    $
    \mathbb{E}\bb\bb^{\top} = \mathcal{I}_{\btheta},
    $
    \item 
    $
    \mathbb{E}\ba\bb^{\top} = \EX\left[  \left(\hat\btheta - \EX\hat\btheta\right)\left(\dfrac{\partial}{\partial \btheta}\log f(\bx;\btheta)\right)^{\top}\right] = \dfrac{\partial}{\partial \btheta} \mathbb{E}\hat\btheta
    $,
\end{enumerate}
where the last equality can be shown using the PDF assumptions from Theorem \ref{CRLBsimp}:
\begin{equation*}
        \begin{split}
             0 = & \dfrac{\partial}{\partial \btheta}\EX
[\hat\btheta - \EX\hat\btheta]
\\=&
             \dfrac{\partial}{\partial \btheta} \int_{\mathbb{R}^n} [\hat\btheta(\bchi) - \EX\hat\btheta] f(\bchi;\btheta) d\bchi
             \\=&              \int_{\mathbb{R}^n}\dfrac{\partial}{\partial \btheta}  \left([\hat\btheta(\bchi) - \EX\hat\btheta] f(\bchi;\btheta)\right) d\bchi 
             \\=& 
             \int_{\mathbb{R}^n}  \left(\dfrac{\partial}{\partial \btheta}[\hat\btheta(\bchi) - \EX\hat\btheta]\right) f(\bchi;\btheta) d\bchi +  
             \int_{\mathbb{R}^n}  [\hat\btheta(\bchi) - \EX\hat\btheta]
         \left(\dfrac{\partial}{\partial \btheta} f(\bchi;\btheta)\right)^{\top}
              d\bchi
             \\=& 
             \int_{\mathbb{R}^n}  \left(\dfrac{\partial}{\partial \btheta}[\hat\btheta(\bchi) - \EX\hat\btheta]\right) f(\bchi;\btheta) d\bchi + 
             \int_{\mathbb{R}^n}  [\hat\btheta(\bchi) - \EX\hat\btheta]\left(\dfrac{\partial}{\partial \btheta}\log f(\bchi;\btheta)\right)^{\top} f(\bchi;\btheta)
              d\bchi
             \\=& 
             \EX \left[\dfrac{\partial}{\partial \btheta}[\hat\btheta - \EX\hat\btheta]\right] + 
             \EX\left[  \left(\hat\btheta - \EX\hat\btheta\right)\left(\dfrac{\partial}{\partial \btheta}\log f(\bx;\btheta)\right)^{\top}\right]
             \\=& 
             \EX \left(\dfrac{\partial}{\partial \btheta}\hat\btheta \right) -
             \dfrac{\partial}{\partial \btheta}
             \EX \hat\btheta  + 
             \EX\left[  \left(\hat\btheta - \EX\hat\btheta\right)\left(\dfrac{\partial}{\partial \btheta}\log f(\bx;\btheta)\right)^{\top}\right].
        \end{split}
    \end{equation*}
    The final term above appears in bullet point 3. We note that, under the assumption of estimability, $\dfrac{\partial}{\partial \btheta}\hat\btheta = 0$ since $\hat\btheta$ cannot depend on $\btheta$. This proves the last equality in bullet 3 and finishes the proof.

\end{proof}

\section{Minimax Lower Bound}
\label{minisec}

In this section, we prove Theorem \ref{Mini}. 
A standard technique for bounding the minimax risk is the Generalized Fano's method \cite[Prop.~5.12]{wainwright2019high}:
\begin{theorem}[Generalized Fano's Method Minimax Lower Bound]\label{thm:fano}
    Given a finite subset of the parameter space $\mathcal{F} \subseteq \mathcal{S}_R$ and a semi-metric loss function $L$, assume that $|\mathcal{F}| = M_\delta$ and that there exists $\delta > 0$ such that
    $$\delta \leq \min_{\Mf^{(i)}, \Mf^{(j)} \in \mathcal{F}, i \neq j} L\left(\Mf^{(i)}, \Mf^{(i)} \right).$$
    Let $J$ denote a random subset of $\Ff$ and let $W$ denote the observed data. 
    If $I(W, J)$ is the mutual information between $W$ and a random variable given by a mixture of the distributions in $\Ff$ with indices in $J$, 
    we have that 
    $$\inf_{\widehat{\Mf}} \sup_{\Mf \in \mathcal{S}_R} \EX \Phi\left( L\left(\widehat{\Mf}, \Mf\right) \right) \geq \Phi\left(\frac{\delta}{2}\right) \left(1 - \frac{I(W, J) + \log 2}{\log M_\delta}\right),$$
    for any strictly increasing function $\Phi : [0, \infty) \mapsto [0, \infty)$.
\end{theorem}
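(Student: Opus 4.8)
The plan is to prove this via the classical reduction from estimation to multiple hypothesis testing, followed by Fano's inequality. Since $\mathcal{F} \subseteq \mathcal{S}_R$, the supremum over $\mathcal{S}_R$ dominates any average over the packing set $\mathcal{F}$. Concretely, I would let the index $J$ be uniform over $\{1,\dots,M_\delta\}$, set the true parameter to $\Mf^{(J)}$, and bound
\[
\sup_{\Mf \in \mathcal{S}_R} \EX\, \Phi\!\left(L(\widehat{\Mf},\Mf)\right) \;\ge\; \frac{1}{M_\delta}\sum_{j=1}^{M_\delta} \EX\!\left[\Phi\!\left(L(\widehat{\Mf},\Mf^{(j)})\right) \,\big|\, J=j\right].
\]
The goal is then to lower-bound this averaged (Bayes) risk uniformly in $\widehat{\Mf}$, after which the infimum over estimators can be taken at the very end.

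The next step is to convert an arbitrary estimator $\widehat{\Mf}$ into a test by defining the minimum-distance decoder $\psi(W) \coloneqq \arg\min_{i} L(\widehat{\Mf},\Mf^{(i)})$, which returns the index of the closest packing point. I would then exploit the $\delta$-separation hypothesis together with the triangle inequality satisfied by the semi-metric $L$: if the decoder errs, i.e. $\psi(W)=k\neq J$, then
\[
\delta \;\le\; L\!\left(\Mf^{(J)},\Mf^{(k)}\right) \;\le\; L\!\left(\Mf^{(J)},\widehat{\Mf}\right) + L\!\left(\widehat{\Mf},\Mf^{(k)}\right) \;\le\; 2\,L\!\left(\widehat{\Mf},\Mf^{(J)}\right),
\]
where the last inequality uses that $\psi$ selects the nearest point, so $L(\widehat{\Mf},\Mf^{(k)}) \le L(\widehat{\Mf},\Mf^{(J)})$. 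Thus every testing error forces $L(\widehat{\Mf},\Mf^{(J)}) \ge \delta/2$, and since $\Phi$ is strictly increasing and nonnegative we obtain $\Phi(L(\widehat{\Mf},\Mf^{(J)})) \ge \Phi(\delta/2)\,\mathbf{1}\{\psi(W)\neq J\}$. Taking expectations yields
\[
\EX\,\Phi\!\left(L(\widehat{\Mf},\Mf^{(J)})\right) \;\ge\; \Phi(\delta/2)\,\mathbb{P}\!\left(\psi(W)\neq J\right).
\]

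Finally, I would lower-bound the error probability of the induced test through Fano's inequality. Since $\psi(W)$ is a (possibly randomized) function of the data and $J$ is uniform over $M_\delta$ alternatives communicated through the channel $J \to W$, Fano gives
\[
\mathbb{P}\!\left(\psi(W)\neq J\right) \;\ge\; 1 - \frac{I(W,J)+\log 2}{\log M_\delta}.
\]
Chaining the three displays and taking the infimum over all $\widehat{\Mf}$ (equivalently, over all tests) produces exactly the claimed bound. The main subtlety I expect is not the estimation-to-testing reduction itself but ensuring the information term matches the statement: the triangle inequality is what converts the separation $\delta$ into the factor $\delta/2$, and one must be careful that the uniform prior on $J$, the mixture over $\mathcal{F}$, and the resulting $I(W,J)$ are defined on the same channel so that Fano's inequality applies verbatim; the remaining steps are routine once this bookkeeping is fixed.
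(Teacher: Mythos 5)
Your proof is correct, and it is essentially the canonical argument behind this theorem: note that the paper does not prove the statement at all but imports it by citation from Wainwright's book, and your three-step chain---lower-bounding the supremum by the uniform average over the packing set, converting an arbitrary estimator into a minimum-distance decoder whose error forces $L\bigl(\widehat{\Mf},\Mf^{(J)}\bigr)\geq \delta/2$ via the triangle inequality, and then applying Fano's inequality for a uniform $J$---is exactly the standard proof of that cited result. Two cosmetic remarks: the triangle inequality you invoke is precisely what the paper's ``semi-metric'' assumption is meant to supply (so you should state it as an explicit hypothesis rather than an expectation), and the separation condition in the paper's statement contains a typo ($L\bigl(\Mf^{(i)},\Mf^{(i)}\bigr)$ should read $L\bigl(\Mf^{(i)},\Mf^{(j)}\bigr)$), which your argument silently and correctly repairs.
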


We associate each parameter tensor $\Mf^{(i)}\in \mathcal{F}$ with the corresponding probability distribution $P^{(i)} \in \Pp_\mathcal{F}$, in this case, a joint Poisson distribution over the entries of the element. 
We omit the definition of mutual information here, but note that by \cite[Eq.~(13)]{KLbound}
$$I(W, J) \leq \frac{1}{\binom{M_{\delta}}{2} } \sum_{P^{(i)}, P^{(j)} \in \Pp_\mathcal{F},i\neq j} D_{KL}\left(P^{(i)} \| P^{(j)}\right) \leq \max_{P^{(i)}, P^{(j)} \in \Pp_\mathcal{F},i\neq j} D_{KL}\left(P^{(i)} \| P^{(j)}\right),$$
where $D_{KL}$ denotes the Kullback-Leibler (KL) divergence.
For tensors with independent Poisson entries we may write:
\[
D_{KL}\left( P^{(i)} \| P^{(j)} \right) = \sum_{\textbf{k}\in\{1,\cdots,I\}^N} \left[ m_{\textbf{k}}^{(i)} \log \frac{m_{\textbf{k}}^{(i)}}{m_{\textbf{k}}^{(j)}} - m_{\textbf{k}}^{(i)} + m_{\textbf{k}}^{(j)}\right].
\] 
This observation enables us to write the minimax lower bound in terms of the KL divergence:
\begin{corollary}\label{thm:fano_minimax}
    Under the assumptions of Theorem \ref{thm:fano}, if we further have $0 < \gamma < \infty$ such that
    $$\gamma \geq \max_{i \neq j; P^{(i)}, P^{(j)} \in \Pp_\mathcal{F}} D_{KL}\left(P^{(i)} \| P^{(j)}\right),$$
    we may write 
    $$\inf_{\widehat{\Mf}} \sup_{\Mf \in \mathcal{S}_R} \EX \Phi\left( L\left(\widehat{\Mf}, \Mf\right) \right) \geq \Phi\left(\frac{\delta}{2}\right) \left(1 - \frac{ \gamma + \log 2}{\log M_\delta}\right).$$
\end{corollary}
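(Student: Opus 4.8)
The plan is to obtain the stated bound as an immediate specialization of Theorem \ref{thm:fano}, in which the mutual-information term $I(W,J)$ is replaced by the hypothesized uniform KL bound $\gamma$. First I would invoke Theorem \ref{thm:fano} verbatim: under the common assumptions (the finite family $\Ff$, the separation $\delta$, and the strictly increasing $\Phi$), it yields
$$\inf_{\widehat{\Mf}} \sup_{\Mf \in \mathcal{S}_R} \EX \Phi\left(L\left(\widehat{\Mf},\Mf\right)\right) \geq \Phi\left(\frac{\delta}{2}\right)\left(1 - \frac{I(W,J)+\log 2}{\log M_\delta}\right).$$

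Next I would chain the two mutual-information inequalities recorded just before the corollary, namely $I(W,J) \leq \max_{i \neq j} D_{KL}\!\left(P^{(i)} \| P^{(j)}\right)$ from \cite[Eq.~(13)]{KLbound}, together with the corollary's hypothesis $\gamma \geq \max_{i \neq j} D_{KL}\!\left(P^{(i)} \| P^{(j)}\right)$, to conclude $I(W,J) \leq \gamma$. This collapses the estimator-dependent mutual information into the single explicit quantity that the downstream proof of Theorem \ref{Mini} can control.

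The final step is to substitute this bound into the parenthetical factor. Since the finite family has $M_\delta \geq 2$ elements (so that $\binom{M_\delta}{2}$ is positive and $\log M_\delta > 0$), the map $t \mapsto 1 - \tfrac{t + \log 2}{\log M_\delta}$ is strictly decreasing, whence $I(W,J) \leq \gamma$ gives $1 - \tfrac{I(W,J)+\log 2}{\log M_\delta} \geq 1 - \tfrac{\gamma + \log 2}{\log M_\delta}$. Because $\Phi$ maps $[0,\infty)$ into $[0,\infty)$, the multiplier $\Phi(\delta/2)$ is nonnegative, so multiplying through preserves the inequality and chains with the display above to produce exactly the claimed lower bound.

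There is no substantive obstacle here; the only points requiring care are bookkeeping ones that I would flag explicitly. I must verify that $\Phi(\delta/2) \geq 0$ so that enlarging $I(W,J)$ to $\gamma$ weakens rather than reverses the bound, and that $\log M_\delta > 0$ so the substitution is monotone in the intended direction. Both follow from the standing hypotheses. I would also note that the resulting inequality remains valid even when $1 - \tfrac{\gamma + \log 2}{\log M_\delta}$ is negative; its \emph{usefulness}, naturally, requires this factor to be positive, which is arranged later by choosing $\Ff$ large enough in the proof of Theorem \ref{Mini}.
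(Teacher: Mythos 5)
Your proposal is correct and follows exactly the paper's (implicit) argument: the paper treats the corollary as immediate from substituting the chained bound $I(W,J) \leq \max_{i\neq j} D_{KL}\left(P^{(i)} \| P^{(j)}\right) \leq \gamma$ into Theorem \ref{thm:fano}. Your extra bookkeeping checks (nonnegativity of $\Phi(\delta/2)$, positivity of $\log M_\delta$, and the remark that the bound is vacuous but valid when the parenthetical factor is negative) are sound and only make explicit what the paper leaves tacit.
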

As a point of interest, if we were to observe $n > 1$ realizations (samples) of the tensor, the decoupling property of the KL divergence would scale the KL divergence by $n$ (see \cite[Eq.~15.11(a)]{wainwright2019high}), and we would replace $\gamma$ by $n \gamma$ in Corollary \ref{thm:fano_minimax}.

To obtain Theorem \ref{Mini}, we will choose $\Phi(x) = x^2$ and the Frobenius norm of $\widehat{\Mf} - \Mf$ as a the loss function. 
To establish our minimax bound, we will control $\delta$ so that 
$$\log M_\delta \geq 2 (\gamma + \log 2),$$
leading to 
$$\left(1 - \frac{\gamma + \log 2}{\log M_\delta}\right) \geq \frac{1}{2},$$
and a minimax lower bound of $\frac{1}{2} \Phi(\delta / 2) = \frac{1}{8} \delta^2$.
We have three steps in our analysis:
\begin{enumerate}
 \item Define a packing set $\Ff$ with separation $\delta$.
\item Compute the upper bound $\gamma$ required in Corollary \ref{thm:fano_minimax}.
    \item Select $\delta$ such that $\log M_\delta \geq 2 (\gamma + \log 2)$.
\end{enumerate}

The proof applies Lemmas \ref{lem:minimaxpacking} and \ref{lem:kl_pois}, which we state afterwards. We refer the reader to Section \ref{minimaxlemmas} for the required lemmas and their proofs.

\begin{proof}[Proof of Theorem \ref{Mini}]
We begin by applying Lemma \ref{lem:minimaxpacking} to create a packing set in $\mathcal{S}_R(\tilde{\beta},\tilde{\alpha})$. We then have the following: for some constant $\epsilon \in (0, 1]$ (to be specified later), we have a packing set $\mathcal{F}$ with $M_\delta = |\mathcal{F}| \geq 2^{IR/ 8}$ and spacing $\delta = \frac{\epsilon}{4} (\tilde{\alpha} - \tilde{\beta})\sqrt{I^N}$. 

We next apply Lemma \ref{lem:kl_pois} to upper bound the KL divergence between the elements of the packing set. We conclude that since the spacing of elements in the packing set is bounded above by $\epsilon (\tilde{\alpha} - \tilde{\beta}) \sqrt{I^N}$ (by Lemma \ref{lem:minimaxpacking}), the KL divergence is bounded above by
$$\gamma = \epsilon^2 \frac{(\tilde{\alpha} - \tilde{\beta})^2}{\tilde{\beta}} I^N.$$

We now need to tune $\delta$ so that $\log M_\delta \geq 2(\gamma + \log 2)$, which
we accomplish by adjusting the parameter $\epsilon$. 
That is, we choose $\epsilon \in (0, 1]$ so that 
$$\frac{IR\log 2}{8} \geq 2 \left(\epsilon^2 \frac{(\tilde{\alpha} - \tilde{\beta})^2}{\tilde{\beta}} I^N + \log 2 \right).$$
Equivalently,
$$\epsilon^2 \leq \frac{(\frac{IR}{16} - 1)}{I^N} \frac{\tilde{\beta}\log 2}{(\tilde{\alpha} - \tilde{\beta})^2}.$$
For $\epsilon > 0$, we require $IR / 16 - 1 > 0$, or $IR > 16$---a slightly stronger condition than the $IR > 8$ required by the construction of the packing set in Lemma \ref{lem:minimaxpacking}. 
We also require that $\epsilon^2 \leq 1$, which imposes condition \eqref{condmini} in the statement of the theorem. 

It remains to compute the final minimax bound, $\delta^2 / 8$:
\begin{align*}
    \frac{1}{2} \Phi(\delta / 2) &= \frac{1}{8} \delta^2 \\
    &= \frac{1}{8} \left(\frac{\epsilon^2}{16} (\tilde{\alpha} - \tilde{\beta})^2 I^N\right) \\
    &= \frac{\log 2}{128} \frac{(\frac{IR}{16} - 1)}{I^N} \frac{\tilde{\beta}}{(\tilde{\alpha} - \tilde{\beta})^2} (\tilde{\alpha} - \tilde{\beta})^2 I^N \\
    &= \frac{\tilde{\beta}\log 2}{128} \left(\frac{IR}{16} - 1 \right).
\end{align*}     
\end{proof}

\section{Required Lemmas}
\label{auxlemmas}

In this section we provide some necessary lemmas. Subsection \ref{mselem} states results needed for the MSE bounds in Section \ref{MSEsec}, while Subsection \ref{minimaxlemmas} focuses on lemmas needed for the minimax bound in Section \ref{minisec}. The proofs are provided in Subsection \ref{lemmasproofs}.

\subsection{MSE Lemmas}
\label{mselem}

The first lemma translates a bound on the tensor error to a bound on rank-one factor error.

\begin{lemma}
\label{factorupperbound}
    Let $\boldsymbol{\mathscr{M}}$ and $\widetilde{\boldsymbol{\mathscr{M}}}$ be $N$-way rank-one tensors with
    \[
\boldsymbol{\mathscr{M}}= \textbf{u}^{(1)}\circ\cdots \circ \textbf{u}^{(N)} \ \ \mbox{and} \ \ \widetilde{\boldsymbol{\mathscr{M}}}= \tilde{\textbf{u}}^{(1)}\circ\cdots \circ \tilde{\textbf{u}}^{(N)},
\]
where $\textbf{u}^{(n)}, \tilde{\textbf{u}}^{(n)}\in [\beta,\infty)^{I}$ for all $n\in[N]$. Assume $\|\textbf{u}^{(1)}\|_2 = \|\textbf{u}^{(2)}\|_2 = \cdots = \|\textbf{u}^{(N)}\|_2$ and $\|\tilde{\textbf{u}}^{(1)}\|_2 = \|\tilde{\textbf{u}}^{(2)}\|_2 = \cdots = \|\tilde{\textbf{u}}^{(N)}\|_2$.

If
\[
\Big\| \boldsymbol{\mathscr{M}} - \widetilde{\boldsymbol{\mathscr{M}}} \Big\|_F^{2} \leq \epsilon
\]
for some $\epsilon>0$, then
\[
\| \textbf{u}^{(n)} - \tilde{\textbf{u}}^{(n)} \|_2^2 \leq \frac{\epsilon}{\beta^{2(N-1)} I^{N-1}}
\]
for all $n \in [N]$.
\end{lemma}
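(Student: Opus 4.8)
The plan is to reduce the claim to an elementary scalar inequality by exploiting the rank-one structure, and then to prove that inequality directly. Fix a mode $n$; by the symmetry of the hypotheses across modes it suffices to treat this case. Write $s \coloneqq \|\textbf{u}^{(n)}\|_2$ and $\tilde s \coloneqq \|\tilde{\textbf{u}}^{(n)}\|_2$, which by the balancing assumption equal the common factor norms $\|\textbf{u}^{(k)}\|_2$ and $\|\tilde{\textbf{u}}^{(k)}\|_2$ for every $k$, and set $c_k \coloneqq \langle \textbf{u}^{(k)}, \tilde{\textbf{u}}^{(k)}\rangle$. Because the inner product of two rank-one tensors factorizes across modes, I would first record the two expansions
\[
\big\|\boldsymbol{\mathscr{M}} - \widetilde{\boldsymbol{\mathscr{M}}}\big\|_F^2 = s^{2N} + \tilde s^{2N} - 2\prod_{k=1}^N c_k
\qquad\text{and}\qquad
\|\textbf{u}^{(n)} - \tilde{\textbf{u}}^{(n)}\|_2^2 = s^2 + \tilde s^2 - 2c_n,
\]
so that the entire lemma becomes the scalar statement $s^2+\tilde s^2 - 2c_n \le D^{-1}\big(s^{2N}+\tilde s^{2N}-2\prod_k c_k\big)$ with $D \coloneqq \beta^{2(N-1)}I^{N-1}$.

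Next I would extract the two structural facts that drive the argument. The entrywise lower bound $\textbf{u}^{(k)},\tilde{\textbf{u}}^{(k)} \in [\beta,\infty)^I$ gives $s^2, \tilde s^2 \ge I\beta^2$, hence $s\tilde s \ge I\beta^2$ and $(s\tilde s)^{N-1} \ge (I\beta^2)^{N-1} = D$; it also guarantees $c_k \ge 0$, while Cauchy--Schwarz gives $c_k \le s\tilde s$. I emphasize that \emph{positivity is essential here}: without it the sign flip $(\textbf{u}^{(k)},\tilde{\textbf{u}}^{(k)}) \mapsto (-\textbf{u}^{(k)},-\tilde{\textbf{u}}^{(k)})$ would leave the tensor unchanged while making the factor error large, so the lemma would be false. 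Setting $d_k \coloneqq s\tilde s - c_k \in [0, s\tilde s]$, the factor error rewrites as $(s-\tilde s)^2 + 2d_n$.

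The heart of the proof is then a clean split of the tensor error into a \emph{scale} part and an \emph{alignment} part:
\[
s^{2N} + \tilde s^{2N} - 2\prod_{k} c_k = \big(s^N - \tilde s^N\big)^2 + 2\Big((s\tilde s)^N - \prod_k c_k\Big).
\]
For the scale part I would factor $s^N - \tilde s^N = (s-\tilde s)\sum_{j=0}^{N-1} s^{N-1-j}\tilde s^{j}$ and bound the geometric sum below by the single term $s^{N-1} \ge (\sqrt{I}\beta)^{N-1} = \sqrt{D}$, yielding $(s^N-\tilde s^N)^2 \ge D(s-\tilde s)^2$. For the alignment part I would write $\prod_k c_k = (s\tilde s)^N\prod_k(1 - d_k/(s\tilde s))$ and apply the telescoping inequality $1 - \prod_k(1-y_k) \ge y_n$ for $y_k \in [0,1]$, giving $(s\tilde s)^N - \prod_k c_k \ge (s\tilde s)^{N-1} d_n \ge D\,d_n$. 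Adding the two bounds shows the tensor error is at least $D\big((s-\tilde s)^2 + 2 d_n\big) = D\,\|\textbf{u}^{(n)} - \tilde{\textbf{u}}^{(n)}\|_2^2$, and dividing by $D$ together with $\|\boldsymbol{\mathscr{M}} - \widetilde{\boldsymbol{\mathscr{M}}}\|_F^2 \le \epsilon$ finishes the proof.

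I expect the main obstacle to be the \emph{scale-mismatch} case $s \ne \tilde s$: the naive approach of tracking only the alignment term controls the $2d_n$ contribution but completely misses the $(s-\tilde s)^2$ contribution to the factor error. The realization that removes this obstacle is the identity $s^{2N}+\tilde s^{2N}-2(s\tilde s)^N = (s^N - \tilde s^N)^2$, which isolates exactly the scale discrepancy and lets the lower bound $s^{N-1} \ge \sqrt D$ convert it into the required $D(s-\tilde s)^2$; the remaining alignment term is then handled robustly by the elementary telescoping bound.
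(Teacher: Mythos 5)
Your proof is correct, and it takes a genuinely different route from the paper's. The paper works top-down from the factor error: it expands $\|\textbf{u}^{(n)}-\tilde{\textbf{u}}^{(n)}\|_2^2$, substitutes the hypothesis in the form $-2\prod_k\langle\textbf{u}^{(k)},\tilde{\textbf{u}}^{(k)}\rangle \le \epsilon - \prod_k\|\textbf{u}^{(k)}\|_2^2-\prod_k\|\tilde{\textbf{u}}^{(k)}\|_2^2$ after dividing through by $\prod_{k\neq n}\langle\textbf{u}^{(k)},\tilde{\textbf{u}}^{(k)}\rangle$, and then disposes of the leftover scale terms via Cauchy--Schwarz, a WLOG ordering $\|\widetilde{\boldsymbol{\mathscr{M}}}\|_F\le\|\boldsymbol{\mathscr{M}}\|_F$, and the elementary fact $x/y+y/x\ge 2$, before finally invoking $\langle\textbf{u}^{(k)},\tilde{\textbf{u}}^{(k)}\rangle\ge\beta^2 I$. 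You instead prove the stronger deterministic inequality $\|\boldsymbol{\mathscr{M}}-\widetilde{\boldsymbol{\mathscr{M}}}\|_F^2 \ge \beta^{2(N-1)}I^{N-1}\,\|\textbf{u}^{(n)}-\tilde{\textbf{u}}^{(n)}\|_2^2$ for every $n$, with no reference to $\epsilon$, via the exact split (in your notation) $s^{2N}+\tilde s^{2N}-2\prod_k c_k=(s^N-\tilde s^N)^2+2\bigl((s\tilde s)^N-\prod_k c_k\bigr)$, matched term-by-term against the factor error $(s-\tilde s)^2+2d_n$; the geometric-sum factorization of $s^N-\tilde s^N$ handles the scale part, and the monotone-product bound $\prod_k(1-y_k)\le 1-y_n$ handles alignment. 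Each step checks out: $c_k\ge 0$ and $c_k\le s\tilde s$ give $y_k\in[0,1]$, and $s,\tilde s\ge\sqrt{I}\beta$ yield both $s^{2(N-1)}\ge \beta^{2(N-1)}I^{N-1}$ and $(s\tilde s)^{N-1}\ge \beta^{2(N-1)}I^{N-1}$, so the constant is identical to the paper's. What your route buys is a cleaner argument with no WLOG or case split, plus a self-contained ``inverse stability'' lower bound on the Frobenius discrepancy of balanced rank-one tensors that could be reused elsewhere; what the paper's route buys is a derivation already phrased in the upper-bound form in which the lemma is consumed in Corollary \ref{ZTPboundfac}. Both arguments rest on the same three ingredients: the entrywise lower bound $\beta$, Cauchy--Schwarz applied modewise, and the balanced-norm assumption (whose necessity your sign-flip remark correctly illustrates).
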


We next present a result bounding the nuclear norm of a CP rank $R$ tensor in terms of the Frobenius norm. The following result improves upon previous tensor nuclear norm bounds in the literature \cite{minimaxbin,NNbound}. We present it as a theorem since it may be of interest on its own.

 \begin{theorem}
	\label{rankRthm}
	If $\boldsymbol{\mathscr{X}}\in\mathbb{C}^{I\times \cdots\times I}$ is a CP rank R tensor, then
 \begin{equation}
 \label{TrankR}
     \|\boldsymbol{\mathscr{X}}\|_{*} \leq R\|\boldsymbol{\mathscr{X}}\|_F.
 \end{equation}
\end{theorem}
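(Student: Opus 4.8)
The plan is to start from the variational definition of the tensor nuclear norm: $\|\boldsymbol{\mathscr{X}}\|_*$ is the infimum of $\sum_r |\lambda_r|$ over all representations $\boldsymbol{\mathscr{X}} = \sum_r \lambda_r\, \mathbf{a}_r^{(1)}\circ\cdots\circ\mathbf{a}_r^{(N)}$ into unit-norm rank-one atoms. Equivalently, for any rank-one decomposition $\boldsymbol{\mathscr{X}} = \sum_r \mathbf{c}_r$, subadditivity of the norm together with the identity $\|\mathbf{c}_r\|_* = \|\mathbf{c}_r\|_F$ (valid for rank-one tensors, where the nuclear norm coincides with the Frobenius norm and with the product of factor norms) yields $\|\boldsymbol{\mathscr{X}}\|_* \le \sum_r \|\mathbf{c}_r\|_F$. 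This reduces the theorem to exhibiting a rank-$R$ decomposition whose component norms sum to at most $R\|\boldsymbol{\mathscr{X}}\|_F$; it in fact suffices to arrange that every component satisfies $\|\mathbf{c}_r\|_F \le \|\boldsymbol{\mathscr{X}}\|_F$, since then $\sum_r \|\mathbf{c}_r\|_F \le R\|\boldsymbol{\mathscr{X}}\|_F$.

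I would establish this norm-controlled decomposition by induction on the rank $R$. The base case $R=1$ is immediate, as $\|\boldsymbol{\mathscr{X}}\|_* = \|\boldsymbol{\mathscr{X}}\|_F$ for a rank-one tensor. For the inductive step, I would peel off a single rank-one term, writing $\boldsymbol{\mathscr{X}} = \mathbf{c} + \boldsymbol{\mathscr{R}}$ with $\mathrm{rank}(\boldsymbol{\mathscr{R}}) \le R-1$. Choosing $\mathbf{c} = \sigma\,\mathbf{a}$ to be a best rank-one approximation gives $\|\mathbf{c}\|_F = \sigma = \|\boldsymbol{\mathscr{X}}\| \le \|\boldsymbol{\mathscr{X}}\|_F$ (the spectral norm never exceeds the Frobenius norm), and optimality forces the residual to be orthogonal to $\mathbf{a}$, so that the Pythagorean relation $\|\boldsymbol{\mathscr{R}}\|_F^2 = \|\boldsymbol{\mathscr{X}}\|_F^2 - \sigma^2 \le \|\boldsymbol{\mathscr{X}}\|_F^2$ holds. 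Combining subadditivity with the inductive hypothesis applied to $\boldsymbol{\mathscr{R}}$ then gives $\|\boldsymbol{\mathscr{X}}\|_* \le \|\mathbf{c}\|_F + \|\boldsymbol{\mathscr{R}}\|_* \le \|\boldsymbol{\mathscr{X}}\|_F + (R-1)\|\boldsymbol{\mathscr{R}}\|_F \le R\|\boldsymbol{\mathscr{X}}\|_F$, closing the induction.

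The delicate point, and the step I expect to be the main obstacle, is the requirement that a single rank-one peel genuinely lowers the rank to at most $R-1$. For matrices this is automatic (Eckart--Young: removing the leading singular component drops the rank by one), but for higher-order tensors the best rank-one approximation need not reduce the rank at all, a manifestation of the degeneracy and border-rank pathologies of the CP format discussed earlier. I would therefore decouple the two demands on the peeled term: instead of insisting on the global best rank-one fit, I would select a rank-one $\mathbf{c}$ that provably reduces the rank while still obeying both $\|\mathbf{c}\|_F \le \|\boldsymbol{\mathscr{X}}\|_F$ and $\|\boldsymbol{\mathscr{R}}\|_F \le \|\boldsymbol{\mathscr{X}}\|_F$. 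Restricting attention to the multilinear subspace $V_1\otimes\cdots\otimes V_N$ spanned by the existing factors (each $\dim V_n \le R$) keeps the residual rank-$\le R$ with factors in a fixed finite-dimensional span; within this span one either detects a linear redundancy among the $R$ unit atoms, which lets one shift the coefficient vector $\boldsymbol{\lambda}$ to annihilate one term without increasing any component norm, or else handles the non-degenerate case where a best-fit peel already leaves a genuine rank-$(R-1)$ remainder. Verifying that such a norm-controlled, rank-decreasing peel always exists is where the technical weight of the argument lies; it is precisely this refinement that upgrades the naive mode-by-mode orthogonalization recursion, which only delivers the weaker factor $R^{(N-1)/2}$, to the sharp linear-in-$R$ bound claimed in \eqref{TrankR}.
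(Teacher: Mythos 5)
Your strategy---constructing a fresh, norm-controlled rank-$R$ decomposition by inductively peeling rank-one terms---is genuinely different from the paper's, but it has a gap at exactly the step you flag, and the repair you sketch does not close it. The induction needs, at every step, a rank-one $\mathbf{c}$ with $\|\mathbf{c}\|_F \le \|\boldsymbol{\mathscr{X}}\|_F$, a controlled residual, \emph{and} $\mathrm{rank}(\boldsymbol{\mathscr{X}}-\mathbf{c}) \le R-1$. You rightly note that a best rank-one approximation need not satisfy the rank condition when $N \ge 3$ (Stegeman and Comon showed that subtracting a best rank-one approximation can even \emph{increase} tensor rank), but your proposed dichotomy collapses: in a minimal CPD the $R$ unit-norm atoms are automatically linearly independent---if one atom were a combination of the others, substituting would yield an $(R-1)$-term representation, contradicting rank $R$---so the ``linear redundancy'' branch never occurs, and the remaining ``non-degenerate'' branch is precisely the unproven rank-decreasing property you set out to avoid. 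Restricting to the multilinear span $V_1\otimes\cdots\otimes V_N$ of the factors does not help, since the pathology already lives inside that span. Worse, the target of your reduction---a rank-$R$ decomposition with every component norm at most $\|\boldsymbol{\mathscr{X}}\|_F$---cannot be met by just taking the given CPD: already for the $2\times 2$ identity, writing $I_2 = \sec\theta\,\bigl(\mathbf{e}_1\circ \mathbf{v}_{\theta} + \mathbf{v}_{\theta}^{\perp}\circ \mathbf{e}_2\bigr)$ with $\mathbf{v}_{\theta} = (\cos\theta,\sin\theta)^{\top}$ and $\mathbf{v}_{\theta}^{\perp} = (-\sin\theta,\cos\theta)^{\top}$ gives a minimal two-term decomposition into unit-norm rank-one atoms whose coefficients $\sec\theta$ are unbounded as $\theta \to \pi/2$, while $\|I_2\|_F = \sqrt{2}$. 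So the existence of the norm-controlled decomposition is essentially the whole theorem, and your proposal never establishes it; the induction does not close.

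For comparison, the paper's proof is non-inductive and never constructs a new decomposition: it fixes the given minimal CPD $\boldsymbol{\mathscr{X}} = \sum_{r} a_r \boldsymbol{\mathscr{X}}_r$ with unit-norm, linearly independent atoms, bounds $\|\boldsymbol{\mathscr{X}}\|_* \le \sum_r |a_r|$ by the triangle inequality, and then bounds the largest coefficient by $\|\boldsymbol{\mathscr{X}}\|_F$ via Gram--Schmidt: orthogonalizing the atoms with the maximal-coefficient atom last, matching coefficients through linear independence, and recovering $a_R$ as an inner product of $\boldsymbol{\mathscr{X}}$ against the orthogonal residual of $\boldsymbol{\mathscr{X}}_R$, followed by Cauchy--Schwarz. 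No best rank-one approximations, no peeling, no rank reduction of residuals---which is exactly how it sidesteps the degeneracy you ran into. Be aware, though, that the identity example above shows per-coefficient control is delicate even on this route: the Gram--Schmidt residual of a unit atom has Frobenius norm \emph{at most} one, and when atoms are nearly parallel that norm is small and the recovered coefficient is correspondingly inflated, so the normalization in this step must be tracked with care in any complete writeup. As it stands, your proposal is incomplete at its central step, and patching it would require a selection rule for the peeled term that provably decreases rank while controlling norms---a tool not currently available for the CP format.
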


We end this section with a lemma that bounds the expected value based on a tail distribution bound. This lemma helps translate high probability error bounds to MSE bounds.

\begin{lemma}
    \label{expbdlemma}
    Let $y\in [0,L]$ be a continuous random variable. If for some $\delta$, it holds that $\mathbb{P}(y\geq \delta)\leq \frac{\delta}{L}$ then $\EX y \leq 2\delta$.
\end{lemma}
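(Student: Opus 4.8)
The plan is to prove the bound via the tail-integral (layer-cake) representation of the expectation together with a single split of the integration range at the threshold $\delta$. Since $y$ is nonnegative, we may write $\mathbb{E} y = \int_0^\infty \mathbb{P}(y \geq t)\, dt$, and because $y \leq L$ almost surely the integrand vanishes for $t > L$, so the upper limit collapses to $L$. This reduces the probabilistic statement to a deterministic estimate on the monotone tail function $t \mapsto \mathbb{P}(y \geq t)$, which is the cleanest way to exploit both the boundedness $y \in [0,L]$ and the hypothesized single-point tail bound.

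Next I would split $\int_0^L \mathbb{P}(y \geq t)\, dt = \int_0^\delta \mathbb{P}(y \geq t)\, dt + \int_\delta^L \mathbb{P}(y \geq t)\, dt$, working in the regime $\delta \leq L$ (the case $\delta > L$ is immediate, since then $\mathbb{E} y \leq L \leq \delta \leq 2\delta$). On the first interval I bound the tail trivially by $\mathbb{P}(y \geq t) \leq 1$, so that piece contributes at most $\delta$. On the second interval I use that the tail function is non-increasing, giving $\mathbb{P}(y \geq t) \leq \mathbb{P}(y \geq \delta)$ for every $t \geq \delta$, and then invoke the hypothesis $\mathbb{P}(y \geq \delta) \leq \delta/L$; this bounds the second integral by $(\delta/L)(L - \delta) \leq \delta$.

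Adding the two contributions yields $\mathbb{E} y \leq \delta + \delta = 2\delta$, as claimed. I expect no substantive obstacle: the only points needing a word of justification are the tail-integral identity, which holds for any nonnegative random variable and in particular for the continuous $y$ at hand, and the monotonicity of the tail, both of which are standard. The mild edge case $\delta > L$ is dispatched separately as noted, and in fact the hypothesis $\mathbb{P}(y \geq \delta) \leq \delta/L$ forces the informative regime to be $\delta \leq L$, so the argument above covers exactly the situation of interest.
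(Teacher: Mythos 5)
Your proof is correct. The paper actually states Lemma \ref{expbdlemma} without including a proof (Subsection \ref{lemmasproofs} proves only Lemma \ref{factorupperbound}, Theorem \ref{rankRthm}, and Lemma \ref{lem:minimaxpacking}), so there is no paper argument to compare against; your layer-cake derivation fills that gap validly. The split $\int_0^L \mathbb{P}(y \geq t)\,dt = \int_0^\delta + \int_\delta^L$, bounding the first piece by $\delta$ and the second by $(L-\delta)\,\mathbb{P}(y \geq \delta) \leq (L-\delta)\,\delta/L \leq \delta$, is sound, and you handle the edge case $\delta > L$ correctly (when $\delta > L$ the hypothesis holds vacuously since the tail is zero, and $\mathbb{E} y \leq L \leq 2\delta$ anyway). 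For what it's worth, the same bound follows in one line without the tail-integral identity: $\mathbb{E} y = \mathbb{E}\bigl[y\,\mathbf{1}_{\{y < \delta\}}\bigr] + \mathbb{E}\bigl[y\,\mathbf{1}_{\{y \geq \delta\}}\bigr] \leq \delta + L\,\mathbb{P}(y \geq \delta) \leq 2\delta$, which is likely the argument the authors had in mind and also shows that the continuity assumption on $y$ is not needed; your route needs it even less than you suggest, since $\mathbb{P}(y \geq t)$ and $\mathbb{P}(y > t)$ differ for at most countably many $t$ and so integrate identically.
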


\subsection{Minimax Lemmas}
\label{minimaxlemmas}

First, we recall the classical Varshamov-Gilbert bound, taken from \cite[Lemma~7]{minimaxbin}.
\begin{lemma}[Varshamov-Gilbert bound]\label{lem:vg}
    Let $\Omega = \left\{(w_1, w_2, \ldots, w_m) : w_i \in \{0, 1\}\right\} \subseteq \mathbb{R}^m$, and take $m > 8$. 
    Then, there exists a subset $\left\{\ww^{(0)}, \ww^{(1)}, \ldots, \ww^{(M)}\right\}$ of $\Omega$ such that $\ww^{(0)}$ is the zero vector and for $0 \leq j < k \leq M$,
    $$\left\| \ww^{(j)} - \ww^{(k)} \right\|_0 \geq \frac{m}{8}.$$
    %where $\| \cdot \|_0$ denotes the Hamming distance and $M \geq 2^{m / 8}$.
\end{lemma}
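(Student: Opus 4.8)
The plan is to establish this via a greedy maximal-packing (sphere-covering) argument in the Hamming metric. Although the statement as phrased only asserts existence of such a subset, the substantive content---and what is ultimately required by Lemma \ref{lem:minimaxpacking} to obtain the packing cardinality $M_\delta \geq 2^{IR/8}$---is that the index $M$ may be taken as large as $M \geq 2^{m/8}$. Accordingly, I would prove existence together with this cardinality bound. Throughout, for $\ww,\ww'\in\Omega=\{0,1\}^m$ the quantity $\|\ww-\ww'\|_0$ is exactly the Hamming distance (the number of disagreeing coordinates), so the goal is to produce a binary code of minimum Hamming distance at least $m/8$ and size at least $2^{m/8}+1$.

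First I would run the following greedy construction. Initialize the codeword set with $\ww^{(0)}=\mathbf 0$. At each step, if there remains a vector in $\Omega$ whose Hamming distance to every already-chosen codeword is at least $m/8$, adjoin such a vector; otherwise stop. Suppose the procedure terminates with $K$ codewords. By maximality, every vector of $\Omega$ lies at Hamming distance strictly less than $m/8$ from some codeword, i.e.\ the closed Hamming balls of radius $r:=\lceil m/8\rceil-1$ centered at the $K$ codewords cover all of $\Omega$. Since each such ball contains $V:=\sum_{i=0}^{r}\binom{m}{i}$ points, this covering yields $K\cdot V\geq 2^m$, hence $K\geq 2^m/V$.

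It then remains to bound the ball volume $V$. Because $r<m/8\le m/2$, I would invoke the standard entropy estimate $\sum_{i=0}^{r}\binom{m}{i}\le 2^{m\,h(r/m)}$, where $h(p)=-p\log_2 p-(1-p)\log_2(1-p)$ is the binary entropy; this follows from $1\ge\sum_{i\le r}\binom{m}{i}p^i(1-p)^{m-i}\ge p^r(1-p)^{m-r}V$ with $p=r/m\le 1/2$. Since $h$ is increasing on $[0,1/2]$ and $r/m<1/8$, we get $V\le 2^{m\,h(1/8)}$, and therefore $K\ge 2^{m(1-h(1/8))}$. Relabeling so that $\mathbf 0$ is $\ww^{(0)}$ and setting $M=K-1$ gives a code with the stated minimum-distance property and $M\ge 2^{m(1-h(1/8))}-1$.

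The crux---and the step I would treat as the main obstacle---is the numerical entropy inequality that converts this into the clean exponent $2^{m/8}$: one must check $1-h(1/8)>1/8$, i.e.\ $h(1/8)<7/8$. A direct evaluation gives $h(1/8)=\tfrac38+\tfrac78\log_2(8/7)\approx 0.544<\tfrac78$, so indeed $1-h(1/8)\approx 0.456>1/8$ and hence $K>2^{m/8}$, with $m>8$ ensuring $r\ge 1$ so that the minimum-distance requirement is nontrivial. The constants $m/8$ and $2^{m/8}$ are precisely calibrated so that this slack holds comfortably. As an alternative, I note that a probabilistic expurgation argument---drawing vectors uniformly, bounding the single-pair event $\{\text{distance}<m/8\}$ via Hoeffding's inequality for $\mathrm{Bin}(m,1/2)$, and deleting one endpoint from each offending pair---would yield the same cardinality, but the greedy covering argument is the most economical route here.
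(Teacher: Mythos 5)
Your proof is correct, and it is worth noting at the outset that the paper itself offers no proof to compare against: Lemma \ref{lem:vg} is imported verbatim by citation (Lemma~7 of the cited minimax paper, which in turn is the classical Varshamov--Gilbert bound, e.g.\ Tsybakov's Lemma~2.9), so you have supplied an argument where the paper supplies a reference. Your greedy maximal-packing construction is sound: the integer-distance bookkeeping (termination implies covering by Hamming balls of radius $r=\lceil m/8\rceil-1$, valid whether or not $m/8$ is an integer), the volume bound $K\cdot V\geq 2^m$, the entropy estimate $V\leq 2^{m\,h(r/m)}$ via the weighting trick with $p=r/m\leq 1/2$, and the numerical check $h(1/8)=\tfrac{3}{8}+\tfrac{7}{8}\log_2(8/7)\approx 0.544<\tfrac{7}{8}$ are all correct, yielding $K\geq 2^{m(1-h(1/8))}\gg 2^{m/8}$. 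You also correctly diagnosed a genuine defect in the statement as printed: without the cardinality guarantee $M\geq 2^{m/8}$ the lemma is vacuous (take $M=0$, or $\ww^{(1)}$ the all-ones vector), yet exactly this bound is invoked in the proof of Lemma \ref{lem:minimaxpacking} to get $|\widetilde{\mathcal C}|\geq 2^{IR/8}$; proving existence \emph{together with} the size bound, as you do, is the right repair. The only difference from the classical proof is cosmetic: the standard argument (Tsybakov) runs the same greedy packing but bounds the ball volume by the binomial tail via Hoeffding, $\sum_{i\leq m/8}\binom{m}{i}\leq 2^m e^{-9m/32}$, giving $K\geq e^{9m/32}>2^{m/8}$; your entropy bound buys a slightly sharper exponent $m(1-h(1/8))\approx 0.456\,m$ at the cost of the explicit evaluation of $h(1/8)$, and your mention of the expurgation alternative is accurate but unnecessary. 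Either route would serve the paper equally well.
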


We next use the Varshamov-Gilbert bound to construct a packing set $\mathcal{F} \subseteq \mathcal{S}_R(\tilde{\beta}, \tilde{\alpha})$ with $\tilde{\beta} = 0$ (no lower bound on the entries of the tensor).
Our approach is a modification of \cite[Lemma~9]{lee2020tensor} and \cite[Lemma~8]{minimaxbin}.
\begin{lemma}[Minimax Packing Set]\label{lem:minimaxpacking}
    Assume that $R \leq I$ and $IR > 8$. For a constant $\epsilon \in (0, 1]$ and bounds $0 < \tilde{\beta} < \tilde{\alpha}$, there is a finite set $\mathcal{F} \subset \mathcal{S}_R(\tilde{\beta}, \tilde{\alpha})$ such that:
    \begin{enumerate}
        \item The cardinality satisfies $|\mathcal{F}| \geq 2^{IR / 8}$;
        \item For any pair of distinct elements $\boldsymbol{\mathscr{M}}^{(i)}, \boldsymbol{\mathscr{M}}^{(j)} \in \mathcal{F}$, 
        $$\left\| \boldsymbol{\mathscr{M}}^{(i)} - \boldsymbol{\mathscr{M}}^{(j)} \right\|_F \geq \frac{\epsilon}{4} (\tilde{\alpha} - \tilde{\beta}) \sqrt{I^N};$$
        \item For any pair of distinct elements $\boldsymbol{\mathscr{M}}^{(i)}, \boldsymbol{\mathscr{M}}^{(j)} \in \mathcal{F}$, 
        $$\left\| \boldsymbol{\mathscr{M}}^{(i)} - \boldsymbol{\mathscr{M}}^{(j)} \right\|_F \leq  \epsilon (\tilde{\alpha} -\tilde{\beta}) \sqrt{I^N}.$$
    \end{enumerate}
\end{lemma}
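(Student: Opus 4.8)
The plan is to encode a binary code into a carefully structured rank-$R$ family whose entries are pinned inside the box $[\tilde\beta,\tilde\alpha]$, and then transfer Hamming distances to Frobenius distances via the Varshamov--Gilbert bound. First I would partition the mode-2 index set $\{1,\dots,I\}$ into $R$ consecutive blocks $B_1,\dots,B_R$ of sizes $s_r$ that are as equal as possible, so that $\sum_{r=1}^R s_r = I$ and $s_r \ge \lfloor I/R\rfloor \ge I/(2R)$ (using $R\le I$). Writing $\textbf{c}_r = \textbf{1}_{B_r}\in\{0,1\}^I$ for the indicator of block $r$, I associate to each binary string $w\in\{0,1\}^{IR}$, indexed by pairs $(r,i_1)$ with $r\in\{1,\dots,R\}$ and $i_1\in\{1,\dots,I\}$, the mode-1 perturbation vectors $\textbf{a}_r^{(w)}\in\{0,\Delta\}^I$ with $(\textbf{a}_r^{(w)})_{i_1}=\Delta\, w_{(r,i_1)}$, where $\Delta \coloneqq \epsilon(\tilde\alpha-\tilde\beta)$. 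The candidate tensor is
\[
\boldsymbol{\mathscr{M}}^{(w)} \coloneqq \sum_{r=1}^{R}\big(\tilde\beta\textbf{1}+\textbf{a}_r^{(w)}\big)\circ \textbf{c}_r\circ \textbf{1}\circ\cdots\circ \textbf{1}.
\]

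The key structural point, which I expect to be the crux, is that this single expression simultaneously controls rank, range, and distance. Because the blocks $B_r$ partition the mode-2 indices, the $\textbf{c}_r$ have disjoint support; hence the constant background $\tilde\beta\,\textbf{1}\circ\textbf{1}\circ\cdots\circ\textbf{1}$ is absorbed into the $R$ summands rather than forming a separate $(R{+}1)$-th term, so $\boldsymbol{\mathscr{M}}^{(w)}$ has rank at most $R$. For the same reason, the entry at $(i_1,i_2,\dots,i_N)$ equals $\tilde\beta+\Delta\,w_{(r,i_1)}$ for the unique $r$ with $i_2\in B_r$, which lies in $\{\tilde\beta,\tilde\beta+\Delta\}\subseteq[\tilde\beta,\tilde\alpha]$ since $\epsilon\le 1$; thus every $\boldsymbol{\mathscr{M}}^{(w)}\in\mathcal{S}_R(\tilde\beta,\tilde\alpha)$. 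Getting a construction that delivers all three of these at once, while still carrying $IR$ independent bits, is the main obstacle, and the disjoint-support block trick is what resolves it.

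Next I would translate Hamming distance to Frobenius distance. Two tensors $\boldsymbol{\mathscr{M}}^{(w)}$ and $\boldsymbol{\mathscr{M}}^{(w')}$ differ exactly on the entries tied to the bits where $w$ and $w'$ disagree: a disagreeing bit $(r,i_1)$ governs the $s_r I^{N-2}$ entries with mode-1 index $i_1$, mode-2 index in $B_r$, and the remaining modes free, each differing by $\Delta$, so
\[
\big\|\boldsymbol{\mathscr{M}}^{(w)}-\boldsymbol{\mathscr{M}}^{(w')}\big\|_F^2 = \Delta^2 I^{N-2}\!\!\sum_{(r,i_1):\, w_{(r,i_1)}\neq w'_{(r,i_1)}}\!\! s_r .
\]
Applying the Varshamov--Gilbert bound (Lemma \ref{lem:vg}) with $m=IR>8$ furnishes a subset $W\subseteq\{0,1\}^{IR}$ with $|W|\ge 2^{IR/8}$ and pairwise Hamming distance at least $IR/8$; I set $\mathcal{F}=\{\boldsymbol{\mathscr{M}}^{(w)}:w\in W\}$, giving property~1.

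It remains to read off properties~2 and~3 from the displayed identity. For the lower bound, using $s_r\ge I/(2R)$ and $\|w-w'\|_0\ge IR/8$ gives $\|\boldsymbol{\mathscr{M}}^{(w)}-\boldsymbol{\mathscr{M}}^{(w')}\|_F^2\ge \Delta^2 I^{N-2}\cdot\frac{I}{2R}\cdot\frac{IR}{8}=\Delta^2 I^N/16$, which is property~2. For the upper bound, extending the sum over all bits and using the exact identity $\sum_r s_r=I$ yields $\|\boldsymbol{\mathscr{M}}^{(w)}-\boldsymbol{\mathscr{M}}^{(w')}\|_F^2\le \Delta^2 I^{N-2}\cdot I\cdot I=\Delta^2 I^N$, which is property~3. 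Substituting $\Delta=\epsilon(\tilde\alpha-\tilde\beta)$ produces the stated constants $1/4$ and $1$; note that the factor of two surrendered to the floor in $s_r\ge I/(2R)$ is precisely what degrades the lower constant from $1/(2\sqrt2)$ (attained when $R\mid I$) down to the uniformly valid $1/4$, while the upper bound stays clean because $\sum_r s_r=I$ holds exactly regardless of divisibility.
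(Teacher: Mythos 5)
Your proposal is correct and is essentially the paper's argument: both encode $\{0,1\}^{IR}$ into tensors with entries in $\{\tilde\beta,\,\tilde\beta+\epsilon(\tilde\alpha-\tilde\beta)\}$ that are constant along modes $3,\dots,N$, carry at most $R$ distinct mode-2 column patterns (hence CP rank $\le R$), and then invoke Varshamov--Gilbert with $m=IR$, converting Hamming separation to Frobenius separation with the same $\lfloor I/R\rfloor \ge I/(2R)$ step yielding the constant $1/4$ and the trivial entrywise count yielding the upper bound. Your contiguous-block partition of mode 2 is just a relabeling of the paper's periodic tiling of the $I\times R$ matrix, with the cosmetic benefit that your Frobenius-distance formula is an exact identity and the rank bound is immediate from writing the tensor as a sum of $R$ outer products.
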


We next state a useful lemma about the multivariate Poisson distribution, taken from \cite[Lemma~19]{jiang2015minimax} but rephrased to fit our context and notation.

\begin{lemma} \label{lem:kl_pois}
As in Section \ref{minisec}, associate each Poisson parameter tensor $\Mf^{(i)}$ with a corresponding multivariate Poisson distribution $P^{(i)}$. Then the Kullback-Leibler (KL) divergence is bounded as:
    $$D_{KL}\left( P^{(i)} \| P^{(j)} \right) \leq \frac{1}{\min_{\emph{\textbf{i}}} m_{\emph{\textbf{i}}}^{(j)}}\Big\|\Mf^{(i)} - \Mf^{(j)}\Big\|_F^2 \leq \frac{1}{\tilde{\beta}}\Big\|\Mf^{(i)} - \Mf^{(j)}\Big\|_F^2.$$
\end{lemma}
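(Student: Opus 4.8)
The plan is to reduce the multivariate KL bound to an entrywise scalar inequality. Since each distribution $P^{(i)}$ is a product of independent univariate Poisson distributions with means $m_{\textbf{k}}^{(i)}$, the KL divergence decomposes additively across the tensor entries, yielding exactly the formula already recorded in Section \ref{minisec},
$$D_{KL}\left( P^{(i)} \| P^{(j)} \right) = \sum_{\textbf{k}} \left[ m_{\textbf{k}}^{(i)} \log \frac{m_{\textbf{k}}^{(i)}}{m_{\textbf{k}}^{(j)}} - m_{\textbf{k}}^{(i)} + m_{\textbf{k}}^{(j)} \right].$$
It therefore suffices to bound each summand by $\left(m_{\textbf{k}}^{(i)} - m_{\textbf{k}}^{(j)}\right)^2 / m_{\textbf{k}}^{(j)}$, after which summation recovers both displayed inequalities.

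The key step is the scalar claim that for all $a,b>0$,
$$a\log\frac{a}{b} - a + b \leq \frac{(a-b)^2}{b}.$$
I would prove this by the substitution $t = a/b$ and dividing through by $b$, which reduces the assertion to $t\log t - t + 1 \leq (t-1)^2$; expanding the right side and cancelling a common factor of $t>0$ leaves the standard logarithm bound $\log t \leq t-1$, valid for every $t>0$. Because the hypothesis $\tilde{\beta}>0$ forces every entry to satisfy $m_{\textbf{k}}^{(j)} \geq \tilde{\beta} > 0$, all arguments of the logarithms and all divisions are well-defined and strictly positive, so the substitution is legitimate and no zero-entry boundary cases arise.

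With the entrywise estimate in hand, summing over $\textbf{k}$ gives
$$D_{KL}\left( P^{(i)} \| P^{(j)} \right) \leq \sum_{\textbf{k}} \frac{\left(m_{\textbf{k}}^{(i)} - m_{\textbf{k}}^{(j)}\right)^2}{m_{\textbf{k}}^{(j)}} \leq \frac{1}{\min_{\textbf{k}} m_{\textbf{k}}^{(j)}} \sum_{\textbf{k}} \left(m_{\textbf{k}}^{(i)} - m_{\textbf{k}}^{(j)}\right)^2,$$
where the second inequality factors out the smallest denominator. Recognizing the remaining sum as $\|\Mf^{(i)} - \Mf^{(j)}\|_F^2$ establishes the first stated bound, and the second follows at once from $\min_{\textbf{k}} m_{\textbf{k}}^{(j)} \geq \tilde{\beta}$.

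I do not anticipate a genuine obstacle here: the entire argument hinges on the elementary inequality $\log t \leq t - 1$, and the only point demanding care is confirming strict positivity of every entry so that the logarithms, the division, and the factoring of the minimum denominator are all justified, which the assumption $\tilde{\beta}>0$ supplies directly.
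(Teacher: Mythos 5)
Your proof is correct: the tensorization of the KL divergence over independent Poisson coordinates, the scalar inequality $a\log\frac{a}{b} - a + b \leq \frac{(a-b)^2}{b}$ reduced via $t = a/b$ to the standard bound $\log t \leq t - 1$, and the extraction of $\min_{\textbf{k}} m_{\textbf{k}}^{(j)} \geq \tilde{\beta}$ all check out, and you are right that positivity of the entries (guaranteed by $\tilde{\beta} > 0$ in the packing-set construction) is the only point requiring care. Note, however, that the paper does not prove this lemma at all; it imports it verbatim from Lemma~19 of the cited work \cite{jiang2015minimax}, so there is no in-paper argument to compare against. Your write-up is precisely the standard derivation underlying that citation, so in effect you have supplied the self-contained proof the paper outsources, and it could be spliced in as-is.
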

This lemma will allow us to upper bound the KL divergence between elements of the packing set by an upper bound on the distance between elements. 

\subsection{Proof of Lemmas}
\label{lemmasproofs}

\subsubsection{Proof of Lemma \ref{factorupperbound}}
\begin{proof}[Proof of Lemma \ref{factorupperbound}]

Under our assumptions, notice that
\[
\Big\| \boldsymbol{\mathscr{M}} - \widetilde{\boldsymbol{\mathscr{M}}} \Big\|_F^{2} = \prod_{n=1}^{N}\|\textbf{u}^{(n)}\|_2^2 + \prod_{n=1}^{N}\|\tilde{\textbf{u}}^{(n)}\|_2^2 -2\prod_{n=1}^{N}\langle \textbf{u}^{(n)},\tilde{\textbf{u}}^{(n)}\rangle \leq \epsilon.
\]
Since $\langle \textbf{u}^{(n)},\tilde{\textbf{u}}^{(n)}\rangle \geq 0$ for all $n$, we obtain
\begin{align*}
   &\|\textbf{u}^{(n)}-\tilde{\textbf{u}}^{(n)}\|_2^2 = \|\textbf{u}^{(n)}\|_2^2 + \|\tilde{\textbf{u}}^{(n)}\|_2^2 -2\langle \textbf{u}^{(n)},\tilde{\textbf{u}}^{(n)}\rangle \\ 
   &\leq \|\textbf{u}^{(n)}\|_2^2 + \|\tilde{\textbf{u}}^{(n)}\|_2^2 + \prod_{k\neq n}\frac{\epsilon}{\langle \textbf{u}^{(k)},\tilde{\textbf{u}}^{(k)}\rangle} - \|\textbf{u}^{(n)}\|_{2}^{2}\prod_{k\neq n}\frac{\|\textbf{u}^{(k)}\|_{2}^{2}}{\langle \textbf{u}^{(k)},\tilde{\textbf{u}}^{(k)}\rangle} - \|\tilde{\textbf{u}}^{(n)}\|_{2}^{2}\prod_{k\neq n}\frac{\|\tilde{\textbf{u}}^{(k)}\|_{2}^{2}}{\langle \textbf{u}^{(k)},\tilde{\textbf{u}}^{(k)}\rangle} \\
   &= \prod_{k\neq n}\frac{\epsilon}{\langle \textbf{u}^{(k)},\tilde{\textbf{u}}^{(k)}\rangle} + \|\textbf{u}^{(n)}\|_{2}^{2}\left(1-\prod_{k\neq n}\frac{\|\textbf{u}^{(k)}\|_{2}^{2}}{\langle \textbf{u}^{(k)},\tilde{\textbf{u}}^{(k)}\rangle}\right) + \|\tilde{\textbf{u}}^{(n)}\|_{2}^{2}\left(1-\prod_{k\neq n}\frac{\|\tilde{\textbf{u}}^{(k)}\|_{2}^{2}}{\langle \textbf{u}^{(k)},\tilde{\textbf{u}}^{(k)}\rangle}\right) \\
   &\leq \prod_{k\neq n}\frac{\epsilon}{\langle \textbf{u}^{(k)},\tilde{\textbf{u}}^{(k)}\rangle} + \|\textbf{u}^{(n)}\|_{2}^{2}\left(1-\prod_{k\neq n}\frac{\|\textbf{u}^{(k)}\|_{2}}{\|\tilde{\textbf{u}}^{(k)}\|_2}\right) + \|\tilde{\textbf{u}}^{(n)}\|_{2}^{2}\left(1-\prod_{k\neq n}\frac{\|\tilde{\textbf{u}}^{(k)}\|_{2}}{\| \textbf{u}^{(k)}\|_2}\right),
\end{align*}
where the last line applies the Cauchy–Schwarz inequality. 

Assume WLOG that $\|\widetilde{\boldsymbol{\mathscr{M}}} \|_F \leq \| \boldsymbol{\mathscr{M}}\|_F$, and note this implies that $\|\tilde{\textbf{u}}^{(k)}\|_{2} \leq \|\textbf{u}^{(k)}\|_{2}$ for all $k$.
Then the last term in the last line above is non-negative. Using $\|\tilde{\textbf{u}}^{(n)}\|_{2} \leq \|\textbf{u}^{(n)}\|_{2}$ we obtain
\[
     \|\textbf{u}^{(n)}-\tilde{\textbf{u}}^{(n)}\|_2^2 \leq \prod_{k\neq n}\frac{\epsilon}{\langle \textbf{u}^{(k)},\tilde{\textbf{u}}^{(k)}\rangle} + \|\textbf{u}^{(n)}\|_{2}^{2}\left(2-\prod_{k\neq n}\frac{\|\textbf{u}^{(k)}\|_{2}}{\|\tilde{\textbf{u}}^{(k)}\|_2} -\prod_{k\neq n}\frac{\|\tilde{\textbf{u}}^{(k)}\|_{2}}{\| \textbf{u}^{(k)}\|_2}\right) \leq \prod_{k\neq n}\frac{\epsilon}{\langle \textbf{u}^{(k)},\tilde{\textbf{u}}^{(k)}\rangle}
\]
where the last inequality follows from the fact that the function $f(x,y) = \frac{x}{y}+\frac{y}{x}$ is greater than or equal to 2 for all $x,y> 0$. If $\| \boldsymbol{\mathscr{M}}\|_F \leq \|\widetilde{\boldsymbol{\mathscr{M}}}\|_F$, an analogous argument can be applied to obtain the same conclusion.

To finish, since $\textbf{u}^{(k)},\tilde{\textbf{u}}^{(k)}\in [\beta,\infty)^{I}$ we have $\langle \textbf{u}^{(k)},\tilde{\textbf{u}}^{(k)}\rangle \geq \beta^2 I$ and therefore
\[
\|\textbf{u}^{(n)}-\tilde{\textbf{u}}^{(n)}\|_2^2 \leq \frac{\epsilon}{\beta^{2(N-1)} I^{N-1}}.
\]
    
\end{proof}

\subsubsection{Proof of Theorem \ref{rankRthm}}
\begin{proof}[Proof of Theorem \ref{rankRthm}]

Consider a rank $R$ tensor $\boldsymbol{\mathscr{X}}\in\mathbb{C}^{I_1\times\cdots\times I_N}$ and write its CPD as
\begin{equation}
\label{CPX}
\boldsymbol{\mathscr{X}} = \sum_{r=1}^{R}a_r\boldsymbol{\mathscr{X}}_r
\end{equation}
where $\{a_r\}_{r=1}^{R}\subset \mathbb{C}$ and each $\boldsymbol{\mathscr{X}}_r$ is a rank-one unit norm tensor. It is important to note that we may assume the $\{\boldsymbol{\mathscr{X}}_r\}_{r=1}^{R}$ are linearly independent, otherwise we can obtain a CPD of lower rank. By the triangle inequality
\[
\|\boldsymbol{\mathscr{X}}\|_{*} \leq \sum_{r=1}^{R}|a_r|\|\boldsymbol{\mathscr{X}}_r\|_{*} = \sum_{r=1}^{R}|a_r|.
\]
The main component of the proof will show that $|a_r| \leq \|\boldsymbol{\mathscr{X}}\|_F$ for each $r\in \{1,\cdots,R\}$ and the result will follow.

Considering the CPD of $\boldsymbol{\mathscr{X}}$ above, WLOG assume that
\[
|a_R| = \max_{r}|a_r|.
\]
We now follow the Gram-Schmidt process to produce an orthogonal basis that spans the linear subspace spanned by $\{\boldsymbol{\mathscr{X}}_r\}_{r=1}^{R}$. The new orthogonal basis is defined as
\begin{align*}
    &\widetilde{\boldsymbol{\mathscr{X}}}_1 = \boldsymbol{\mathscr{X}}_1, \\
    &\widetilde{\boldsymbol{\mathscr{X}}}_2 = \boldsymbol{\mathscr{X}}_2 - \frac{\langle\boldsymbol{\mathscr{X}}_2, \widetilde{\boldsymbol{\mathscr{X}}}_1\rangle}{\big\|\widetilde{\boldsymbol{\mathscr{X}}}_{1}\big\|_F} \widetilde{\boldsymbol{\mathscr{X}}}_1, \\
    &\vdots \\
    &\widetilde{\boldsymbol{\mathscr{X}}}_R = \boldsymbol{\mathscr{X}}_R - \sum_{k=1}^{R-1}\frac{\langle\boldsymbol{\mathscr{X}}_R, \widetilde{\boldsymbol{\mathscr{X}}}_{k}\rangle}{\big\|\widetilde{\boldsymbol{\mathscr{X}}}_{k}\big\|_F} \widetilde{\boldsymbol{\mathscr{X}}}_{k}.
\end{align*}

Then, there exists some $\{b_r\}_{r=1}^{R}\subset\mathbb{C}$ such that
\begin{align*}
&\boldsymbol{\mathscr{X}} = \sum_{r=1}^{R}b_r\widetilde{\boldsymbol{\mathscr{X}}}_{r} \\ &= \sum_{r=1}^{R-1}b_r\widetilde{\boldsymbol{\mathscr{X}}}_{r} + b_R\left(\boldsymbol{\mathscr{X}}_R - \sum_{k=1}^{R-1}\frac{\langle\boldsymbol{\mathscr{X}}_R, \widetilde{\boldsymbol{\mathscr{X}}}_{k}\rangle}{\big\|\widetilde{\boldsymbol{\mathscr{X}}}_{k}\big\|_F} \widetilde{\boldsymbol{\mathscr{X}}}_{k}\right) \\
&= \sum_{r=1}^{R-1}c_r\widetilde{\boldsymbol{\mathscr{X}}}_{r} + b_R\boldsymbol{\mathscr{X}}_R.
\end{align*}
The second equality holds by using the definition of $\widetilde{\boldsymbol{\mathscr{X}}}_R$ in the Gram-Schmidt process and the last equality defines $\{c_r\}_{r=1}^{R-1}\subset\mathbb{C}$ by absorbing all the respective coefficients of $\{\widetilde{\boldsymbol{\mathscr{X}}}_{r}\}_{r=1}^{R-1}$.

By the Gram-Schmidt process, the span$\{\widetilde{\boldsymbol{\mathscr{X}}}_{r}\}_{r=1}^{R-1} =$ span$\{\boldsymbol{\mathscr{X}}_{r}\}_{r=1}^{R-1}$. Therefore, there exists coefficients $\{d_r\}_{r=1}^{R-1}\subset\mathbb{C}$ such that
\[
\boldsymbol{\mathscr{X}} = \sum_{r=1}^{R-1}d_r\boldsymbol{\mathscr{X}}_{r} + b_R\boldsymbol{\mathscr{X}}_R.
\]
Comparing this expansion to \eqref{CPX}, by linear independence we obtain that $b_R = a_R$ and $d_r = a_r$ for each $r\in \{1,\cdots,R-1\}$. We have shown
\[
\max_{r}|a_r| = |a_R| = |b_R| = \Bigg|\Bigg\langle
\boldsymbol{\mathscr{X}},\frac{\widetilde{\boldsymbol{\mathscr{X}}}_{R}}{\|\widetilde{\boldsymbol{\mathscr{X}}}_{R}\|_F}\Bigg\rangle\Bigg| \leq \|\boldsymbol{\mathscr{X}}\|_F.
\]

Our previous observations give that
\[
\|\boldsymbol{\mathscr{X}}\|_{*} \leq \sum_{r=1}^{R}|a_r| \leq R\|\boldsymbol{\mathscr{X}}\|_F .
\]
    
\end{proof}

\subsubsection{Proof of Lemma \ref{lem:minimaxpacking}}
\begin{proof}[Proof of Lemma \ref{lem:minimaxpacking}]
    We adapt the proof of \cite[Lemma~9]{lee2020tensor} and \cite[Lemma~8]{minimaxbin}. We first define the following set of matrices:
    $$\mathcal{C} = \left\{ \bM = (m_{ij}) \in \mathbb{R}^{I \times R} : m_{ij} \in \left\{ \tilde{\beta}, \tilde{\beta} + \epsilon (\tilde{\alpha} - \tilde{\beta}) \right\}, 1 \leq i \leq I, 1 \leq j \leq R\right\}.$$
    We next construct a set of block tensors. 
    First, for fixed $\bM \in \mathcal{C}$, define the matrix
    $$\bA(\bM) = \begin{bmatrix}
        \bM & \bM & \ldots \bM & \bM[:, 1:c_M]
    \end{bmatrix} \in \mathbb{R}^{I \times I},$$
    where the matrix $\bM$ repeats $\lfloor I / R \rfloor$ times and then add the first $c_M = (I - R \lfloor I / R \rfloor)$ columns of $\bM$.
    Here, $\lfloor \cdot \rfloor$ denotes the floor function (integer part).
    Then, consider the set 
    $$\mathcal{B} = \mathcal{B}(\mathcal{C}) = \left\{\bA(\bM) \circ \b1 \circ \b1 \circ \cdots \b1 : \bM \in \mathcal{C} \right\},$$
    where $\b1 \in \mathbb{R}^{I}$ is a vector of all $1$s. 

    By construction, every element in $\mathcal{C}$ has rank at most $R$ and hence the matrices $\bA$ also have rank at most $R$. 
    It follows that every tensor in $\mathcal{B}$ has CP rank at most $R$. 
    Moreover, the entries of tensors in $\mathcal{B}$ are either $\tilde{\beta}$ or $\tilde{\beta} + \epsilon (\tilde{\alpha} - \tilde{\beta})$.
    Noting that $0 < \tilde{\beta} < \tilde{\alpha}$, we have that 
    $$\tilde{\beta} + \epsilon (\tilde{\alpha} - \tilde{\beta}) \geq \tilde{\beta}.$$
    We also see that 
    $$\epsilon \tilde{\alpha} + (1 - \epsilon) \tilde{\beta} < \epsilon \tilde{\alpha} + (1 - \epsilon) \tilde{\alpha} = \tilde{\alpha},$$
    so that the entries of elements of $\mathcal{B}$ are bounded below by $\tilde{\beta}$ and above by $\tilde{\alpha}$ and $\mathcal{B} \subset \mathcal{S}_{R}(\tilde{\beta}, \tilde{\alpha})$. 
    
    We now note that the set $\mathcal{C}$ can be identified with the set of binary vectors $\{0, 1\}^{I R}$ after vectorization, subtracting $\tilde{\beta}$, and dividing by $\epsilon (\tilde{\alpha} - \tilde{\beta})$. 
    Since the transformed $\mathcal{C}$ contains the zero matrix, we may then apply Lemma \ref{lem:vg} and conclude that there is a subset $\widetilde{\mathcal{C}}$ of $\mathcal{C}$ such that after transformation 
    $\widetilde{\mathcal{C}}$ contains the zero matrix, $\left|\widetilde{\mathcal{C}}\right| \geq 2^{I R / 8}$, and for distinct elements $\bM^{(i)}, \bM^{(j)} \in \widetilde{\mathcal{C}}$, 
    $$\left\|\bM^{(i)} - \bM^{(j)}\right\|_0 \geq  \frac{I R}{8}.$$
    That is, the two matrices differ in at least $\frac{I R}{8}$ elements since $\|\cdot\|_0$ provides the number of non-zero entries. Noting that by construction, if two corresponding elements have a difference, their difference must be equal to $\epsilon (\tilde{\alpha} - \tilde{\beta})$, we conclude that 
\begin{equation}\label{eq:matrix_packing_spacing}
         \left\|\bM^{(i)} - \bM^{(j)}\right\|_F \geq \epsilon (\tilde{\alpha} - \tilde{\beta}) \sqrt{\frac{I_1 R}{8}}.
    \end{equation}
    
    As there is a one-to-one correspondence between $\mathcal{B}$ and $\mathcal{C}$, we may  identify the subset $\mathcal{F} \subseteq \mathcal{B}$ that corresponds to  $\widetilde{\mathcal{C}} \subseteq \mathcal{C}$.
    This subset $\mathcal{F}$  has the same cardinality as $\widetilde{\mathcal{C}}$.

    It remains to bound the difference between distinct elements of $\mathcal{F}$. 
    By construction, the elements of $\mathcal{F}$ have constant values when fixing all but the first two indices.
    It follows that for distinct elements $\boldsymbol{\mathscr{M}}^{(i)}, \boldsymbol{\mathscr{M}}^{(j)} \in \mathcal{F}$,
    \begin{align*}
        \left\| \boldsymbol{\mathscr{M}}^{(i)} - \boldsymbol{\mathscr{M}}^{(j)} \right\|_F^2 &\geq \frac{I R}{8} \epsilon^2 (\tilde{\alpha} - \tilde{\beta})^2 \left\lfloor \frac{I}{R}\right\rfloor \prod_{i = 3}^N I_i \\
        &\geq \frac{I R}{8} \epsilon^2 (\tilde{\alpha} - \tilde{\beta})^2  \frac{I}{ 2 R } \prod_{i = 3}^N I_i \\ 
        &= \frac{I^N}{16} \epsilon^2 (\tilde{\alpha} - \tilde{\beta})^2,
    \end{align*}
    where the first inequality follows by combining \eqref{eq:matrix_packing_spacing} with the structure of the tensors, the second by noting that $\left\lfloor I / R\right\rfloor \geq I / (2 R)$ since $\lfloor x \rfloor / x \geq 1/2$ for $x \geq 1$ (and $I / R \geq 1$), and the third by simplification. 

    To bound the difference from above, we note that the difference in any corresponding entries of $\boldsymbol{\mathscr{M}}^{(i)}$ and $\boldsymbol{\mathscr{M}}^{(j)}$ is either $0$ or $\epsilon (\tilde{\alpha} - \tilde{\beta})$. 
    There are $I_T$ total entries in the difference tensor, so that we immediately conclude that 
    $$\left\|\boldsymbol{\mathscr{M}}^{(i)} - \boldsymbol{\mathscr{M}}^{(j)} \right\|_F^2 \leq \epsilon^2 (\tilde{\alpha} - \tilde{\beta})^2 I^N.$$
\end{proof}

\section{Conclusions and Future Work}
\label{conclusion}

This study provides a theoretical foundation for assessing the reliability of tensor CP-based multiway analysis through a non-asymptotic interpretation of the Cram\'{e}r--Rao lower bound. By focusing on a rank-one Poisson tensor model, we demonstrate that constrained maximum likelihood estimators can achieve near-optimal decomposition factor estimation, approaching the CRLB up to constant and logarithmic factors. Our relaxed CRLB framework thus serves as a valuable benchmarking tool for evaluating estimator performance in structured, high-dimensional settings.

For higher-rank CP decompositions, the limitations of current estimation methods become more apparent. Our numerical experiments reveal a growing discrepancy between estimator variance and the CRLB as tensor rank increases. This divergence underscores the impact of degeneracy and non-identifiability that plagues high-rank tensor decompositions. Nonetheless, we find that CP-based inference retains near-minimax optimality in estimating the full parameter tensor, with only modest rank-dependent suboptimality.

Future work could extend this framework to broader classes of tensor models and noise distributions—such as Gaussian or overdispersed count data—to evaluate the generality of the near-efficiency results demonstrated for the Poisson rank-one case. Another important direction is the development of computationally tractable algorithms that approach the CRLB in higher-rank settings, where achieving near-efficiency remains a critical open challenge. In this work, we applied our recent results from \cite{PCPFIMpaper}, which provide closed-form expressions for the Fisher Information Matrix (FIM) for arbitrary tensor ranks. Further exploration of these explicit FIMs could reveal how to structure CP estimators to yield unique minimum variance solutions and ensure the FIM’s non-singularity. Understanding the null-space properties of these FIMs may ultimately offer a principled path toward designing more efficient tensor decomposition estimators.

\bibliographystyle{unsrt}
\bibliography{references}

\begin{thebibliography}{10}

\bibitem{multiwaybook}
Age~K. Smilde, Rasmus Bro, and Paul Geladi.
\newblock {\em Multi-Way Analysis with Applications in the Chemical Sciences}.
\newblock J. Wiley, 2004.

\bibitem{tensorsuvey}
Tamara~G. Kolda and Brett~W. Bader.
\newblock Tensor decompositions and applications.
\newblock {\em SIAM Review}, 51(3):455--500, 2009.

\bibitem{NP}
Christopher~J. Hillar and Lek-Heng Lim.
\newblock Most tensor problems are {NP}-hard.
\newblock {\em J. ACM}, 60(6), November 2013.

\bibitem{CRLBbook}
Yonina~C Eldar et~al.
\newblock {\em Rethinking Biased Estimation: Improving {Maximum Likelihood} and
  the {Cram{\'e}r--Rao} Bound}, volume~1.
\newblock Now Publishers, Inc., 2008.

\bibitem{app1}
Vladimir Sovljanski and Mario Paolone.
\newblock On the use of {Cram{\'e}r--Rao} lower bound for least-variance
  circuit parameters identification of li-ion cells.
\newblock {\em Journal of Energy Storage}, 94:112223, 2024.

\bibitem{app2}
Daniel~E. Clark and {\v S}pela Ivekovi{\v c}.
\newblock The {Cram{\'e}r--Rao} lower bound for 3-d state estimation from
  rectified stereo cameras.
\newblock In {\em 2010 13th International Conference on Information Fusion},
  pages 1--8, 2010.

\bibitem{app3}
Dorian Bouchet, Valentina Krachmalnicoff, and Ignacio Izeddin.
\newblock {Cram\'er--Rao} analysis of lifetime estimations in time-resolved
  fluorescence microscopy.
\newblock {\em Opt. Express}, 27(15):21239--21252, 2019.

\bibitem{app4}
Dirk Robinson and Peyman Milanfar.
\newblock Fundamental performance limits in image registration.
\newblock {\em IEEE Transactions on Image Processing}, 13(9):1185--99, 2004.

\bibitem{Akahira1981}
Masafumi Akahira and Kei Takeuchi.
\newblock {\em Asymptotic Efficiency}, pages 54--80.
\newblock Springer New York, New York, NY, 1981.

\bibitem{279617}
Zhongwei Li, Qingfu Zeng, Huanen Xu, P.M. Grant, C.F.N. Cowan, and B.~Mulgrew.
\newblock The asymptotic {Cram{\'e}r--Rao} lower bound and {Maximum Likelihood}
  estimation of parameters of an exponentially damped sinusoid from noisy
  measurements.
\newblock In {\em [1992] Proceedings of the IEEE International Symposium on
  Industrial Electronics}, pages 64--68 vol.1, 1992.

\bibitem{CRLBgap}
Arie Yeredor, Amir Weiss, and Anthony~J. Weiss.
\newblock High-order analysis of the efficiency gap for {Maximum Likelihood}
  estimation in nonlinear gaussian models.
\newblock {\em IEEE Transactions on Signal Processing}, 66(18):4782--4795,
  2018.

\bibitem{NEURIPS2022}
Shivam Gupta, Jasper Lee, Eric Price, and Paul Valiant.
\newblock Finite-sample {Maximum Likelihood} estimation of location.
\newblock In S.~Koyejo, S.~Mohamed, A.~Agarwal, D.~Belgrave, K.~Cho, and A.~Oh,
  editors, {\em Advances in Neural Information Processing Systems}, volume~35,
  pages 30139--30149. Curran Associates, Inc., 2022.

\bibitem{MIAO20101305}
Yu~Miao.
\newblock Concentration inequality of the {Maximum Likelihood Estimator}.
\newblock {\em Applied Mathematics Letters}, 23(10):1305--1309, 2010.

\bibitem{9719860}
Jasper~C.H. Lee and Paul Valiant.
\newblock Optimal {Sub-Gaussian} mean estimation in $\mathbb{R}$.
\newblock In {\em 2021 IEEE 62nd Annual Symposium on Foundations of Computer
  Science (FOCS)}, pages 672--683, 2022.

\bibitem{27fba59e-c294-3d55-b213-446841721797}
Luc Devroye, Matthieu Lerasle, Gabor Lugosi, and Roberto~I. Oliveira.
\newblock Sub-{Gaussian} mean estimators.
\newblock {\em The Annals of Statistics}, 44(6):2695--2725, 2016.

\bibitem{Spokoiny2011ParametricEF}
Vladimir~G. Spokoiny.
\newblock Parametric estimation: Finite-sample theory.
\newblock {\em Annals of Statistics}, 40:2877--2909, 2011.

\bibitem{Catoni2012}
Olivier Catoni.
\newblock Challenging the empirical mean and empirical variance: A deviation
  study.
\newblock {\em Annales de l'I.H.P. Probabilités et Statistiques},
  48(4):1148--1185, 2012.

\bibitem{MCI}
Joel~A. Tropp.
\newblock An introduction to matrix concentration inequalities.
\newblock {\em Foundations and Trends in Machine Learning}, 8(1-2):1--230,
  2015.

\bibitem{PoissonMC}
Yang Cao and Yao Xie.
\newblock Poisson matrix recovery and completion.
\newblock {\em IEEE Transactions on Signal Processing}, 64(6):1609--1620, 2016.

\bibitem{ZTP}
Oscar~F L{\'o}pez, Daniel~M Dunlavy, and Richard~B Lehoucq.
\newblock Zero-truncated {Poisson} regression for sparse multiway count data
  corrupted by false zeros.
\newblock {\em Information and Inference: A Journal of the IMA},
  12(3):1573--1611, 05 2023.

\bibitem{minimaxbin}
Miaoyan Wang and Lexin Li.
\newblock Learning from binary multiway data: Probabilistic tensor
  decomposition and its statistical optimality.
\newblock {\em Journal of Machine Learning Research}, 21(154):1--38, 2020.

\bibitem{montanari}
Andrea Montanari and Nike Sun.
\newblock Spectral algorithms for tensor completion.
\newblock {\em Communications on Pure and Applied Mathematics},
  71(11):2381--2425, 2018.

\bibitem{incoherence}
Ming Yuan and Cun-Hui Zhang.
\newblock Incoherent tensor norms and their applications in higher order tensor
  completion.
\newblock {\em IEEE Transactions on Information Theory}, 63(10):6753--6766,
  2017.

\bibitem{navid}
Navid Ghadermarzy, Yaniv Plan, and {\"O}zg{\"u}r Yilmaz.
\newblock Near-optimal sample complexity for convex tensor completion.
\newblock {\em Information and Inference: A Journal of the IMA}, 8(3):577--619,
  11 2018.

\bibitem{navid2}
N.~Ghadermarzy.
\newblock {\em Near-Optimal Sample Complexity for Noisy or 1-Bit Tensor
  Completion}.
\newblock PhD thesis, University of British Columbia, 2018.

\bibitem{navid3}
N.~Ghadermarzy, Y.~Plan, and {\"O}.~Yilmaz.
\newblock Learning tensors from partial binary measurements.
\newblock {\em IEEE Transactions on Signal Processing}, 67(1):29--40, 2019.

\bibitem{harris2023spectral}
Kameron~Decker Harris, Oscar L{\'o}pez, Angus Read, and Yizhe Zhu.
\newblock Spectral gap--based deterministic tensor completion.
\newblock In {\em 2023 International Conference on Sampling Theory and
  Applications (SampTA)}, pages 1--6, 2023.

\bibitem{Anru1}
Rungang Han, Rebecca Willett, and Anru~R. Zhang.
\newblock An optimal statistical and computational framework for generalized
  tensor estimation.
\newblock {\em The Annals of Statistics}, 50(1):1--29, 2022.

\bibitem{Anru2}
Anru Zhang.
\newblock Cross: Efficient low-rank tensor completion.
\newblock {\em The Annals of Statistics}, 47(2):936--964, 2019.

\bibitem{IHT}
Holger Rauhut, Reinhold Schneider, and {\v Z}eljka Stojanac.
\newblock Low-rank tensor recovery via iterative hard thresholding.
\newblock {\em Linear Algebra and its Applications}, 523:220--262, 2017.

\bibitem{Bader2021TensorToolbox}
Brett~W. Bader, Tamara~G. Kolda, Daniel~M. Dunlavy, et~al.
\newblock Tensor toolbox for {MATLAB}, version 3.2.1.
\newblock \url{https://gitlab.com/tensors/tensor_toolbox/-/tree/v3.2.1}, April
  2021.
\newblock Available online.

\bibitem{PCPFIMpaper}
Carlos Llosa-Vite, Daniel~M. Dunlavy, Richard Lehoucq, Oscar L{\'o}pez, and
  Arvind Prasadan.
\newblock A latent-variable formulation of the {Poisson} canonical polyadic
  tensor model: {Maximum Likelihood} estimation and {Fisher Information}, 2025.

\bibitem{pseudo}
Hans-Joachim Werner.
\newblock On the matrix monotonicity of generalized inversion.
\newblock {\em Linear Algebra and its Applications}, 27:141--145, 1979.

\bibitem{PMC}
Yang Cao and Yao Xie.
\newblock Poisson matrix recovery and completion.
\newblock {\em IEEE Transactions on Signal Processing}, 64(6):1609--1620, 2016.

\bibitem{tripathy2}
J~Gross and G~Trenkler.
\newblock C467. complementing remarks on a matrix extension of the
  {Cauchy--Schwarz} inequality.
\newblock {\em Journal of Statistical Computation and Simulation},
  72(1):26--29, 2002.

\bibitem{tripathi99}
Gautam Tripathi.
\newblock A matrix extension of the {Cauchy--Schwarz} inequality.
\newblock {\em Economics Letters}, 63(1):1--3, April 1999.

\bibitem{wainwright2019high}
Martin~J Wainwright.
\newblock {\em High-Dimensional Statistics: A Non-Asymptotic Viewpoint},
  volume~48.
\newblock Cambridge University Press, 2019.

\bibitem{KLbound}
Xin Jiang, Garvesh Raskutti, and Rebecca Willett.
\newblock Minimax optimal rates for {Poisson} inverse problems with physical
  constraints.
\newblock {\em IEEE Transactions on Information Theory}, 61(8):4458--4474,
  2015.

\bibitem{NNbound}
Xu~Kong.
\newblock New estimations on the upper bounds for the nuclear norm of a tensor.
\newblock {\em Journal of Inequalities and Applications}, 282, 2018.

\bibitem{lee2020tensor}
Chanwoo Lee and Miaoyan Wang.
\newblock Tensor denoising and completion based on ordinal observations.
\newblock In {\em International Conference on Machine Learning}, pages
  5778--5788. PMLR, 2020.

\bibitem{jiang2015minimax}
Xin Jiang, Garvesh Raskutti, and Rebecca Willett.
\newblock Minimax optimal rates for {Poisson} inverse problems with physical
  constraints.
\newblock {\em IEEE Transactions on Information Theory}, 61(8):4458--4474,
  2015.

\end{thebibliography}

\end{document}